\title{Twisted Alexander polynomials of hyperbolic knots}
\author{Nathan M. Dunfield}
\address{University of Illinois, Urbana, IL 61801, USA}
\email{nathan@dunfield.info}
\urladdr{http://dunfield.info}
\author{Stefan Friedl}
\address{Mathematisches Institut\\ Universit\"at zu K\"oln\\   Germany}
\email{sfriedl@gmail.com}
\author{Nicholas Jackson}
\address{University of Warwick, Coventry, UK}
\email{nicholas.jackson@warwick.ac.uk}
\def\ss{\mathfrak{s}}
\def\gl{\mbox{GL}} \def\Q{\Bbb{Q}} \def\F{\Bbb{F}} \def\Z{\Bbb{Z}} \def\R{\Bbb{R}} \def\C{\Bbb{C}}
\def\N{\Bbb{N}}  \def\l{\lambda} \def\ll{\langle} \def\rr{\rangle}
 \def\a{\alpha} \def\g{\gamma}  \def\bp{\begin{pmatrix}}
\def\sm{\setminus} \def\ep{\end{pmatrix}} \def\bn{\begin{enumerate}} \def\Hom{\mbox{Hom}}
   \def\en{\end{enumerate}}
\def\ba{\begin{array}} \def\ea{\end{array}}  
  \def\s{\sigma} \def\a{\alpha} \def\b{\beta} 
 \def\Aut{\mbox{Aut}}  \def\sign{\mbox{sign}}
\def\be{\begin{equation}} \def\ee{\end{equation}} \def\tr{\mbox{tr}}
 \def\hom{\mbox{Hom}}  
    \def\eps{\epsilon}
 \def\dim{\operatorname{dim}}
     \def\GL{\mathrm{GL}}      
   \def\K{\Bbb{K}} 
\def\w{\omega}   
  \def\gen{\mbox{genus}} \def\rt{R[t^{\pm
1}]}  \def\fr12{\frac{1}{2}} \def\z12{\Z[\fr12]} \def\rnt{R[t^{\pm 1}]^n}
\def\sltwoc{\mathfrak{sl}(2,\C)}
\def\tpm {[t^{\pm 1}]}
\def\ol{\overline}
\def\eul{\operatorname{Eul}}
\def\spinc{\operatorname{Spin}^c}
\def\ct{\C\tpm}
\def\T{\mathcal{T}}
\def\tkt{\T_K(t)}
\def\tk{\T_K}
\let\deg\undefined
\DeclareMathOperator{\deg}{deg}
\newcommand{\OO}{{\mathcal O}}
\newcommand{\interior}{\mathrm{int}}
\renewcommand{\PSL}{{\mathrm{PSL}(2, \C)}}
\renewcommand{\SL}{{\mathrm{SL}(2, \C)}}
\newcommand{\SLF}{{\mathrm{SL}(2, \F)}}
\newcommand{\SLFK}{{\mathrm{SL}(2, \F_K)}}
\newcommand{\SLOK}{{\mathrm{SL}(2, \OO_\K)}}
\newcommand{\alphabar}{\kernoverline{2}{\alpha}{2}}
\newcommand{\Tsym}[1]{\left(t^{#1} + t^{-#1}\right)}
\newcommand{\adj}{\mathit{adj}}
\DeclareMathOperator{\genus}{genus}
\newcommand{\tadjkt}{\T_K^{\adj}(t)}
\newcommand{\tadjk}{\T_K^{\adj}}
\newcommand{\Xtilde}{{\widetilde{X}}}
\newcommand{\myupbeta}{^{}{\hspace{-0.45em}\raisebox{0.95ex}{\scriptsize $\beta$}}}
\newcommand{\smalleqsize}{\scriptstyle}
\newcommand{\messylinestart}{& \ \ \smalleqsize}
\newcommand{\mvph}{\vphantom{\theta^7}}
\newcommand{\KT}{\mathit{KT}}
\newcommand{\ctil}{\tilde{c}}
\newcommand{\ctilprime}{\ctil'}
\newcommand{\tkx}{{\T_K^{\vphantom{x}} \hspace{-0.5em}\raisebox{0.8ex}{$\scriptstyle X_0$}}}
\newcommand{\tkxt}{{\tkx(t)}}
\newcommand{\cx}{{\C[X_0]}}
\newcommand{\cR}{{\C[R_0]}}
\newcommand{\cRf}{{\C(R_0)}}
\newcommand{\cxt}{{\cx\tpm}}
\newcommand{\rhotaut}{{\rho_{\mathrm{taut}}}}
\newcommand{\SLCR}{\mathrm{SL}\big(2, \cR\big)}
\newcommand{\tktaut}{{\T_K^{\mathrm{taut}}}}
\newcommand{\POne}{ P^1(\C) }
\newcommand{\FreeGroup}[1]{\left\langle#1\right\rangle}
\newcommand{\Xhat}{{ \kernhat{4}{X}{0}_0 }}
\newcommand{\ZHS}{$\Z\mathrm{HS}$}
\newcommand{\QHS}{$\Q\mathrm{HS}$}
\newcommand{\ZtwoHS}{{$\Z_2\mathrm{HS}$}}
\newcommand{\nab}{^{\mathit{irr}}}
\def\tr{\mathrm{tr}}
\newcommand{\abar}{\kernoverline{3}{a}{0}}
\newcommand{\xbar}{\kernoverline{3}{x}{0}}
\newcommand{\Abar}{\kernoverline{4}{A}{0}}
\newcommand{\gbar}{\kernoverline{3}{g}{3}}
\newcommand{\ith}{$i^{\mathit{th}}$}
\newcommand{\mth}{$m^{\mathit{th}}$}
\newcommand{\rootsofunity}[1]{{\boldsymbol \mu}_{#1}}
\newcommand{\produnity}{\prod_{\zeta \in \rootsofunity{m}}}
\newcommand{\sumunity}{\bigoplus_{\zeta \in \rootsofunity{m}}}
\newcommand{\Zm}{{\Z_m}}
\newcommand{\Ztwo}{{\Z_2}}
\newcommand{\oltk}{{{\kernoverline{2}{\T_K}{9}}}}
\newcommand{\leftFn}{\vphantom{\F}_\beta V}
\newcommand{\leftFna}{\vphantom{\F}_{\beta^*} V}
\newcommand{\rightFn}{V_\beta}
\newcommand{\op}{^{\mathit{op}}}
\theoremstyle{plain}
\newtheorem*{maintheorem}{Theorem~\ref{thm:deft}}
\newtheorem*{chartheorem}{Theorem~\ref{thm:chartorsion}}
\newtheorem*{charcorollary}{Corollary~\ref{cor:char}}
\newtheorem*{charconjecture}{Conjecture~\ref{conj:char}}
\begin{document}

\begin{abstract}
  We study a twisted Alexander polynomial naturally associated to a
  hyperbolic knot in an integer homology 3-sphere via a lift of the
  holonomy representation to $\SL$.  It is an unambiguous symmetric
  Laurent polynomial whose coefficients lie in the trace field of the
  knot.  It contains information about genus, fibering, and
  chirality, and moreover is powerful enough to sometimes detect
  mutation.

  We calculated this invariant numerically for all $313{,}209$
  hyperbolic knots in $S^3$ with at most 15 crossings, and found that
  in all cases it gave a sharp bound on the genus of the knot and
  determined both fibering and chirality.  

  We also study how such twisted Alexander polynomials vary as one
  moves around in an irreducible component $X_0$ of the
  $\SL$-character variety of the knot group.  We show how to
  understand all of these polynomials at once in terms of a polynomial
  whose coefficients lie in the function field of $X_0$.  We use this
  to help explain some of the patterns observed for knots in $S^3$,
  and explore a potential relationship between this universal
  polynomial and the Culler-Shalen theory of surfaces associated to
  ideal points.
\end{abstract}
\maketitle

\setcounter{tocdepth}{2}
\tableofcontents

\section{Introduction}

A fundamental invariant of a knot $K$ in an integral homology
\3-sphere $Y$ is its Alexander polynomial $\Delta_K$.  While
$\Delta_K$ contains information about genus and fibering, it is
determined by the maximal metabelian quotient of the fundamental group
of the complement $M = Y \setminus K$, and so this topological
information has clear limits.  In 1990, Lin introduced the
\emph{twisted Alexander polynomial} associated to $K$ and a
representation $\a \maps \pi_1(M) \to \GL(n, \F)$, where $\F$ is a
field.  These twisted Alexander polynomials also contain information
about genus and fibering and have been studied by many authors (see
the survey \cite{FV10}).  Much of this work has focused on those $\a$
which factor through a finite quotient of $\pi_1(M)$, which is closely
related to studying the ordinary Alexander polynomial in finite covers
of $M$.  In contrast, we study here a twisted Alexander polynomial
associated to a representation coming from hyperbolic geometry.

Suppose that $K$ is \emph{hyperbolic}, i.e.~the complement $M$ has a
complete hyperbolic metric of finite volume, and consider the
associated holonomy representation $\alphabar \maps \pi_1(M) \to
\Isom^+(\H^3)$.  Since $\Isom^+(\H^3) \cong \PSL$, there are two
simple ways to get a linear representation so we can consider the
twisted Alexander polynomial: compose $\alphabar$ with the adjoint
representation to get $\pi_1(M) \to \Aut(\mathfrak{sl}_2 \C) \leq
\mathrm{SL}(3, \C)$, or alternatively lift $\alphabar$ to a representation
$\pi_1(M) \to \SL$.  The former approach is the focus of the recent
paper of Dubois and Yamaguchi \cite{DY09}; the latter method is what
we use here to define an invariant $\tkt \in \C\tpm$ called the
\emph{hyperbolic torsion polynomial}.

The hyperbolic torsion polynomial $\tk$ is surprisingly little
studied.  To our knowledge it has only previously been looked at for
2-bridge knots in \cite{Mo08, KM10, HM08, SW09c}.  Here we show that
it contains a great deal of topological information.  In fact, we show
that $\tk$ determines genus and fibering for all $313{,}209$
hyperbolic knots in $S^3$ with at most 15 crossings, and we conjecture
this is the case for all knots in $S^3$.

\subsection{Basic properties}

More broadly, we consider here knots in $\Ztwo$-homology \3-spheres.  The
ambient manifold $Y$ containing the knot $K$ will always be oriented,
not just orientable, and $\tk$ depends on that orientation. Following
Turaev, we formulate $\tk$ as a Reidemeister-Milnor torsion as this
reduces its ambiguity; in that setting, we work with the compact core
of $M$, namely the knot exterior $X \assign Y \sm \mathrm{int}(N(K))$
(see Section~\ref{section:twisted} for details). By fixing certain
conventions for lifting the holonomy representation $\alphabar \maps
\pi_1(X) \to \PSL$ to $\a \maps \pi_1(X) \to \SL$, we construct in
Section~\ref{section:hyp} a well-defined symmetric polynomial $\tk \in
\C\tpm$.  The first theorem summarizes its basic properties:
\begin{theorem}\label{thm:deft}
 Let $K$ be a hyperbolic knot in an oriented $\Ztwo$-homology \3-sphere. Then
  $\tk$ has the following properties:
  \begin{enumerate}
  \item
   $\T_K$ is an unambiguous element of \hspace{0.01em} $\ct$ which
    satisfies $\tk(t^{-1}) = \tkt$. It does not depend on an
    orientation of $K$.
  \item The coefficients of $\tk$ lie in the trace field of $K$. If
    $K$ has integral traces, the coefficients of $\tk$ are algebraic
    integers.
  \item  \label{item:nonvanish-intro}
    $\tk(\xi)$ is non-zero for any root of unity $\xi$. In particular,
    $\tk \neq 0$.
  \item
    If $K^*$ denotes the mirror image of $K$, then
    $\T_{K^*}(t)=\oltk(t)$, where the coefficients of the latter
    polynomial are the complex conjugates of those of $\T_K$.
  \item \label{item:amph-intro}
    If $K$ is amphichiral then $\T_K$ is a real polynomial.
  \item The values $\tk(1)$ and $\tk(-1)$ are mutation invariant.
\end{enumerate}
\end{theorem}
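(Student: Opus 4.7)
The plan is to verify each of the six properties in turn, leaning on general features of Reidemeister--Milnor torsion combined with specific inputs from hyperbolic geometry.

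Parts (1), (2), (4), and (5) I expect to be the most routine. For (1), unambiguity of $\T_K$ as an element of $\C\tpm$ (rather than only up to $\pm t^k$) should follow from Turaev's sign-refined formulation of the torsion, using canonical choices of homology orientation and Euler structure on $X$; these are available because $X$ is oriented and its fundamental group comes equipped with the distinguished lift $\alpha$. The symmetry $\T_K(t^{-1}) = \T_K(t)$ is Milnor--Franz duality for torsion on an odd-dimensional manifold with toroidal boundary, and independence from the orientation of $K$ then drops out because reversing $K$ substitutes $t \mapsto t^{-1}$ and the polynomial is symmetric. Complex conjugation of $\C$ reverses the orientation of $\H^3$, so the holonomy of the mirror $K^*$ is $\overline{\alpha}$; since the torsion is rational in the matrix entries of the representation, this yields $\T_{K^*}(t) = \overline{\T_K}(t)$, and hence $\T_K = \overline{\T_K}$ in the amphichiral case, proving (4) and (5). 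For (2), I would argue that although individual matrix entries of $\alpha$ may lie in a quadratic extension of the trace field $F$, the ratios and determinants that appear in the torsion formula depend only on $\mathrm{Ad}$-invariants of $\alpha$ and hence lie in $F$; the integral-trace case then follows because $\alpha$ can be conjugated to take values in $\mathrm{SL}(2, \OO_F)$.

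For (3), non-vanishing of $\T_K(\xi)$ at a root of unity $\xi$ is equivalent, via the standard link between torsion and twisted homology, to acyclicity of the specialised chain complex $C_*(X; \C^2_{\xi \cdot \alpha})$, on which $g \in \pi_1(X)$ acts by $\xi^{\phi(g)} \alpha(g)$. I would deduce this acyclicity from a vanishing theorem of the Menal-Ferrer--Porti type: for a discrete faithful $\SL$ representation of a cusped hyperbolic $3$-manifold, twisting by a finite-order abelian character keeps the boundary restrictions non-trivial enough that the twisted chain complex remains acyclic. The case $\xi = 1$ is Weil rigidity for the complete hyperbolic structure; the generalisation to arbitrary roots of unity is the substantive input.

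Part (6) is where I expect the genuine work to lie. The strategy is to adapt the Cooper--Ruberman argument for mutation invariance: a Conway-sphere mutation $\tau$ preserves the $\SL$ character of the discrete faithful holonomy of $K$, because $\tau$ conjugates the two sides of the Conway sphere by an involution that itself lies in the image of $\alpha$ up to sign. At $t = 1$, the polynomial $\tk(1)$ is literally a polynomial function on the $\SL$ character variety, so mutation invariance is immediate from character invariance. At $t = -1$ there is an additional $\Ztwo$-twist through $\phi \bmod 2$, but the mutation involution acts compatibly with the corresponding double cover, and the argument transports to that setting. The main obstacle will be pinning down that the Euler/spin structure chosen in (1) behaves compatibly under $\tau$, so that the sign-refinement in the torsion does not obstruct the equality $\T_{K^\tau}(\pm 1) = \T_K(\pm 1)$.
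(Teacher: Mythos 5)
Your outline for parts (1), (4), and (5) matches the paper's route: unambiguity comes from the Turaev sign-refinement together with the $t^{\phi(c_1(\ss))}$ normalization of Theorem~\ref{thm:Tsym}, symmetry is the $\SLF$-duality of \cite{FKK11} (not Milnor--Franz duality per se, but the same in spirit), and the mirror/amphichiral statements follow from complex conjugation of the holonomy exactly as you say. Two gaps are worth flagging, though.

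For part (2), the paper uses something cleaner than the ``$\mathrm{Ad}$-invariants'' argument: since $M$ has a cusp, \cite[Lemma~2.6]{NeumannReid1992} lets one conjugate $\alpha$ so that its image lies literally in $\SLFK$, so the torsion coefficients are automatically in $\F_K$ --- no appeal to invariance of the formula is needed. For the integral-trace half you cannot quite conjugate into $\mathrm{SL}(2,\OO_{\F_K})$; the correct statement (\cite[Theorem~5.2.4]{MR03}) lands you in $\SLOK$ for a possibly larger number field $\K$. More importantly, your sketch omits the actual reason the coefficients are integral: after applying Proposition~\ref{prop:fox} with $\mu$ as the preferred generator, the denominator $\det\big((\a\otimes\phi)(\mu^{-1}-1)\big)=(t^k-1)^2$ is a \emph{monic} element of $\Z\tpm$, and one then runs an induction on coefficients to conclude that $\tk \in \OO_K\tpm$. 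Without this step, knowing $\tk\cdot(t^k-1)^2\in\OO_K\tpm$ does not obviously give integrality of $\tk$ itself.

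For part (3), your proposal to apply a Menal-Ferrer--Porti vanishing theorem directly to the twist $\alpha\otimes\lambda_\xi$ on $X$ runs into the issue that $\alpha\otimes\lambda_\xi$ is no longer the (or a) discrete faithful representation of $\pi_1(X)$, so \cite[Theorem~0.4]{MP09} does not apply to it as stated. The paper instead applies MP09 to the discrete faithful restriction $\alpha_m$ on the finite cyclic cover $X_m$, where it does apply, and then transports the conclusion to the base via Theorem~\ref{thm:tauxm}: $\prod_{\zeta^m=1}\T_K(\zeta)=\tau(X_m,\alpha_m)$. That identity (Lemmas~\ref{lem:tobase} and~\ref{lem:toreval}) is exactly the Shapiro-type argument you are implicitly invoking, but it is the load-bearing step and should appear explicitly. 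Also, the appeal to ``Weil rigidity'' at $\xi=1$ is off target: local rigidity concerns $H^1$ with adjoint coefficients, and in the cusped case that group is nonzero; what is actually needed is the Raghunathan-type vanishing of $H_*^\alpha(X;\C^2)$ for the two-dimensional representation, which is the content of MP09.

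Part (6) is where the comparison is starkest: the paper does not give a proof at all, but simply cites \cite{MenalFerrerPorti2011}. Your proposed Cooper--Ruberman-style argument is the right circle of ideas, but as written it has a flaw: after mutation $\pi_1(X_K)$ and $\pi_1(X_{K^\tau})$ are genuinely different groups, so $\tk(1)$ is \emph{not} a single regular function on one character variety evaluated at two characters. What is actually true (and is what Menal-Ferrer--Porti and, earlier, Ruberman exploit) is that the Conway sphere cuts both exteriors into the same two pieces, the hyperelliptic involution of the four-punctured sphere lifts to an element of $\pm\mathrm{SL}(2,\C)$ conjugating the boundary restrictions, and one must run a Mayer--Vietoris gluing argument for the torsions of the pieces while tracking the Euler structure. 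If you wish to include a self-contained proof rather than a citation, that gluing argument is the missing content, and the claim about ``a polynomial function on the character variety'' should be replaced with it.
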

Moreover, $\tk$ both determines and is determined by a sequence of
$\C$-valued torsions of finite cyclic covers of $X$.  Specifically,
let $X_m$ be the $m$--fold cyclic cover coming from the free
abelianization of $H_1(X;\Z)$.  For the restriction $\a_m$ of $\a$ to
$\pi_1(X_m)$, we consider the corresponding $\C$-valued torsion
$\tau(X_m,\a_m)$.  A standard argument shows that $\tk$ determines all
the $\tau(X_m, \a_m)$ (see Theorem~\ref{thm:tauxm}).  More interestingly,
the converse holds: the torsions $\tau(X_m,\a_m)$ determine
$\tk$ (see Theorem~\ref{prop:tktdetermined}).  This latter result
follows from work of David Fried \cite{Fr88} (see also Hillar
\cite{Hillar2005a}) and that of Menal--Ferrer and Porti \cite{MP09}.

\begin{remark}
  Conjecturally, the torsions $\tau(X_m,\a_m)$ can be expressed as
  analytic torsions and as Ruelle zeta functions defined using the
  lengths of prime geodesics.   See Ray--Singer\cite{RS71},
  Cheeger \cite{Che77,Che79}, M\"uller \cite{Mu78} and Park
  \cite{Par09} for details and background material. We hope that this
  point of view will be helpful in the further study of $\tk$.

  The torsions $\tau(X_m,\a_m)$ are
  interesting invariants in their own right. For example, Menal--Ferrer
  and Porti \cite{MP09} showed that $\tau(X_m,\a_m)$ is non--zero for
  any $m$.  Furthermore, Porti \cite{Po09} showed that
  $\tau(X_1,\a_1) = \tau(X, \a) =\tk(1)$ is not obviously related to hyperbolic
  volume. More precisely, using a variation on
  \cite[Th\'eor\`eme~4.17]{Po97} one can show that there exists a
  sequence of knots $K_n$ whose volumes converge to a positive real
  number, but the numbers $\T_{K_n}(1)$ converge to zero. See
  \cite{Po97,MP09, MenalFerrerPorti2011} for further results.
\end{remark}

\subsection{Topological information: genus and fibering}

We define $x(K)$ to be the Thurston norm of a generator of
$H_2(X, \partial X; \Z) \cong \Z$; if $K$ is null-homologous in $Y$,
then $x(K)=2 \cdot \gen(K)-1$, where $\gen(K)$ is the least genus of all
Seifert surfaces bounding $K$.  Also, we say that $K$ is
\emph{fibered} if $X$ fibers over the circle.

A key property of the ordinary Alexander polynomial $\Delta_K$ is that
\[
x(K) \geq \deg(\Delta_K) - 1.
\]
When $K$ is fibered, this is an equality and moreover the lead
coefficient of $\Delta_K$ is $1$ (here, we normalize $\Delta_K$ so
that the lead coefficient is positive).  As with any twisted
Alexander/torsion polynomial, we get similar information out of $\tk$:
\begin{theorem}\label{thm:top}
  Let $K$ be a knot in an oriented $\Ztwo$-homology sphere. Then
  \[
  x(K) \geq \frac{1}{2} \deg(\tk).
  \]
  If $K$ is fibered, this is an equality and $\tk$ is monic,
  i.e.~has lead coefficient 1.
\end{theorem}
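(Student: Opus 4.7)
The plan is to deduce Theorem~\ref{thm:top} from the general machinery relating twisted Alexander polynomials to the Thurston norm and to fibering, applied to the two-dimensional representation $\alpha \colon \pi_1(X) \to \SL$ used to define $\tk$. The two key inputs are the twisted Thurston-norm bound of Friedl--Kim (building on McMullen's work), which for any representation into $\GL(n,\F)$ bounds $n \cdot x(K)$ from below by the degree of the associated twisted Alexander polynomial, and the Goda--Kitano--Morifuji theorem, which in the fibered case upgrades this inequality to an equality and shows monicity.

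First I would reconcile our Reidemeister--Milnor normalization of $\tk$ with the normalization used in these earlier bounds. Since $\tk$ is a torsion of the twisted chain complex of $X$ with coefficients in $\C^2 \otimes \ct$ via $\alpha \otimes \phi$, it agrees with the quotient $\Delta_1^{\alpha}/\Delta_0^{\alpha}$ of the usual twisted Alexander polynomials, up to multiplication by a unit. The zeroth piece $\Delta_0^{\alpha}$ records the boundary torus and has degree at most $2$. Property~\ref{item:nonvanish-intro} of Theorem~\ref{thm:deft}, combined with the fact that $\alpha$ sends the meridian to a parabolic element, ensures that this correction does not introduce spurious vanishing or inflate the degree. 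Plugging $n = 2$ into the Friedl--Kim bound and accounting for $\Delta_0^{\alpha}$ then yields $x(K) \geq \tfrac{1}{2}\deg(\tk)$.

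For the fibered statement I would use a mapping-torus argument. If $X$ is the mapping torus of a homeomorphism $\psi \colon \Sigma \to \Sigma$, where $\Sigma$ realizes $x(K)$, then the twisted chain complex $C_*(X;\,\C^2 \otimes \ct)$ is chain equivalent to the mapping cone of $t\cdot \psi_* - \id$ acting on $C_*(\Sigma;\C^2)$. The resulting torsion is, up to a small correction from $H_0$, the characteristic polynomial $\det(t\cdot \psi_* - \id)$ on $H_*(\Sigma;\C^2)$, which is visibly monic, of degree equal to $2\cdot x(K)$ via the twisted Euler characteristic of $\Sigma$. This delivers simultaneously the sharp equality $x(K) = \tfrac{1}{2}\deg(\tk)$ and the monicity of the lead coefficient.

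The main obstacle is the careful bookkeeping forced by the parabolicity of $\alpha$ restricted to the boundary torus. The eigenvalues contributed by the boundary are equal to $1$, and could a priori introduce extra $(t-1)$ factors or cancel off the leading term of $\tk$. Controlling this is what is needed to confirm that the inequality is exactly $\tfrac{1}{2}\deg(\tk)$ with no hidden boundary correction, and that no leading coefficient gets killed in the fibered case. Fortunately, Turaev's Reidemeister--Milnor formulation of $\tk$ is designed to absorb the boundary contribution cleanly, and Theorem~\ref{thm:deft}(\ref{item:nonvanish-intro}) certifies that no such degeneracy occurs for the hyperbolic torsion polynomial.
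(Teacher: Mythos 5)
Your proposal takes essentially the same approach as the paper: the paper's proof is simply the remark that the theorem is an immediate consequence of Friedl--Kim \cite[Theorem~1.1]{FK06} (for the genus bound, applied with $n=2$) and of Goda--Kitano--Morifuji \cite{GKM05} (for the fibered case). Your mapping-torus sketch of the fibered case is a correct high-level description of the GKM mechanism rather than a genuinely different route.

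One small point of confusion worth flagging. You invoke Theorem~\ref{thm:deft}(\ref{item:nonvanish-intro}) to ``ensure that this correction does not introduce spurious vanishing or inflate the degree,'' but this is not how the pieces fit together. The Friedl--Kim bound is stated directly in terms of the Reidemeister torsion $\tau$; the $\Delta_0$-type boundary contribution is already incorporated into that statement, so there is no separate degree bookkeeping to do once one identifies $\tk$ with that torsion. Non-vanishing at roots of unity is logically independent of the genus bound: its only role here would be to certify $\tk \neq 0$ so that $\deg(\tk)$ makes sense, and for this a single nonzero value suffices. This is a minor misattribution of what each ingredient accomplishes, not a gap in the argument.
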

Theorem~\ref{thm:top} is an immediate consequence the definitions
below and of \cite[Theorem~1.1]{FK06} (for the genus bound) and of the
work of Goda, Kitano and Morifuji \cite{GKM05} (for the fibered case);
see also Cha \cite{Ch03}, Kitano and Morifuji \cite{KtM05}, Pajitnov
\cite{Paj07}, Kitayama \cite{Kiy08}, \cite{FK06} and
\cite[Theorem~6.2]{FV10}.

\subsection{Experimental results}

The calculations in \cite{Ch03,GKM05,FK06} gave evidence that when one
can freely choose the representation $\a$, the twisted torsion
polynomial is very successful at detecting both $x(K)$ and non-fibered
knots.  Moreover \cite{FV11} shows that collectively the twisted
torsion polynomials of representations coming from homomorphisms to
finite groups determine whether a knot is fibered.  However, it is not
known if all twisted torsion polynomials together always detect
$x(K)$.

Instead of considering many different representations as in the
work just discussed, we focus here on a single, albeit canonical,
representation.  Despite this, we find that $\tk$ alone is a very powerful
invariant.  In Section~\ref{sec:15cross}, we describe computations
showing that the bound on $x(K)$ is sharp for all $313{,}209$
hyperbolic knots with at most 15 crossings; in contrast the bound from
$\Delta_K$ is not sharp for 2.8\% of these knots. Moreover, among such
knots $\tk$ was monic \emph{only} when the knot was fibered,
whereas 4.0\% of these knots have monic $\Delta_K$ but aren't fibered.
(Here we computed $\tk$ numerically to a precision of 250 decimal
places, see Section~\ref{sec-comp-details} for details.)

Given all this data, we are compelled to propose the following, even
though on its face it feels quite implausible, given the general
squishy nature of Alexander-type polynomials.
\begin{conjecture}\label{conj:top}
  For a hyperbolic knot $K$ in $S^3$, the hyperbolic torsion
  polynomial $\tk$ determines $x(K)$, or equivalently its genus.
  Moreover, the knot $K$ is fibered if and only if $\tk$ is monic.
\end{conjecture}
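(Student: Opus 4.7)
The strategy is to establish the contrapositive in both directions: show that $\deg(\tk) < 2x(K)$ leads to a contradiction, and separately that if $K$ is non-fibered then $\tk$ cannot be monic.  I would handle these two statements essentially independently, as the algebraic phenomena underlying a lowered degree and a non-monic lead coefficient are likely quite different, even though both need to be ruled out.

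For the fibering half, the plan is to combine sutured manifold theory with the bridge between $\tk$ and the $\C$-valued torsions $\tau(X_m,\a_m)$ of the cyclic covers $X_m \to X$ described in the excerpt.  By a theorem of Gabai, if $K$ is non-fibered then for every minimal-genus Seifert surface $\S$ the sutured manifold obtained by cutting $X$ along $\S$ is taut but not a product, and the same persists in each cyclic cover $X_m$.  The Friedl--Vidussi theorem \cite{FV11} then produces, in some finite cyclic cover, a representation to a finite group whose twisted Alexander polynomial is non-monic.  One would try to show that the tower of torsions $\tau(X_m,\a_m)$ encodes enough of the same structure: if $\tk$ were monic, then by the relationship between $\tk$ and the $\tau(X_m,\a_m)$ noted after Theorem~\ref{thm:deft}, the cyclic-cover torsions would satisfy a pointwise monicity condition forcing a product-like hierarchy, contradicting Gabai's picture.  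Making this precise presumably requires translating monicity of $\tk$ into statements about how the hyperbolic holonomy acts along closed geodesics, for instance through the conjectural identification of $\tau(X_m,\a_m)$ with Ruelle zeta values alluded to in the remark after Theorem~\ref{thm:deft}.

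For the genus half, my plan is to pass to the universal polynomial $\tkxt$ over the distinguished component $X_0$ of the character variety constructed later in the paper.  One expects that at a generic point $\rho$ of $X_0$, the twisted torsion polynomial attached to $\rho$ has degree exactly $2x(K)$, reflecting non-vanishing of the relevant leading minor on the Alexander module in a Zariski-open set.  The crux is then to show that the hyperbolic representation $\a$ cannot lie in the non-generic locus where the degree drops.  Here I would try to exploit the symmetry $\tk(t^{-1}) = \tk(t)$, the non-vanishing at roots of unity from item~\ref{item:nonvanish-intro} of Theorem~\ref{thm:deft}, and arithmetic constraints from the trace field, to argue that any coincidental vanishing of the leading minor at $\a$ would force an algebraic identity among conjugates of traces that fails in the knots for which one can do explicit computation.

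The main obstacle, and where I expect the argument to stall, is precisely the \emph{rigidity} of the hyperbolic representation.  Unlike finite-image representations, where one freely twists by characters to harvest a large family of invariants, here one has a single Mostow-rigid point inside a positive-dimensional variety, and it is not \emph{a priori} clear that this point is generic in the sense required for either half.  A successful proof will likely have to identify a geometric reason --- via $L^2$-torsion, analytic torsion, or a Ruelle zeta function --- for the hyperbolic point to always lie in the favourable stratum of $X_0$.  The experimental evidence for all $313{,}209$ knots with at most $15$ crossings is striking, but a general mechanism that explains it remains elusive, which is presumably why the authors themselves describe the conjecture as counter to the usual squishy nature of Alexander-type polynomials.
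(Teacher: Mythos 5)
The statement you were asked to prove is Conjecture~\ref{conj:top}, not a theorem: the paper does not prove it, and indeed explicitly proposes it as an open conjecture supported only by the exhaustive numerical evidence for all $313{,}209$ hyperbolic knots with at most $15$ crossings described in Section~\ref{sec:15cross}. There is therefore no proof in the paper to compare your argument against. Your write-up is self-aware about this --- you say the argument ``will likely stall'' at the rigidity of the hyperbolic representation and that ``a general mechanism that explains it remains elusive'' --- and that honesty is correct, but it also means what you have submitted is a research programme, not a proof, and no amount of refinement of these particular steps is currently known to close the gap.

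Two of your intermediate steps are worth flagging as not merely incomplete but actually pointing in directions the paper shows to be problematic. First, for the fibering half you invoke \cite{FV11} to say that twisted polynomials of finite-quotient representations detect fibering, and then hope to transport that to the single hyperbolic representation via the $\tau(X_m,\a_m)$; but the paper's whole point is that one is \emph{not} free to range over many representations here, and no mechanism is known for a single Mostow-rigid point to inherit the detection property that holds only collectively over all finite quotients. Second, for the genus half you correctly note that Corollary~\ref{cor:chartorsion} (equivalently Theorem~\ref{thm:chartorsion}) gives degree $\deg(\tkx)$ on a Zariski-open subset of the distinguished component $X_0$, but this reduces Conjecture~\ref{conj:top} to Conjecture~\ref{conj:char} plus the claim that the hyperbolic character is sufficiently generic; the paper states explicitly that ``there seems to be no a~priori reason why this must always be the case,'' and Conjecture~\ref{conj:char} is itself open except for certain $2$-bridge knots. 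So the ``genericity'' of the holonomy character, which your sketch needs, is precisely the unresolved content of the conjecture, not a lemma you can appeal to.
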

We have not done extensive experiments for knots in manifolds other
than $S^3$, but so far we have not encountered any examples where
$\tk$ doesn't contain perfect genus and fibering information.

\subsection{Topological information: Chirality and mutation}

When $K$ is an amphichiral, $\tk$ is a real polynomial (Theorem
\ref{thm:deft}(\ref{item:amph-intro})).  This turns out to be an
excellent way to detect chirality.  Indeed, among hyperbolic knots in
$S^3$ with at most 15 crossings, the 353 knots where $\tk$ is real are
\emph{exactly} the amphichiral knots (Section~\ref{sec:chiral}).

Also, hyperbolic invariants often do not detect mutation, for example
the volume \cite{Ru87}, the invariant trace field
\cite[Corollary~5.6.2]{MR03}, and the birationality type of the
geometric component of the character variety \cite{CL96,Ti00,Ti04}.
The ordinary Alexander polynomial $\Delta_K$ is also mutation
invariant for knots in $S^3$. However, $x(K)$ can change under
mutation, and given that $x(K)$ determines the degree of $\tk$ for all
15 crossing knots, it follows that $\tk$ \emph{can} change under
mutation; we discuss many such examples in
Section~\ref{sec:ex-mutants}.  However, sometimes mutation does
preserve $\tk$, and we don't know of any examples of two knots with
the same $\tk$ which aren't mutants.

\subsection{Adjoint torsion polynomial}

As we mentioned earlier, there is another natural way to get a torsion
polynomial from the holonomy $\alphabar \maps \pi_1(M) \to \PSL$,
namely by considering the adjoint representation of $\PSL$ on its Lie
algebra.  The corresponding torsion polynomial was studied by Dubois
and Yamaguchi \cite{DY09}, partly building on work of Porti
\cite{Po97}.  We refer to this invariant here as the \emph{adjoint
  torsion polynomial} and denote it $\tk^\adj$.  We also numerically
calculated this invariant for all knots with at most 15 crossings.
Unlike what we found for $\tk$, the degree of $\tk^\adj$ was
\emph{not} determined by the genus for 8{,}252 of these knots.
Moreover, we found 12 knots where the genus bound from $\tk^\adj$ was
not sharp even after accounting for the fact that $x(K)$ is
necessarily an odd integer.  The differing behaviors of these two
polynomials seems very mysterious to us; understanding what's behind
it might shed light on Conjecture~\ref{conj:top}.  See
Sections~\ref{section:adj} and \ref{sec:adjointdata} for the details
on what we found for $\tk^\adj$.

\subsection{Character varieties}

So far, we have focused on the twisted torsion polynomial of (a lift
of) the holonomy representation of the hyperbolic structure on $M$.
However, this representation is always part of a complex curve of
representations $\pi_1(M) \to \SL$, and it is natural to study how the
torsion polynomial changes as we vary the representation.  In
Sections~\ref{section:charactervariety} and \ref{sec:charexs}, we
describe how to understand all of these torsion polynomials at once, and
use this to help explain some of the patterns observed in
Section~\ref{sec:15cross}.  For the special case of 2-bridge knots,
Kim and Morifuji \cite{Mo08,KM10} had previously studied how the
torsion polynomial varies with the representation, and our results
here extend some of their work to more general knots.

Consider the character variety $X(K) \assign \Hom\big(\pi_1(M), \,
\SL\big)/\!/\,\SL$, which is an affine algebraic variety over $\C$. We
show in Section~\ref{section:charactervariety} that each $\chi \in
X(K)$ has an associated torsion polynomial $\tk^\chi$.  These
$\tk^\chi$ vary in an understandable way, in terms of a polynomial
with coefficients in the ring of regular functions $\cx$:
\begin{theorem}\label{thm:chartorsion}
  Let $X_0$ be an irreducible component of $X(K)$ which contains the
  character of an irreducible representation.  There is a unique $\tkx
  \in \cxt$ so that for all $\chi \in X_0$ one has $ \tk^\chi(t) =
  \tkx(\chi)(t)$.  Moreover, $\tkx$ is itself the torsion
  polynomial of a certain representation $\pi_1(M) \to \SLF$, and thus has
  all the usual properties (symmetry, genus bound, etc.).
\end{theorem}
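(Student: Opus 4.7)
The plan is to realize $\tkx$ as the torsion polynomial of a single tautological representation and then verify that evaluation at each character $\chi \in X_0$ recovers $\tk^\chi(t)$.

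First, pick an irreducible component $R_0$ of the representation variety $\Hom(\pi_1(M), \SL)$ that maps dominantly onto $X_0$; the hypothesis that $X_0$ contains an irreducible character ensures $R_0$ contains an irreducible representation. On $R_0$ there is a tautological representation
\[
  \rhotaut \maps \pi_1(M) \to \SLCR, \qquad \rhotaut(g)(\rho) = \rho(g),
\]
which I view as a representation into $\SLF$ for $\F = \cRf$. Applying the torsion formalism of Section~\ref{section:twisted} produces a Laurent polynomial $\tktaut \in \F[t^{\pm 1}]$; its specialization at the holonomy character is the non-zero polynomial $\tk$, by Theorem~\ref{thm:deft}(\ref{item:nonvanish-intro}), so $\tktaut$ itself is non-zero.

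Next, I would establish the key compatibility: for each $\rho \in R_0$, the twisted chain complex of $\rhotaut$ specializes via base change to the twisted chain complex of $\rho$, so evaluating $\tktaut$ at $\rho$ returns $\T^\rho(t)$. Since Reidemeister torsion is unchanged under conjugation of the representation, $\tktaut$ is $\SL$-invariant on $R_0$; on the irreducible locus the quotient $R_0 \to X_0$ is geometric with generic stabilizer in the center of $\SL$, and this identifies the invariant fraction field $\cRf^{\SL}$ with the fraction field of $\cx$. Thus $\tktaut$ descends to an element of $\mathrm{Frac}(\cx)[t^{\pm 1}]$ which agrees with $\chi \mapsto \tk^\chi(t)$ wherever defined. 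This already supplies the ``moreover'' clause: $\tkx$ is, by construction, the torsion polynomial of $\rhotaut$ viewed as a representation into $\SLF$.

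The main obstacle will be promoting each coefficient from the fraction field to $\cx$ itself. For this I would appeal to the construction of the preceding subsection, which assigns a well-defined Laurent polynomial $\tk^\chi(t) \in \C[t^{\pm 1}]$ to \emph{every} $\chi \in X_0$; the reducible characters cause no trouble, since independence of the chosen representative representation there follows from the conjugation-invariance of torsion. Each coefficient of $\tktaut$, viewed as a rational function on $X_0$, therefore has a finite value at every point, and so, passing to the normalization of $X_0$ if necessary, is regular. Uniqueness of $\tkx$ is then immediate, since any two elements of $\cxt$ that agree pointwise on the Zariski-dense $X_0$ must have identical coefficients in $\cx$.
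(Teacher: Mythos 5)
Your proposal follows the same skeleton as the paper's proof — realize $\tkx$ via the tautological representation $\rhotaut\colon\pi\to\SLCR$ over the function field $\cRf$, note that $\tktaut$ is a Laurent polynomial because $\rhotaut$ is irreducible, and then descend to $X_0$ using conjugation invariance of torsion. That part is fine. Where you diverge, and where the gap sits, is the order of operations in showing the coefficients are honest regular functions.

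You try to descend to $\mathrm{Frac}(\cx)$ first and then argue regularity on $X_0$ from the fact that $\chi \mapsto \tk^\chi(t)$ is pointwise well-defined. But ``takes a finite value at every point'' is strictly weaker than ``is regular at every point'' on a possibly singular variety (think of $y/x$ on the cuspidal cubic $y^2 = x^3$, which extends continuously across the cusp but is not in the coordinate ring). Your parenthetical ``passing to the normalization of $X_0$ if necessary'' acknowledges exactly this issue, but it does not resolve it: the theorem asserts $\tkx \in \cxt$, not $\tkx$ with coefficients in the coordinate ring of the normalization, so if normalization were genuinely necessary the conclusion would not be the one stated. The paper instead does the regularity argument at the level of $R_0$ before descending: Proposition~\ref{prop:fox} expresses the coefficients of $\tktaut$ as an explicit ratio of regular functions on $R_0$, and by choosing an appropriate generator $x_i$ for each point of $R_0$ one obtains a \emph{regular} (not just finite-valued) local expression near every point, hence membership in $\cR = \C[R_0]$. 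Only then does one pass to $\cx = \C[R_0]^\SL$ by conjugation invariance. Reordering as the paper does is what makes the regularity step airtight; your version as written leaves a genuine hole. Everything else in your argument (nonvanishing, the ``moreover'' clause, uniqueness) is correct.
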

\begin{corollary}\label{cor:char}
  Let $K$ be a knot in an integral homology \3-sphere.   Then
  \begin{enumerate}
  \item The set $\setdef{ \chi \in X(K)}{\deg(\tk^\chi ) = 2 x(K)}$ is Zariski open.
  \item The set $\setdef{ \chi \in X(K)}{\tk^\chi \mbox{ is monic }}$ is Zariski closed.
\end{enumerate}
\end{corollary}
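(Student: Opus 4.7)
The plan is to reduce both statements to properties of the single Laurent polynomial $\tkx \in \cxt$ given by Theorem~\ref{thm:chartorsion}, working one irreducible component of $X(K)$ at a time. Fix such a component $X_0$ containing the character of an irreducible representation, write $\tkx(t) = \sum_i a_i(x)\, t^i$ with $a_i \in \cx$, and set $d = \max\{i : a_i \not\equiv 0 \text{ in } \cx\}$. Since $\tkx$ is itself the torsion polynomial of a representation $\pi_1(M) \to \SLF$, it inherits the symmetry of Theorem~\ref{thm:deft}(1) and the genus bound of Theorem~\ref{thm:top}: we get $a_{-i} = a_i$ in $\cx$ and $\deg(\tkx) = 2d \leq 2x(K)$. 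For every $\chi \in X_0$, the specialization $\tk^\chi(t) = \sum_i a_i(\chi)\, t^i$ therefore has $\deg(\tk^\chi) \leq 2d$, with equality precisely when $a_d(\chi) \neq 0$.

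For part (1), if $d < x(K)$ then the set in question, restricted to $X_0$, is empty and hence Zariski open. Otherwise $d = x(K)$ and this set equals the complement in $X_0$ of the vanishing locus of $a_d \in \cx$, again Zariski open. For part (2), the polynomial $\tk^\chi$ is monic iff its actual leading coefficient equals $1$. Because the degree of $\tk^\chi$ can drop at non-generic $\chi$, the leading coefficient is not given by a single regular function; instead I would stratify by the drop. Namely, $\tk^\chi$ is monic iff there exists $k \in \{0, 1, \ldots, 2d\}$ with $a_d(\chi) = \cdots = a_{d-k+1}(\chi) = 0$ and $a_{d-k}(\chi) = 1$. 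Hence the monic locus in $X_0$ is
\[
\bigcup_{k=0}^{2d} \bigl\{\, \chi \in X_0 \;:\; a_d(\chi) = \cdots = a_{d-k+1}(\chi) = 0 \text{ and } a_{d-k}(\chi) = 1 \,\bigr\},
\]
a finite union of Zariski closed sets, hence Zariski closed. Finally, gluing over the finitely many irreducible components of $X(K)$ containing an irreducible character, and invoking the standing conventions on the remaining, purely reducible components, yields the global statements on $X(K)$.

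The main nuance is the stratification in (2): a one-line argument of the form ``monic means the leading coefficient equals $1$'' would only work if the leading coefficient were a single regular function on $X_0$, which it is not. The stratification above repackages the monic condition as an honest finite union of closed algebraic subsets of $X_0$. Part (1) and the component-by-component gluing are essentially formal once Theorem~\ref{thm:chartorsion} is in hand.
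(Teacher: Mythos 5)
Your proof is correct and follows the same route as the paper: restrict to one irreducible component $X_0$ at a time, specialize the universal polynomial $\tkx$ from Theorem~\ref{thm:chartorsion}, and read off the degree and monicity loci. In effect you are unwinding Corollary~\ref{cor:chartorsion}, which the paper cites and treats as immediate, and your stratification in part (2) is exactly the content needed to make that ``immediate'' step rigorous. The one place you are vaguer than the paper is the components consisting entirely of reducible characters: there the paper invokes Theorem~\ref{thm:ktm05}(\ref{item:redtwo}), which writes $\tk^\chi$ as an explicit rational expression in $\tr(\alpha(\mu_K))$ and the fixed Alexander polynomial $\Delta_K$, and the required openness and closedness are then read off from that formula rather than from any ``standing convention.''
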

It is natural to focus on the component $X_0$ of $X(K)$ which contains
the (lift of) the holonomy representation of the hyperbolic structure,
which we call the \emph{distinguished component}.  In this case $X_0$
is an algebraic curve, and we show the following conjecture is implied by Conjecture~\ref{conj:top}.
\begin{conjecture}\label{conj:char}
  Let $K$ be a hyperbolic knot in $S^3$, and $X_0$ be the distinguished
  component of its character variety.  Then $2 x(K) = \deg( \tkx )$
  and $\tkx$ is monic if and only if $K$ is fibered.
\end{conjecture}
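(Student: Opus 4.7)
The plan is to derive Conjecture~\ref{conj:char} from Conjecture~\ref{conj:top} by combining Theorem~\ref{thm:chartorsion}, Corollary~\ref{cor:char}, and the fact that the character $\chi_0$ of the lifted holonomy used to define $\tk$ is itself a point of the distinguished component $X_0$. Concretely, $\tk = \tk^{\chi_0}$ and $\chi_0 \in X_0$ by construction. The holonomy of a hyperbolic knot complement is irreducible, so the irreducibility hypothesis of Theorem~\ref{thm:chartorsion} is satisfied and a universal $\tkx \in \cxt$ exists.

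For the degree assertion, Theorem~\ref{thm:chartorsion} states that $\tkx$ enjoys the usual genus bound, giving $2x(K) \geq \deg(\tkx)$. For the reverse inequality, each coefficient of $\tkx$ is a regular function on the irreducible curve $X_0$, and for $\chi \in X_0$ the degree of the specialization $\tk^\chi = \tkx(\chi)$ can drop only where the appropriate leading coefficient of $\tkx$ vanishes. By Corollary~\ref{cor:char}(1), the set $U = \{\chi \in X_0 : \deg(\tk^\chi) = 2x(K)\}$ is Zariski open, and Conjecture~\ref{conj:top} provides $\deg(\tk^{\chi_0}) = \deg(\tk) = 2x(K)$, so $\chi_0 \in U$. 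Hence $U$ is a nonempty, and therefore Zariski dense, open subset of the irreducible curve $X_0$. The coefficient of $t^{2x(K)}$ in $\tkx$ is nonzero on $U$, so it is not identically zero and $\deg(\tkx) \geq 2x(K)$; combined with the genus bound this yields $\deg(\tkx) = 2x(K)$.

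For the monic statement, if $K$ is fibered then Theorem~\ref{thm:top}, applied to $\tkx$ via the ``usual properties'' clause of Theorem~\ref{thm:chartorsion}, forces $\tkx$ to be monic. Conversely, if the leading coefficient of $\tkx$ is the constant function $1 \in \cx$, then the specialization at any $\chi \in U$ --- and in particular at $\chi_0$ --- does not change the leading coefficient, so $\tk = \tk^{\chi_0}$ is monic; Conjecture~\ref{conj:top} then gives that $K$ is fibered. (Alternatively one can invoke Corollary~\ref{cor:char}(2) together with the density of $U$.) The substantive difficulty here is entirely contained in Conjecture~\ref{conj:top}; once that is granted, the argument is a purely formal bootstrapping from the pointwise invariant $\tk$ to the universal invariant $\tkx$, with the irreducibility of $X_0$ ensuring that the single genericity statement at $\chi_0$ propagates to generic behavior of $\tkx$.
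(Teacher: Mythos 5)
Your derivation of Conjecture~\ref{conj:char} from Conjecture~\ref{conj:top} via the universality of $\tkx$ is correct and matches the paper's approach, which simply cites Corollary~\ref{cor:chartorsion} to get the same semicontinuity facts you re-derive (that $\deg(\tk^\chi) \le \deg(\tkx)$ with generic equality, and that non-monic $\tkx$ forces $\tk^\chi$ non-monic off a proper closed subset). Your invocation of Corollary~\ref{cor:char}(1) and a density argument is a slightly longer route to the same conclusion, but the logic is sound.
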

\noindent
At the very least, Conjecture~\ref{conj:char} is true for many
2-bridge knots as we discuss in Section~\ref{sec:2bridge}.  We also
give several explicit examples of $\tkx$ in Section~\ref{sec:charexs}
and use these to explore a possible avenue for bringing the Culler-Shalen
theory of surfaces associated to ideal points of $X(K)$ to bear on Conjecture~\ref{conj:char}.

\subsection{Other remarks and open problems}

For simplicity, we restricted ourselves here to the study of knots,
especially those in $S^3$.  However, we expect that many of the
results and conjectures are valid for more general 3-manifolds.  In
the broader settings, the appropriate question is whether the twisted
torsion polynomial detects the Thurston norm and fibered classes
(see \cite{FK06,FK08} and \cite{FV08b} for more details).

We conclude this introduction with a few questions and open problems:
\bn
\item Does $\tk$ determine the volume of the complement of $K$? Some
  calculational evidence is given in \cite{FJ11} and in
  Section~\ref{sec:ex-mutants} in this paper.

\item If two knots in $S^3$ have the same $\tk$, are they necessarily
  mutants?  See Section~\ref{sec:ex-mutants} for more on this.

\item Does the invariant $\tk$ contain information about symmetries
  of the knot besides information on chirality?

\item Does there exist a hyperbolic knot with $\tk(1)=1$?

\item If $\tk$ is a real polynomial, is $K$ necessarily amphichiral?

\item For an amphichiral knot, is the top coefficient of $\tk$ always
  positive?

\item For fibered knots, why is the \emph{second} coefficient of $\tk$
  so often real?  This coefficient is the sum of the eigenvalues of
  the monodromy acting on the twisted homology of the fiber.  See
  Section~\ref{sec:otherpat} for more.

\item Why is $|\tk(-1)|>|\tk(1)|$ for $99.99\%$ of the knots
  considered in Section~\ref{sec:otherpat}?
\en

\noindent \textbf{Acknowledgments.}  We thank J\'er\^ome
Dubois, Taehee Kim, Takahiro Kitayama, Vladimir Markovic, Jinsung
Park, Joan Porti, Dan Silver, Alexander Stoimenow, Susan Williams and
Alexandru Zaharescu for interesting conversations and helpful
suggestions.  We are particularly grateful to Joan Porti for sharing
his expertise and early drafts of \cite{MP09,
    MenalFerrerPorti2011} with us.  We also thank the referee for
  helpful comments. Dunfield was
partially supported by US NSF grant DMS-0707136.

\section{Twisted invariants of 3--manifolds}
\label{section:twisted}

In this section, we review torsions of twisted homology groups and
explain how they are used to define the twisted torsion polynomial of
a knot together with a representation of its fundamental group to
$\SL$.  We then summarize the basic properties of these torsion
polynomials, including how to calculate them.

\subsection{Torsion of based chain complexes}

Let $C_*$ be a finite chain complex over a field $\F$. Suppose that
each chain group $C_i$ is equipped with an ordered basis $c_i$ and
that each homology group $H_i(C_*)$ is also equipped with an ordered
basis $h_i$.  Then there is an associated \emph{torsion invariant}
$\tau(C_*,c_*,h_*)\in \F^\times\assign\F\sm \{0\}$ as described in the
various  excellent expositions \cite{Mi66,Tu01,Tu02,Nic03}. We will
follow the convention of Turaev, which is the multiplicative inverse
of Milnor's invariant.  If the complex $C_*$ is acyclic, then we will
write $\tau(C_*,c_*)\assign\tau(C_*,c_*,\emptyset)$.

\subsection{Twisted homology}

For the rest of this section, fix a finite CW--complex $X$ and set
$\pi \assign \pi_1(X)$.  Consider a representation $\a\colon \pi\to
\GL(V)$, where $V$ is a finite-dimensional vector space over $\F$.  We
can thus view $V$ as a left $\Z[\pi]$--module.  To define the twisted
homology groups $H_*^\a(X;V)$, consider the universal cover $\Xtilde$
of $X$.  Regarding $\pi$ as the group of deck transformations of
$\Xtilde$ turns the cellular chain complex $C_*(\Xtilde) \assign
C_*(\Xtilde; \Z)$ into a left $\Z[\pi]$--module.  We then give
$C_*(\Xtilde)$ a right $\Z[\pi]$--module structure via $c \cdot g
\assign g^{-1} \cdot c$ for $c \in C_*(\Xtilde)$ and $g \in \pi$,
which allows us to consider the tensor product
\[
C_*^\a(X;V) \assign C_*(\Xtilde)\otimes_{\Z[\pi]}V.
\]
Now $C_*^\a(X;V)$ is a finite chain complex of vector spaces, and we
define $H_*^\a(X;V)$ to be its homology.

We call two representations $\a\colon \pi\to \GL(V)$ and $\b\colon
\pi\to \GL(W)$ \emph{conjugate} if there exists an isomorphism $\Psi
\maps V\to W$ such that $\a(g)=\Psi^{-1}\circ \b(g) \circ \Psi$ for
all $g\in \pi$.  Note that such a $\Psi$ induces an isomorphism of
$H_*^\a(X;V)$ with $H_*\myupbeta(X;W)$.

\subsection{Euler structures, homology orientations and twisted torsion of CW complexes}\label{section:twitorsion}\label{section:twitau}

To define the twisted torsion, we first need to introduce certain
additional structures on which it depends.  (In our final application,
most of these will come out in the wash.)  First, fix an orientation
of each cell of $X$.  Then choose an ordering of the cells of $X$ so
we can enumerate them as $c_j$; only the relative order of cells of
the same dimension will be relevant, but it is notationally convenient
to have only one subscript.

An \emph{Euler lift} for $X$ associates to each cell $c_j$ of $X$ a
cell $\ctil_j$ of $\Xtilde$ which covers it.  If $\ctilprime_j$ is another
Euler lift, then there are unique $g_j \in \pi$ so that $\ctilprime_j = g_j
\cdot \ctil_j$.  We say these two Euler lifts are \emph{equivalent} if
\[
\prod_j g_j^{(-1)^{\scriptstyle \dim( c_j )}}
 \mtext{represents the trivial element in $H_1(X;\Z)$.}
\]
An equivalence class of Euler lifts is called an \emph{Euler
  structure} on $X$.  The set of Euler structures on $X$, denoted
$\eul(X)$, admits a canonical free transitive action by
$H_1(X;\Z)$; see \cite{Tu90,Tu01,Tu02,FKK11} for more on
these Euler structures.  Finally, a \emph{homology orientation} for $X$
is just an orientation of the $\R$-vector space $H_*(X;\R)$.

Now we can define the torsion $\tau(X,\a,e,\w)$ associated to $X$, a
representation $\a$, an Euler structure $e$, and a homology orientation
$\w$.  If $H_*^\a(X;V)\ne 0$, we define $\tau(X,\a,e,\w)\assign0$, and
so now assume $H_*^\a(X;V)=0$.  Up to sign, the torsion we seek will
be that of the twisted cellular chain complex $C_*^\a(X;V)$ with
respect to the following ordered basis.  Let $\{v_k\}$ be any basis of
$V$, and $\{\ctil_j\}$ any Euler lift representing $e$.  Order the
basis $\{ \ctil_j \otimes v_k \}$ for $C_*^\a(X;V)$ lexicographically,
i.e.~$\ctil_j \otimes v_k < \ctil_{j'} \otimes v_{k'}$ if either $j <
j'$ or both $j = j'$ and $k < k'$. We thus have a based acyclic
complex $C_*^\a(X;V)$ and we can consider
\[ \tau(C_*^\a(X;V),c_*\otimes v_*)\in \F^\times.\]
When $\dim(V)$ is even, this torsion is in fact independent of all the choices
involved, but when $\dim(V)$ is odd we need to augment it as follows to
remove a sign ambiguity.

Pick an ordered basis $h_i$ for $H_*(X;\R)$ representing our homology
orientation $\w$.  Since we have ordered the cells of $X$, we can
consider the torsion
\[ \tau\big(C_*(X;\R),c_{*},h_*\big)\in \R^\times.\] We define
$\b_i(X)=\sum_{k=0}^i\dim\big(H_k(X;\R)\big)$ and $\g_i(X)
=\sum_{k=0}^i \dim\big(C_k(X;\R)\big)$, and then set $N(X) = \sum_{i}
\b_i(X)\cdot \g_i(X)$. Following \cite{FKK11}, which generalizes the
ideas of Turaev \cite{Tu86,Tu90}, we now define
\begin{multline*}
\tau(X,\a,e,\w)\assign \\
(-1)^{N(X)\cdot \dim(V)}\cdot \tau\big(C_*^\a(X;V),c_*\otimes
v_*\big)\cdot \sign\Big(\tau\big(C_*(X;\R),c_{*},h_*\big)\Big)^{\dim(V)}.
\end{multline*}
A straightforward calculation using the basic properties of
torsion shows that this invariant does not depend on any of the
choices involved, i.e.~it is independent of the ordering and orientation
of the cells of $X$, the choice of representatives for $e$ and $\w$,
and the particular basis for $V$.  Similar elementary arguments prove
the following lemma.  Here $-\w$ denotes the opposite homology
orientation to $\w$, and note that $(\det\circ \a):\pi\to \F$ factors through $H_1(X;\Z)$.
\begin{lemma} \label{lem:torondata}
  If $\b$ is conjugate to $\a$, then given $h\in H_1(X;\Z)$ and $\eps\in \{-1,1\}$, one has
\[ \tau(X,\b,h\cdot e,\eps\cdot \w)=\eps^{\dim(V)}\cdot \big( (\det \circ
\a)(h)\big)^{-1}\cdot \tau(X,\a,e,\w).\]
\end{lemma}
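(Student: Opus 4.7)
The plan is to verify each of the three modifications (replacing $\a$ by a conjugate $\b$, acting by $h$ on the Euler structure, and reversing $\w$) in isolation and then combine them, since the corresponding contributions multiply independently in the definition of $\tau(X,\a,e,\w)$.

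Conjugation invariance and the reversal of $\w$ are essentially immediate. If $\Psi \colon V \to W$ intertwines $\a$ with $\b$, then for any ordered basis $\{v_k\}$ of $V$ the map $\ctil_j\otimes v_k \mapsto \ctil_j\otimes \Psi(v_k)$ is a chain isomorphism $C_*^\a(X;V)\to C_*^\b(X;W)$ carrying chosen basis to chosen basis, so the torsions of the two based acyclic complexes agree. Similarly, reversing $\w$ flips the sign of $\tau\big(C_*(X;\R),c_*,h_*\big)$, so the factor $\sign(\cdot)^{\dim V}$ in the definition contributes $\eps^{\dim V}$; no other ingredient depends on $\w$.

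The essential calculation concerns the effect of replacing $e$ by $h\cdot e$. Represent $e$ by lifts $\{\ctil_j\}$ and $h\cdot e$ by $\{g_j\ctil_j\}$, where $\prod_j g_j^{(-1)^{\dim c_j}}$ represents $h$ in $H_1(X;\Z)$. Using the convention $c \cdot g = g^{-1} c$, the tensor relation yields $g_j\ctil_j \otimes v_k = \ctil_j \otimes \a(g_j^{-1})\, v_k$, so in dimension $i$ the new ordered basis of $C_i^\a(X;V)$ is obtained from the old one by a block diagonal matrix whose blocks are $\a(g_j^{-1})$ (one for each $i$-cell), of determinant $\prod (\det\circ\a)(g_j)^{-1}$. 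The standard transformation rule for torsion under change of basis then produces the multiplicative factor
$$\prod_j (\det\circ\a)(g_j)^{-(-1)^{\dim c_j}} = \big((\det\circ\a)(h)\big)^{-1},$$
using that $\det\circ\a$ factors through $H_1(X;\Z)$. Combining this with the other two modifications yields the claimed identity.

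The only real obstacle is keeping sign conventions aligned: the module convention $c\cdot g = g^{-1}c$, the sign with which $\dim(c_j)$ enters the equivalence relation for Euler lifts, and the sign $(-1)^i$ (versus $(-1)^{i+1}$) governing change of basis for Turaev torsion must all be consistent to yield the final exponent $-(-1)^{\dim c_j}$. Beyond this bookkeeping, no new ideas are required --- the lemma is a direct consequence of the multiplicativity and basic transformation properties of Reidemeister--Milnor torsion.
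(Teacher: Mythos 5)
Your proposal is correct and matches the paper's intended argument: the paper gives no explicit proof, stating only that the lemma follows by ``similar elementary arguments'' to the verification that $\tau(X,\a,e,\w)$ is independent of the choices made in its definition, and your decomposition into the three independent modifications (conjugation, change of Euler structure, change of homology orientation) with the resulting multiplicative factors is exactly that argument. Your closing caveat about tracking the sign conventions (the module convention $c\cdot g=g^{-1}c$, the exponent $(-1)^{\dim c_j}$ in the Euler-lift equivalence and the normalization of the $H_1$-action on $\eul(X)$, and the $(-1)^i$ vs.\ $(-1)^{i+1}$ in the change-of-basis rule for torsion) is the right thing to flag, since those conventions are precisely what produces $\big((\det\circ\a)(h)\big)^{-1}$ rather than $(\det\circ\a)(h)$; once they are fixed consistently with Turaev's and the paper's setup, the computation closes as you describe.
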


\subsection{Twisted torsion of 3-manifolds}\label{section:torsion3mfd}

Let $N$ be a 3-manifold whose boundary is empty or consists of
tori. We first recall some facts about $\spinc$-structures on $N$; see
\cite[Section~XI.1]{Tu02} for details. The set $\spinc(N)$ of such
structures admits a free and transitive action by $H_1(N;\Z)$.
Moreover, there exists a map $c_1 \maps \spinc(N)\to H_1(N;\Z)$ which
has the property that $c_1(h\cdot \ss)=2h+c_1(\ss)$ for any $h\in
H_1(N;\Z)$ and $\ss\in \spinc(N)$.

Now consider a triangulation $X$ of $N$.  By \cite[Section~XI]{Tu02}
there exists a canonical bijection $\spinc(N)\to \eul(X)$ which is
equivariant with respect to the actions by $H_1(N;\Z)=H_1(X;\Z)$.
Given a representation $\a\colon \pi_1(N)\to \GL(V)$, an element
$\ss\in \spinc(N)$, and a homology orientation $\w$ for $N$, we
define
\[ \tau(N,\a,\ss,\w)\assign\tau(X,\a,e,\w)\] where $e$ is the Euler
structure on $X$ corresponding to $\ss$.  It follows from the work of
Turaev \cite{Tu86,Tu90} that $ \tau(N,\a,\ss,\w)$ is independent of
the choice of triangulation and hence well-defined.  See also
\cite{FKK11} for more details about $\tau(N,\a,\ss,\w)$.

\subsection{Twisted  torsion polynomial of a knot}\label{section:twitorsionknot}\label{section:twitauknot}

Let $K$ be a knot in a rational homology \3-sphere $Y$.  Throughout,
we write $X_K \assign Y \sm \mathrm{int}(N(K))$ for the knot exterior,
which is a compact manifold with torus boundary.  We define an
\emph{orientation} of $K$ to be a choice of oriented meridian $\mu_K$;
if $Y$ is oriented, instead of just orientable, this is equivalent to
the usual notion.  Suppose now that $K$ is oriented.  Let $\pi_K \assign
\pi_1(X_K)$ and take $\phi_K\colon \pi_K\to \Z$ to be the unique
epimorphism where $\phi(\mu_K) > 0$.  There is a
canonical homology orientation $\w_K$ for $X_K$ as follows: take a
point as a basis for $H_0(X_K;\R)$ and take $\{\mu_K\}$ as the basis
for $H_1(X_K;\R)$.  We will drop $K$ from these notations if the
knot is understood from the context.

For a representation $\a\colon \pi\to \gl(n,R)$ over a
commutative domain $R$, we define a torsion polynomial as follows.
Consider the left $\Z[\pi]$--module structure on $R^n\otimes_R
\rt \cong \rnt$ given by
\[  g\cdot (v\otimes p)\assign \big(\a(g) \cdot v\big)\otimes \left(t^{\phi(g)}p\right) \]
for $g\in \pi$ and $v\otimes p \in R^n\otimes_R \rt$.
Put differently, we get a
representation $\a\otimes \phi\colon \pi\to \gl(n,\rt)$.
We denote by $Q(t)$ the field of fractions of $\rt$.
The representation  $\a\otimes \phi$ allows us to view $\rt^n$ and $Q(t)^n$ as left $\Z[\pi]$--modules.
Given $\ss\in \spinc(X)$ we define
\[ \tau(K,\a,\ss)\assign\tau(X_K,\a\otimes \phi,\ss,\w_K)\in Q(t)\]
to be the \emph{twisted torsion polynomial of the oriented knot $K$}
corresponding to the representation $\a$ and the $\spinc$-structure
$\ss$.  Calling $\tau(K,\a,\ss)$ a polynomial even though it is
defined as a rational function is reasonable given
Theorem~\ref{thm:ktm05} below.

\begin{remark}
  The study of twisted polynomial invariants of knots was introduced
  by Lin \cite{Li01}. The invariant $ \tau(K,\a,\ss)$ can be viewed as
  a refined version of the twisted Alexander polynomial of a knot and
  of Wada's invariant.  We refer to \cite{Wad94,Ki96,FV10} for more on
  twisted invariants of knots and 3-manifolds.
\end{remark}

\subsection{The  \texorpdfstring{$\SLF$}{SL(2,F)} torsion
  polynomial of a knot}\label{section:sl2torsion}

Our focus in this paper is on \2-dimensional representations, and
we now give a variant of $\tau(K,\a,\ss)$ which does not depend
on $\ss$ or the orientation of $K$.  Specifically, for an
\emph{unoriented} knot $K$ in a \QHS\ and a representation $\a \maps
\pi \to \SLF$ we define
\begin{equation}\label{eq:tordef}
 \tk^\a \assign t^{\phi(c_1(\ss))}\cdot \tau(K,\a,\ss)
\end{equation}
for any $\ss \in \spinc(X)$ and choice of oriented meridian $\mu$ and show:
\begin{theorem}\label{thm:Tsym}
  For $\a \maps \pi \to \SLF$, the invariant $\tk^\a$ is a
  well-defined element of \hspace{0.01em} $\F(t)$ which is symmetric,
  i.e.~$\tk^\a(t^{-1}) = \tk^\a(t)$.
\end{theorem}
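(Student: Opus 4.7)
The plan is to prove three separate claims: (a) independence of the choice of $\spinc$-structure $\ss$, (b) independence of the orientation of $K$, and (c) the symmetry $\tk^\a(t^{-1}) = \tk^\a(t)$. I will prove (a) and (c) directly, and deduce (b) from (c).

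For (a), since $\a$ takes values in $\SLF$, the representation $\a\otimes\phi$ has determinant $g\mapsto \det(\a(g))\cdot t^{2\phi(g)}=t^{2\phi(g)}$. Lemma~\ref{lem:torondata} with $\b=\a$ and $\eps=1$ then gives
\[
\tau(K,\a,h\cdot\ss) \;=\; t^{-2\phi(h)}\cdot \tau(K,\a,\ss)
\]
for every $h\in H_1(X;\Z)$. On the other hand, the identity $c_1(h\cdot\ss)=2h+c_1(\ss)$ yields $t^{\phi(c_1(h\cdot\ss))}=t^{2\phi(h)}\cdot t^{\phi(c_1(\ss))}$. The two factors cancel in \eqref{eq:tordef}, proving (a).

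The main step is (c). Here one invokes Turaev's duality theorem for Reidemeister--Milnor torsion of compact 3-manifolds with toroidal boundary, in its $\spinc$-refined form (see \cite[Chapters~II and XII]{Tu02} and \cite{FKK11}): for any finite-dimensional representation $\rho$ of $\pi$, the torsions $\tau(X_K,\rho,\ss,\w)(t)$ and $\tau(X_K,\rho^\vee,\ss,\w)(t^{-1})$ agree up to an explicit factor of $\pm t^{\phi(c_1(\ss))}$, where $\rho^\vee(g)\assign \rho(g^{-1})^T$ is the contragredient representation. One now exploits the self-duality of $\SLF$-representations: for $\a\colon \pi\to \SLF$ one has $\a(g)^{-T}=J\a(g)J^{-1}$ where $J=\bigl(\begin{smallmatrix}0 & -1\\ 1 & 0\end{smallmatrix}\bigr)$ is the standard symplectic matrix. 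Therefore $(\a\otimes\phi)^\vee$ is conjugate to $\a\otimes(-\phi)$, i.e.~to the representation obtained from $\a\otimes\phi$ by substituting $t\mapsto t^{-1}$. Since torsion is invariant under conjugation of the representation, the duality formula collapses to $\tau(K,\a,\ss)(t^{-1})=\pm t^{-\phi(c_1(\ss))}\cdot \tau(K,\a,\ss)(t)$, and multiplying through by $t^{\phi(c_1(\ss))}$ yields $\tk^\a(t^{-1})=\pm \tk^\a(t)$. The sign works out to $+1$ thanks to the $N(X)\cdot \dim(V)$ correction built into $\tau$, proving (c).

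For (b), reversing the orientation of $K$ sends $\phi\mapsto -\phi$ and $\w\mapsto -\w$. By Lemma~\ref{lem:torondata} with $\eps=-1$, reversing $\w$ contributes a factor of $(-1)^{\dim V}=1$. Replacing $\phi$ by $-\phi$ has the same effect on $\a\otimes\phi$ as substituting $t\to t^{-1}$, while the prefactor $t^{\phi(c_1(\ss))}$ becomes $t^{-\phi(c_1(\ss))}$; together these show that under orientation reversal $\tk^\a(t)\mapsto \tk^\a(t^{-1})$, which equals $\tk^\a(t)$ by (c). The main obstacle is (c), and specifically the bookkeeping of the sign and of the $t^{\phi(c_1(\ss))}$ shift in Turaev's duality theorem, together with identifying the contragredient of $\a\otimes\phi$ explicitly via the symplectic form.
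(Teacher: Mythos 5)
Your overall strategy matches the paper's: you handle $\ss$-independence via Lemma~\ref{lem:torondata}, and you attack the symmetry $\tk^\a(t^{-1})=\tk^\a(t)$ by combining a $\spinc$-refined Turaev duality with the symplectic self-duality of $\SLF$-representations (the paper's argument for this step is to cite \cite[Theorem~7.3]{FKK11}, which is proved exactly this way). Part (a) is fine, and your deduction of (b) from (c) is a legitimate alternative to the paper's observation that orientation-independence is \emph{equivalent} to symmetry.

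The gap is in the bookkeeping for (c). As stated, your duality formula cannot be right and the ensuing algebra doesn't close. Since $(\a\otimes\phi)^\vee$ is conjugate to $\a\otimes(-\phi)$ and torsion is conjugation-invariant, one has $\tau\big(X_K,(\a\otimes\phi)^\vee,\ss,\w\big)(t^{-1})=\tau\big(X_K,\a\otimes\phi,\ss,\w\big)(t)$ already, so a statement that these two ``agree up to a factor of $\pm t^{\phi(c_1(\ss))}$'' forces $\pm t^{\phi(c_1(\ss))}=1$. What Turaev's duality actually does is compare the torsion at $\ss$ with the torsion at the \emph{conjugate} structure $\bar\ss$; since $c_1(\bar\ss)=-c_1(\ss)$, translating from $\bar\ss$ back to $\ss$ via Lemma~\ref{lem:torondata} produces a factor $\big((\det\circ(\a\otimes\phi))(c_1(\ss))\big)^{\pm 1}=t^{\pm 2\phi(c_1(\ss))}$, and the upshot (with the sign absorbed by $\dim V=2$ being even) is
\[
\tau(K,\a,\ss)(t^{-1})\;=\;t^{2\phi(c_1(\ss))}\cdot\tau(K,\a,\ss)(t),
\]
which is the formula the paper quotes from \cite{FKK11}. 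Your exponent $-\phi(c_1(\ss))$ is wrong; feeding it into \eqref{eq:tordef} yields $\tk^\a(t^{-1})=\pm t^{-3\phi(c_1(\ss))}\tk^\a(t)$, not $\tk^\a(t)$. With the correct exponent, the prefactor $t^{\phi(c_1(\ss))}$ in \eqref{eq:tordef} becomes $t^{-\phi(c_1(\ss))}$ after $t\mapsto t^{-1}$ and cancels the $t^{2\phi(c_1(\ss))}$ exactly, giving the symmetry. You should fix the exponent and make explicit the role of the conjugate $\spinc$-structure, since that is where the $2$ comes from.
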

\noindent
We will call $\tk^\a \in \F(t)$ the \emph{twisted torsion polynomial
associated to $K$} and $\a$.
\begin{proof}
  That the right-hand side of (\ref{eq:tordef}) is
  independent of the choice of $\ss$ follows easily from
  Lemma~\ref{lem:torondata} using the observation that $\det\left( (\a
      \otimes \phi ) (g) \right) = t^{2 \phi(g)}$ for $g \in \pi$
  and the properties of $c_1$ given in
  Section~\ref{section:torsion3mfd}.

  The choice of meridian $\mu$ affects the right-hand side of
  (\ref{eq:tordef}) in two ways: it is used to define the homology
  orientation $\omega$ and the homomorphism $\phi \maps \pi \to \Z$.
  The first doesn't matter by Lemma~\ref{lem:torondata}, but switching
  $\phi$ to $-\phi$ is equivalent to replacing $t$ with $t^{-1}$.
  Hence being independent of the choice of meridian is equivalent to the
  final claim that $\tk^\a$ is symmetric.

  Now any $\SLF$-representation preserves the bilinear form on
  $\F^2$ given by $(v,w)\mapsto \det(v\,w)$.  Using this observation
  it is shown in \cite[Theorem~7.3]{FKK11}, generalizing
  \cite[Corollary~3.4]{HSW10} and building on work of Turaev
  \cite{Tu86,Tu90}, that in our context we have
  \[
  \tau(K, \alpha, \ss)\left(t^{-1}\right) = t^{2 \phi(c_1(\ss) )} \cdot \tau(K,\a,\ss)
  \]
  which establishes the symmetry $\tk^\a$ and hence the theorem.
\end{proof}
While in general $\tk^\a$ is a rational function, it is frequently a
Laurent polynomial or can be computed in terms of the ordinary Alexander
polynomial $\Delta_K$.
\begin{theorem}\label{thm:ktm05}
Let $K$ be a knot in a \QHS\ and let $\a:\pi_K\to \SLF$ be a representation.
\bn
\item \label{item:irred}
  If $\a$ is irreducible, then $\tk^\a$ lies in $\F\tpm$.
\item \label{item:redone}
  If $\a$ is reducible, then $\tk^\a = \tk\myupbeta$ where
  $\beta$ is the diagonal part of $\a$, i.e.~a diagonal representation
  where $\tr\big(\b(g)\big) = \tr\big(\a(g)\big)$ for all $g \in \pi$.
\item \label{item:redtwo}
  If $\a$ is reducible and factors through $H_1(X_K; \Z)/(\mathrm{torsion})$ then
  \[ \T_K^\a(t)=\frac{\Delta_K(zt)\cdot
    \Delta_K(z^{-1}t)}{t-(z+z^{-1})+t^{-1}} \]
  where  $z,z^{-1}$ are the eigenvalues of $\a(\mu_K)$ and $\Delta_K$ is
  the symmetrized Alexander polynomial.
\en
\end{theorem}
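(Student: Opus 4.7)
The plan is to prove the three parts sequentially, with multiplicativity of Reidemeister torsion as the common thread.

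For part~\ref{item:irred}, the goal is to show that the a~priori rational function $\T_K^\alpha$ is actually a Laurent polynomial. I would verify that $H_*^{\alpha\otimes\phi}\bigl(X_K; \F(t)^2\bigr)$ vanishes entirely and that the twisted Alexander module $H_1^{\alpha\otimes\phi}\bigl(X_K; \F[t^{\pm 1}]^2\bigr)$ is a finitely generated torsion module whose order ideal is generated by a Laurent polynomial. The first reduction comes from a long exact sequence argument for the pair $(X_K, \partial X_K)$, where irreducibility of $\alpha$ is used precisely to control the twisted boundary homology (in particular, to rule out the peripheral subgroup contributing trivial invariants over $\F(t)$). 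Equivalently, using a deficiency-one presentation of $\pi_K$, Wada's formula expresses the torsion as a ratio of determinants from the twisted Fox Jacobian, and the standard argument of Kitano--Morifuji \cite{KtM05} shows that irreducibility forces the denominator to divide the numerator, yielding an element of $\F[t^{\pm 1}]$.

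For part~\ref{item:redone}, a reducible $\alpha: \pi_K \to \SLF$ is conjugate to an upper-triangular representation, producing a short exact sequence of $\Z[\pi_K]$-modules
\[
0 \to V_{\chi_1} \to \F^2 \to V_{\chi_2} \to 0,
\]
where $V_{\chi_i}$ is $\F$ with $\pi_K$-action via the diagonal character $\chi_i$. Tensoring with the chain complex of the universal cover yields a short exact sequence of twisted cellular chain complexes, and multiplicativity of Reidemeister torsion (together with compatibility of the Euler structures and the homology orientation) gives $\tau(K, \alpha, \ss) = \tau(K, \chi_1, \ss) \cdot \tau(K, \chi_2, \ss)$. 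The diagonal representation $\beta$ admits the analogous split short exact sequence with the same two characters, so the same argument yields $\T_K^\alpha = \T_K^\beta$, once the normalizing factor $t^{\phi(c_1(\ss))}$ of \eqref{eq:tordef} is checked to be compatible on both sides.

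For part~\ref{item:redtwo}, I would combine part~\ref{item:redone} with a direct computation of each one-dimensional torsion. Since $\alpha$ factors through $H_1(X_K;\Z)/\mathrm{torsion}$, the diagonal characters are $\chi_\pm(g) = z^{\pm\phi(g)}$, and $\chi_\pm \otimes t^\phi$ sends $g$ to $(z^{\pm 1} t)^{\phi(g)}$. The standard formula for the torsion of a knot exterior with one-dimensional coefficients then gives, up to the normalization in \eqref{eq:tordef},
\[
\T_K^{\chi_\pm}(t) = \frac{\Delta_K(z^{\pm 1} t)}{z^{\pm 1} t - 1}.
\]
Multiplying and using $(zt - 1)(z^{-1} t - 1) = t^2 - (z + z^{-1}) t + 1$, then dividing by $t$ to match the symmetric normalization of $\Delta_K$ and to account for the factor $t^{\phi(c_1(\ss))}$, yields the claimed formula. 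The main obstacle throughout is bookkeeping: tracking Euler structures, the symmetric normalization of $\Delta_K$, and the twist factor $t^{\phi(c_1(\ss))}$ so that the signs and units cancel correctly in the final identity. The underlying algebra (multiplicativity of torsion and the classical formula for the untwisted torsion of a knot exterior) is routine; it is only matching the conventions set up in Section~\ref{section:twisted} that requires care.
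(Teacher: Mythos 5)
Your proposal is correct in outline but takes a genuinely different route in parts~(2) and~(3), and it mischaracterizes the key mechanism in part~(1).

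For part~(1), the paper does \emph{not} argue by divisibility of the denominator into the numerator, nor does it invoke a long exact sequence for the pair $(X_K, \partial X_K)$. The actual Kitano--Morifuji argument (reproduced in the paper) is more direct: since $\alpha$ is irreducible, one can find $g \in [\pi_K, \pi_K]$ with $\tr\big(\alpha(g)\big) \neq 2$; taking a presentation of $\pi_K$ in which $g$ is a generator and applying Proposition~\ref{prop:fox} with $x_i = g$, the crucial point is that $\phi(g) = 0$, so the denominator $\det\left((\alpha\otimes\phi)(\gbar - 1)\right) = \det\left(\alpha(g)^{-1} - I\right) = 2 - \tr\big(\alpha(g)\big)$ lies in $\F^\times$ --- it has no $t$-dependence at all. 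The conclusion $\tk^\alpha \in \F\tpm$ is then immediate. Saying that ``irreducibility forces the denominator to divide the numerator'' gets the logic backwards: that divisibility is a \emph{consequence} of $\tk^\alpha$ being a Laurent polynomial (once established, any valid denominator divides its numerator), not the mechanism of the proof. The LES approach would be an unnecessary detour.

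For part~(2), your multiplicativity argument via the short exact sequence $0 \to V_{\chi_1} \to \F^2 \to V_{\chi_2} \to 0$ is a genuinely different and arguably more conceptual route than the paper's, which simply observes (via the Fox calculus formula~(\ref{equ:comptau})) that for an upper-triangular $\alpha$ the resulting matrices are block upper-triangular, and block upper-triangular determinants see only the diagonal blocks, so $\tk^\alpha = \tk\myupbeta$ falls out of the formula directly. Your route buys conceptual clarity at the cost of having to check: acyclicity over $\F(t)$ of all three complexes so the connecting long exact sequence contributes trivially (this holds since a nontrivial character twisted by $t^\phi$ is never trivial over $\F(t)$, as $\phi(\mu_K) \neq 0$); compatibility of the lexicographically ordered Euler-lift bases with the filtration; and the sign in the multiplicativity formula. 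You acknowledge this bookkeeping but do not carry it out. For part~(3), the paper defers to ``a straightforward calculation'' with~(\ref{equ:comptau}); your explicit reduction to one-dimensional torsions via part~(2) is a reasonable way to organize it, though note that $\T_K^{\chi_\pm}$ is not literally defined in the paper's framework (since $\chi_\pm$ does not land in $\SLF$), so you would have to work with the unnormalized $\tau(K,\chi_\pm,\ss)$ and apply the normalization $t^{\phi(c_1(\ss))}$ only once at the end, as you implicitly intend. The stray factor of $t$ you note between $(zt-1)(z^{-1}t-1)$ and the symmetric denominator $t - (z+z^{-1}) + t^{-1}$ is indeed absorbed by that normalization.
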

When the ambient manifold $Y$ is an \ZHS, then the torsion polynomial
of any reducible representation $\alpha$ to $\SLF$ can be computed by
combining (\ref{item:redone}) and (\ref{item:redtwo}); when $H_1(Y;
\Z)$ is finite but nontrivial, then $\tk^\a$ is the product of the
torsion polynomials of two 1-dimensional representations, but may not
be directly related to $\Delta_K$.
\begin{proof}
  Part (\ref{item:irred}) is due to Kitano and Morifuji \cite{KtM05}
  and is seen as follows. Since $\a$ is irreducible, there is a $g \in
  [\pi, \pi]$ so that $\a(g)$ does not have trace 2 (see e.g.~Lemma
  1.5.1 of \cite{CS83} or the first part of the proof of Theorem 1.1
  of \cite{KtM05}).  Then take a presentation of $\pi$ where $g$ is a
  generator and apply Proposition~\ref{prop:fox} below with $x_i = g$;
  since $\phi(g) = 0$ and $\tr\big(\a(g)\big) \neq 2$ the denominator
  in (\ref{equ:comptau}) lies in $\F^\times$ and hence $\tk^\a$ is in
  $\F \tpm$.

  For Part (\ref{item:redone}), conjugate $\a$ so that it is upper-diagonal
  \[
  \a(g) = \mysmallmatrix{a(g)}{b(g)}{0}{a(g)^{-1}} \mtext{for all $g
    \in \pi$.}
  \]
  The diagonal part of $\a$ is the representation $\b$ given by $g \mapsto
  \mysmallmatrix{a(g)}{0}{0}{a(g)^{-1}}$.  It is easy to see,
  for instance by using (\ref{equ:comptau}), that $\tk^\a = \tk\myupbeta$.  Finally, part
  (\ref{item:redtwo}) follows from a straightforward calculation with
  (\ref{equ:comptau}), see e.g. \cite{Tu01,Tu02}.
\end{proof}

\subsection{Calculation of torsion polynomials using  Fox calculus}  \label{section:foxcalc}

Suppose we are given a knot $K$ in a \QHS\ and a representation
$\a\colon \pi_1(X_K) \to \SLF$.  In this section, we give a
simple method for computing $\tk^\a$.  As usual, we write $\pi \assign
\pi_1(X_K)$ and $\phi = \phi_K$.  We can extend the group homomorphism
$\a\otimes \phi\colon \pi\to \gl(2,\F\tpm)$ to a ring homomorphism
$\Z[\pi]\to M(2,\F\tpm)$ which we also denote by $\a\otimes \phi$.  Given
a $k\times l$--matrix $A=(a_{ij})$ over $\Z[\pi]$, we denote by
$(\a\otimes \phi)(A)$ the $2k\times 2l$--matrix obtained from $A$ by
replacing each entry $a_{ij}$ by the $2\times 2$--matrix $(\a\otimes
\phi)(a_{ij})$.

Now let $F=\ll x_1,\dots,x_{n}\rr$ be the free group on $n$ generators. By Fox
(see \cite{Fo53,Fo54,CF63} and also \cite[Section~6]{Ha05})
 there exists for each $x_i$ a \emph{Fox derivative}
\[ \frac{\partial}{\partial x_i}: F\to \Z[F] \]
with the following two properties:
\[
 \frac{\partial x_j}{\partial x_i} =\delta_{ij}  \mtext{and}
\frac{\partial (uv)}{\partial x_i} =\frac{\partial u}{\partial
  x_i}+u\frac{\partial v}{\partial x_i} \ \mbox{ for all $u,v\in F$.}
\]
We also need the involution of $\Z[F]$ which sends $g \in F$ to
$g^{-1}$ and respects addition (this is not an algebra automorphism
since it induces an anti\hyp homomorphism for multiplication).  We
denote the image of $a \in \Z[F]$ under this map by $\abar$, and if
$A$ is a matrix over $\Z[F]$ then $\Abar$ denotes the result of
applying this map to each entry.

The following allows for the efficient calculation of $\tk^\a$, since
$\pi$ always has such a presentation (e.g.~if $K$ is a knot in $S^3$
one can use a Wirtinger presentation).
 \begin{proposition}\label{prop:fox}
Let $K$ be a knot in a \QHS, and $\ll x_1,\dots,x_{n} \, |\, r_1,\dots,r_{n-1}\rr$
be a presentation of $\pi_K$ of deficiency one.  Let $A$ be the $n\times
(n-1)$--matrix with entries $a_{ij}=\frac{\partial r_j}{\partial x_i}$.
Fix a generator $x_i$ and consider the matrix $A_i$
obtained from $A$ by deleting the \ith\ row.  Then there exists an
$l\in \Z$ so that for every even-dimensional representation $\a \maps \pi \to \GL(V)$ one has
\begin{equation}
 \label{equ:comptau}
\T_K^\a(t)=t^l\cdot \frac{\det\left((\a\otimes
    \phi)\big(\Abar_i\big)\right)}{\det\left((\a\otimes \phi)\big(\xbar_i-1\big)\right)}
\end{equation}
whenever the denominator is nonzero.
\end{proposition}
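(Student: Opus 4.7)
The plan is a direct chain-level calculation: use the presentation to write down the cellular chain complex of the associated $2$-complex $Y$, twist it by $\a \otimes \phi$, compute the torsion of the resulting length-$2$ complex by the standard determinantal formula, and then compare with $\tk^\a$ up to the normalization absorbed in the free factor $t^l$.

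Let $Y$ be the presentation $2$-complex: one $0$-cell, $1$-cells $x_1,\dots,x_n$, and $2$-cells attached along $r_1,\dots,r_{n-1}$. The cellular chain complex of $\widetilde{Y}$ is
\[
0 \longrightarrow \Z[\pi]^{n-1} \stackrel{\partial_2}{\longrightarrow} \Z[\pi]^{n} \stackrel{\partial_1}{\longrightarrow} \Z[\pi] \longrightarrow 0,
\]
where $\partial_1$ sends the $j$-th basis element to $x_j-1$ and $\partial_2$ is given by the Fox Jacobian $A=\left(\partial r_j/\partial x_i\right)$. Tensoring with $V\tpm$ on the right (after the standard left-to-right conversion $c \cdot g = g^{-1}c$) yields the twisted complex $C_*^\a(Y;V\tpm)$ whose boundary matrices are $D_2 = (\a\otimes\phi)(\overline{A})$ and $D_1 = (\a\otimes\phi)(\overline{x}_j-1)$. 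Writing $d = \dim V$, this is a complex of free $\F\tpm$-modules of ranks $d(n-1)$, $dn$, and $d$.

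Suppose the denominator $\det\bigl((\a\otimes\phi)(\overline{x}_i-1)\bigr)$ is nonzero. Then the $d\times d$ block of $D_1$ corresponding to $x_i$ is invertible over $\F(t)$, which forces acyclicity in degree $0$ and identifies $V\tpm$ with a direct summand of $C_1$. The complementary square matrix $(\a\otimes\phi)(\overline{A}_i)$ has size $d(n-1)$, and the standard determinantal formula for the torsion of a based acyclic length-$2$ chain complex (Turaev \cite{Tu01,Tu02}) gives
\[
\tau\bigl(C_*^\a(Y;V\tpm)\bigr) \;=\; \pm\, \frac{\det\bigl((\a\otimes\phi)(\overline{A}_i)\bigr)}{\det\bigl((\a\otimes\phi)(\overline{x}_i-1)\bigr)}.
\]

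It remains to pass from this to $\tk^\a$. The $2$-complex $Y$ is simple-homotopy equivalent, possibly after wedging with finitely many $2$-spheres, to a spine of $X_K$; each such move changes the torsion only by a unit in $\F\tpm$, hence by a monomial $\pm t^{l_0}$. Combining this with the normalization in (\ref{eq:tordef}), which absorbs the choice of Euler structure, homology orientation, and cell ordering into further powers of $t$, and using the evenness of $d$ together with Lemma~\ref{lem:torondata} to kill sign ambiguities, yields the claimed identity for some $l \in \Z$. The main obstacle is the bookkeeping: one must track the bars arising from the right-module conversion, the alignment of cell orderings with the Fox derivative indexing, and the fact that every sign and every power of $t$ picked up along the way is covered either by the normalization in (\ref{eq:tordef}) or by the free factor $t^l$ in the statement. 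Once these conventions are in hand, the computation is the standard torsion calculation for a two-term acyclic complex of free modules.
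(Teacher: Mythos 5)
Your proof follows essentially the same route as the paper's: set up the presentation $2$-complex, write the cellular chain complex of its universal cover, twist by $\a\otimes\phi$, invoke Turaev's determinantal formula for the torsion of a based acyclic length-two complex, and then relate to $X_K$ via simple homotopy equivalence, with the exponent $l$ absorbing the normalization from the definition of $\tk^\a$ in equation~(\ref{eq:tordef}). The paper organizes this by first proving a cleaner intermediate statement (Proposition~\ref{prop:fox2}) computing $\tau(X_K,\beta,\ss)$ for a fixed $\spinc$-structure, but the substance is the same.

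Two points to tighten. First, your hedge ``simple-homotopy equivalent, possibly after wedging with finitely many $2$-spheres\dots; each such move changes the torsion only by a unit'' is false as stated: wedging on a $2$-sphere adds a free $\Z[\pi]$-summand to $C_2$ with no compensating summand elsewhere, so the twisted complex becomes non-acyclic and the torsion jumps to $0$, not to a unit multiple. In fact the hedge is unnecessary: $\chi(Y)=1-n+(n-1)=0=\chi(X_K)$, so no spheres can appear, and once one knows $Y\simeq X_K$ the paper's argument via triviality of the Whitehead group of $\pi_K$ \cite{Wal78} upgrades this to a simple homotopy equivalence. You should either drop the sphere remark or replace it with the Whitehead-group argument.

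Second, the statement claims equation~(\ref{equ:comptau}) holds whenever the denominator is nonzero, which includes the case where $\det\bigl((\a\otimes\phi)(\Abar_i)\bigr)=0$. Your argument only treats the case where the numerator is also nonzero, in which the complex is acyclic and Turaev's formula applies directly. The paper handles the degenerate case separately: a vector in $\ker\bigl((\a\otimes\phi)(\Abar_i)\bigr)$ lies in $\ker\bigl((\a\otimes\phi)(\Abar)\bigr)$ because the $x_i$-block of $\partial_1$ is invertible and $\partial_1\partial_2=0$, so $H_2\ne 0$, the complex is not acyclic, and $\tau=0$ by definition, matching the vanishing right-hand side. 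This short paragraph should be added to make the ``whenever the denominator is nonzero'' clause actually earned.
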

\noindent
The same formula also holds, up to a sign, when $\dim(V)$ is odd.  An
easy way to ensure nonzero denominator in (\ref{equ:comptau}) is to
choose an $x_i$ where $\phi(x_i) \neq 0$; then $\det\left((\a\otimes
  \phi)\big(\xbar_i - 1\big)\right)$ is essentially the characteristic
polynomial of $\alpha(x_i)^{-1}$ and hence nonzero.

\begin{remark}
Wada's invariant (see \cite{Wad94}) is defined to be 
\[ \frac{\det\left((\a\otimes
    \phi)\big(A_i^t\big)\right)}{\det\left((\a\otimes
    \phi)\big(x_i-1\big)\right)}.\] In \cite[p.~53]{FV10} it is
erroneously claimed that, up to multiplication by a power of $t$, the
torsion polynomial $\T_K^\a$ agrees with Wada's invariant. Since there
seems to be some confusion in the literature regarding the precise
relationship between twisted torsion and Wada's invariant, we
discuss it in detail in Section~\ref{sec:conn-wada}.  In that
section, we will also see that for representations into $\SLF$, Wada's
invariant does in fact agree with $\T_K^\a(t)$.  In particular the
invariant studied by Kim and Morifuji \cite{KM10} agrees with
$\T_K^\a(t)$.
\end{remark}

Proposition~\ref{prop:fox} is an immediate consequence of:
\begin{proposition}\label{prop:fox2} Let $K, \pi, A$ be
  as above.  For each generator $x_i$, there is an $\ss \in
  \spinc(X_K)$ so that for every even-dimensional representation
  $\beta \maps \pi \to \GL(V)$ one has
  \begin{equation}\label{equ:comptau2}
  \tau(X_K, \beta, \ss) =
  \frac{\det\big(\b(\Abar_i)\big)}{\det\left(\b(\xbar_i - 1)\right)}
  \mtext{whenever the denominator is nonzero.  }
  \end{equation}
\end{proposition}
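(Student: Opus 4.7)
The plan is to build a CW complex from the presentation, compute its twisted cellular chain complex via Fox calculus, and then apply the standard determinant formula for the torsion of an acyclic two-term based complex. The bulk of the argument is bookkeeping of the bar conventions and Euler structures; the computational content is already visible from the shape of the chain complex.

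First, I would form the presentation $2$--complex $P$ with a single $0$--cell $*$, a $1$--cell $e_i$ for each generator $x_i$, and a $2$--cell $f_j$ attached along $r_j$. Since $X_K$ is a compact aspherical $3$--manifold with toroidal boundary and $\pi_K$ admits the given deficiency--one presentation, a standard argument (see e.g.\ \cite{Tu01,Tu02}) shows that $P$ is simple-homotopy equivalent to $X_K$. Under this equivalence, any choice of lifts $\tilde *$, $\tilde e_i$, $\tilde f_j$ in $\tilde P$ corresponds to an Euler structure on $X_K$ and hence to some $\ss \in \spinc(X_K)$; the proposition asserts only the existence of such an $\ss$, so we are free to use this one.

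Next, the classical Fox calculus formulas describe the boundaries of $C_*(\tilde P)$ as left $\Z[\pi]$--modules:
\begin{align*}
\partial_1 \tilde e_i &= (x_i-1)\cdot\tilde *, \\
\partial_2 \tilde f_j &= \sum_{i=1}^n \tfrac{\partial r_j}{\partial x_i}\cdot \tilde e_i.
\end{align*}
Passing to the right $\Z[\pi]$--module structure via $c\cdot g := g^{-1}\cdot c$ introduces the bar involution, so after tensoring with $V$ via $\b$ the twisted chain complex reads
\[
0\ \longrightarrow\ V^{n-1}\ \xrightarrow{\ \b(\Abar)\ }\ V^n\ \xrightarrow{\ \bigl(\b(\xbar_k-1)\bigr)_k\ }\ V\ \longrightarrow\ 0.
\]
Now apply the standard torsion formula for an acyclic two-term based complex: under the hypothesis that $\b(\xbar_i-1)$ is invertible, one uses it to trivialize the $i$-th block of $V^n$, reducing $\partial_2$ to the square matrix $\b(\Abar_i)$ obtained by deleting the $i$-th block row of $\b(\Abar)$. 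The torsion is then $\det\b(\Abar_i)/\det\b(\xbar_i-1)$ up to a sign which becomes $+1$ for even-dimensional $V$, yielding the claimed formula.

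The main obstacle is justifying the simple-homotopy equivalence $P \simeq X_K$ carefully enough that the Euler structure on $P$ produced by our chosen lifts really does come from one on $X_K$; in the knot exterior setting this goes back to work of Waldhausen and is treated in detail by Turaev. The other routine subtlety is tracking the bar involution arising from the right-module convention, which is transparent once one writes out a single tensor $(g\tilde e_i)\otimes v = \tilde e_i \otimes \b(g^{-1})v$ explicitly.
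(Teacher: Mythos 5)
Your argument follows essentially the same route the paper takes: build the presentation $2$--complex, observe that triviality of the Whitehead group upgrades the homotopy equivalence with $X_K$ to a simple-homotopy equivalence (so the lift-determined Euler structure on the $2$--complex corresponds to some $\ss \in \spinc(X_K)$), write out the twisted chain complex via Fox calculus with the bar involution coming from the left/right module switch, and apply Turaev's determinant formula, noting that the sign is $+1$ because $\dim V$ is even.

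There is one genuine gap. The proposition asserts the formula \emph{whenever the denominator is nonzero}, which includes the situation where $\det\bigl(\b(\xbar_i-1)\bigr)\neq 0$ but $\det\bigl(\b(\Abar_i)\bigr)=0$. Your step ``apply the standard torsion formula for an acyclic \dots complex'' only covers the case where both determinants are nonzero, because only then is the complex acyclic and is $\b(\Abar_i)$ usable to trivialize a block. When $\det\bigl(\b(\Abar_i)\bigr)=0$ you must instead argue that the complex fails to be acyclic, so that $\tau(X_K,\b,\ss)=0$ by definition and the right-hand side is also $0$. The paper does exactly this: a vector $v\in V^{n-1}$ in the kernel of $\b(\Abar_i)$ must, because $\partial_1\circ\partial_2=0$ and $\b(\xbar_i-1)$ is invertible, actually lie in the kernel of $\b(\Abar)$, forcing $H_2\ne 0$. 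Without this case your proof establishes the formula only under the stronger hypothesis that both determinants are nonzero, which is not what the proposition claims.
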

\noindent
The homology orientation $\omega$ is suppressed in
(\ref{equ:comptau2}) because by Lemma~\ref{lem:torondata} it doesn't
affect $\tau$ as $\dim(V)$ is even.
\begin{proof}[Proof of Proposition~\ref{prop:fox2}]
  Let $X$ be the canonical 2--complex corresponding to the
  presentation of $\pi$, i.e.~$X$ has one cell of dimension zero, $n$
  cells of dimension one and $n-1$ cells of dimension two.  As
  the Whitehead group of $\pi$ is trivial \cite{Wal78}, it
  follows that $X$ is simple-homotopy equivalent to any other
  CW-decomposition of $X_K$; in particular, it is simple-homotopy
  equivalent to a triangulation. By standard results (see
  e.g.~\cite[Section~8]{Tu01}) we can now use $X$ to calculate the
  torsion of $X_K$.

  Consider the Euler structure $e$ for $X$ which is given by picking
  an arbitrary lift of the vertex of $X$ to the universal cover
  $\Xtilde$, and then taking the lift of each $x_i$ which starts at
  this basepoint.  Reading out the words $r_j$ in $x_1,\dots,x_n$
  starting at the basepoint gives a canonical lift for the 2-cells
  corresponding to the relators.  With respect to this basing, the
  chain complex $C_*(\Xtilde)$ is isomorphic to the following chain
  complex
  \[
  0\to \Z[\pi]^{n-1}
  \xrightarrow{\partial_2}\Z[\pi]^n\xrightarrow{\partial_1}\Z[\pi]\to 0.
  \]
  The bases of $C_2(\Xtilde)$ and $C_1(\Xtilde)$ are abusively denoted
  by $\{r_j\}$ and $\{x_i\}$, and the basis of $C_0(\Xtilde)$ is the lifted
  basepoint $b$.  Thus
  \[
  \partial_2( r_j ) = \sum_i \frac{\partial
    r_j}{\partial x_i} x_i = \sum_i a_{ij} x_i \mtext{and} \partial_1( x_i )
  = (x_i - 1) b.
  \]

  Now fix a basis $\{v_k\}$ for $V$.  If we then view elements $v \in
  V$ as vertical vectors and $\beta(g)$ as a matrix, the left
  $\Z[\pi]$-module structure on $V$ is given by $g \cdot v = \beta(g)
  v$.  Thus in the complex $C_*\myupbeta(X;V) = C_*(\Xtilde)
  \otimes_{\Z[\pi]} V$ we have
  \begin{align*}
  \partial_2( r_j \otimes v_k) &= \sum_i \left( a_{ij} x_i \otimes v_k
  \right) = \sum_i \left( x_i \cdot \abar_{ij} \otimes v_k
  \right) = \sum_i \left( x_i \otimes \abar_{ij} \cdot v_k \right) \\
  &= \sum_i \left( x_i \otimes  \beta(\abar_{ij}) v_k \right).
  \end{align*}
  Thus with the basis ordering conventions of
  Section~\ref{section:twitau}, the twisted chain complex $C_*\myupbeta(X;V)$ is given by
    \begin{equation}\label{eq:twistcomplex}
  0\to V^{n-1}
  \xrightarrow{ \beta(\Abar)} V^n\xrightarrow{\left( \beta(\xbar_1 -
    1), \, \dots \, , \beta(\xbar_n -1)\right)} V\to 0,
  \end{equation}
  where as usual matrices act on the left of vertical vectors.

From now on, we assume that $\det\left(\b(\xbar_i - 1)\right) \neq 0$
as otherwise there is nothing to prove.  First, consider the case when
$\det\big(\beta(\Abar_i)\big) = 0$.  We claim in this case that
$C_*\myupbeta(X;V)$ is not acyclic, and thus (\ref{equ:comptau2}) holds by
the definition of $\tau$.  Consider any $v \in V^{n-1}$ which is in
the kernel of $\Abar_i$; because $\b(\xbar_i - 1)$ is non-singular, the
fact that $\partial^2 = 0$ forces $v$ to be in the kernel of $\Abar$.
Hence $H_2\myupbeta(X;V) \neq 0$ as needed.

When instead $\det\big(\beta(\Abar_i)\big) \neq 0$, then both
boundary maps in (\ref{eq:twistcomplex}) have full rank and hence the
complex is acyclic.   Following Section 2.2 of \cite{Tu01} we can use
a suitable matrix $\tau$-chain to compute the desired torsion.
Specifically \cite[Theorem~2.2]{Tu01} gives
\begin{equation}\label{eq:nearlydone}
\tau(X, \beta, e) = \frac{\det\big(\b(\Abar_i)\big)}{\det\left(\b(\xbar_i - 1)\right)}.
\end{equation}
Here, we are using that $\dim(V)$ is even, which forces the sign
discussed in \cite[Remark~2.4]{Tu01} to be positive.  Also, the
convention of \cite{Tu01} is to record a basis as the rows of a matrix,
whereas we use the columns; this is irrelevant since the determinant
is transpose invariant.   Given (\ref{eq:nearlydone}) if we take
$\ss$ be the $\spinc$--structure corresponding to $e$, we have
established (\ref{equ:comptau2}).
\end{proof}

\subsection{Connection to Wada's invariant}
\label{sec:conn-wada}

We now explain why the formula (\ref{equ:comptau}) differs from the
one used to define Wada's invariant \cite{Wad94}, and how Wada's
invariant also arises as a torsion of a suitable chain complex. To
start, suppose we have a representation $\beta \maps \pi \to \GL(d,
\F)$, where as usual $\pi$ is the fundamental group of a knot
exterior.  The representation $\beta$ makes $V \assign \F^d$ into both
a left and a right $\Z[\pi]$--module.  The left module $\leftFn$ is
defined by $g \cdot v \assign \beta(g) v$ where $v \in V$ is viewed as
a column vector, and the right module $\rightFn$ is defined by $v \cdot
g \assign v \beta(g)$ where now $v \in V$ is viewed as a row vector.

Given a left $\Z[\pi]$--module $W$ we denote by $W\op$ the right
$\Z[\pi]$--module given by $w\cdot f \assign \ol{f}\cdot w$. Similarly we can
define a left module $W\op$ for a given right $\Z[\pi]$--module $W$.
In Section~\ref{section:twisted}, we started with the \emph{left}
modules $C_*(\Xtilde)$ and $\leftFn$ and used the chain complex
\[
  C_*\myupbeta (\Xtilde, \F^d) \assign C_*(\Xtilde)\op \otimes_{\Z[\pi]} \, \leftFn
\]
when defining the torsion.

One could instead consider the chain complex
\[
\rightFn  \otimes_{\Z[\pi]} C_*(\Xtilde).
\]
Here, if $\{v_k\}$ is a basis for $V$ and $\{ \ctil_j
\}$ is a $\Z[\pi]$--basis for $C_*(\Xtilde)$, then we endow $\rightFn \otimes_{\Z[\pi]}
C_*(\Xtilde)$ with the basis $\{ v_k \otimes \ctil_j \}$ ordered \emph{reverse}
lexicographically, i.e.~$v_k \otimes \ctil_j < v_{k'} \otimes
\ctil_{j'}$ if either $j < j'$ or both $j = j'$ and $k < k'$.

Suppose now we want to compute the torsion of $\rightFn \otimes
C_*(\Xtilde)$ using the setup of the proof of Theorem~\ref{prop:fox2}.
Then we have
  \begin{align*}
  \partial_2(v_k \otimes r_j) &= \sum_i \left( v_k \otimes a_{ij} x_i \right)
  = \sum_i \left( v_k \otimes a_{ij} \cdot x_i \right)
   = \sum_i \left( v_k \cdot a_{ij} \otimes x_i \right) \\
  &= \sum_i \left( v_k \, \beta(\a_{ij}) \otimes  x_i \right).
\end{align*}
Since we are focusing on a right module $\rightFn$, it is natural to
write the matrices for the boundary maps in  $\rightFn  \otimes_{\Z[\pi]} C_*(\Xtilde)$
as matrices which \emph{act on the right of row vectors}.   With these
conventions one gets the chain complex
\[
0\to V^{n-1}
  \xrightarrow{ \beta(A^t)} V^n\xrightarrow{\left( \beta(x_1 -
    1), \, \dots \, , \beta(x_n -1)\right)^t} V\to 0
\]
where here $A^t$ denotes the transpose of $A$, and so $A^t$ is an $(n-1)
\times n$ matrix over $\Z[\pi]$.   As in the proof of
Theorem~\ref{prop:fox2}, in the generic case \cite[Theorem~2.2]{Tu01}
gives that
\[
\tau\left(\rightFn  \otimes_{\Z[\pi]} C_*(\Xtilde)\right) = \frac{\det\big(\b(A_i^t)\big)}{\det\left(\b(x_i - 1)\right)}.
\]
Up to the sign of the denominator, this is precisely the formula for
Wada's invariant given in \cite{Wad94}.

It's important to note here that $\beta(A^t)$ is not necessarily the
same as $\big(\beta(A) \big)^t$, and hence Wada's invariant may differ
from our $\tau(X, \beta)$.  However, note that there exists a
canonical isomorphism
\[ \ba{rcl} \rightFn \otimes_{\Z[\pi]} C_*(\Xtilde) &\to &
C_*(\Xtilde)\op \otimes_{\Z[\pi]} \big(\rightFn\big)\op\\
v\otimes \s&\mapsto & \s\otimes v\ea \] which moreover respects the
ordered bases.  Thus these chain complexes have the same
torsion invariant. It's easy to see that the left module
$(\rightFn)\op$ is isomorphic to $\leftFna$ where $\beta^* \maps \pi \to
\GL(d, \F)$ is the representation given by $\b^*(g) \assign
\big(\b(g)^{-1}\big)^t$.  Thus Wada's invariant for $\beta$ is our torsion
$\tau(X, \beta^*)$.

Our focus in this paper is on $\beta$ of the form $\alpha \otimes
\phi$ where $\alpha \maps \pi \to \SLF$ and $\phi \maps \pi \to \Z$ is
the usual epimorphism.  Note that $\alpha^*$ is conjugate to
$\alpha$ (see e.g. \cite{HSW10}) and hence $(\alpha \otimes \phi)^*$ is conjugate to $\alpha^*
\otimes (-\phi)$.  Since we argued in Section~\ref{section:sl2torsion}
that $\T^\a$ is independent of the choice of $\phi$, it follows that
in this case our $\T^\a$ is exactly Wada's invariant for $\alpha$.

\section{Twisted torsion of cyclic covers}
\label{sec:cyclic}

As usual, let $K$ be a knot in a \QHS\ with exterior $X$ and
fundamental group $\pi$.  For an irreducible representation $\alpha
\maps \pi \to \SL$, in this section we relate the torsion polynomial
$\tk^\a$ to a sequence of $\C$-valued torsions of finite
cyclic covers of $X$.  We show that the latter determines the former,
and will use this connection in Section~\ref{section:hyp} to prove
nonvanishing of the hyperbolic torsion polynomial.

To start, pick an orientation of $K$ to fix the homomorphism $\phi
\maps \pi \to \Z$.  For each $m \in \N$, we denote by $X_m$ the
$m$-fold cyclic cover corresponding to $\pi_m \assign \phi^{-1}(m\Z)$.
We denote by $\a_m$ the restriction of $\a$ to $\pi_m
= \pi_1(X_m)$.   Since the dimension is even and the image of $\a_m$
lies in $\SL$, it follows from Lemma \ref{lem:torondata}
that the torsion $\tau(X_m,\a,\ss,\w) \in \C$ does not depend on the choice
of $\spinc$-structure or homology orientation; therefore we denote it by $\tau(X_m,\a_m)$.  We also let $\rootsofunity{m}$ be the
set of all \mth\ roots of unity in $\C$.
The first result of this section is the following (see \cite[Corollary~27]{DY09} for a related result).
\begin{theorem}\label{thm:tauxm}
  Let $K$ be a knot in a \QHS\ with exterior $X$ and fundamental group
  $\pi$.  Let $\a\colon \pi \to \mbox{SL}(2,\C)$ be an irreducible
  representation.  Then for every $m \in \N$ we have
  \[
  \produnity \T_K^\a(\zeta)=\tau(X_m,\a_m).
  \]
\end{theorem}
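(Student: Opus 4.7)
The plan is to identify the evaluation $\T_K^\alpha(\zeta)$ with a $\C$-valued torsion of $X$ itself with twisted coefficients, and then to recognize $\tau(X_m, \alpha_m)$ as the torsion of a chain complex on $X$ with induced coefficients, which decomposes as a direct sum over the characters of the deck group $\Z/m$.

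First I would set up the bridge between values of $\T_K^\alpha$ and twisted torsions of $X$. For each $\zeta \in \rootsofunity{m}$, let $\phi_\zeta \colon \pi \to \C^\times$ be the 1-dimensional representation $g \mapsto \zeta^{\phi(g)}$ and consider the representation $\alpha \otimes \phi_\zeta \colon \pi \to \SL$. By essentially the same Fox-calculus computation as in Proposition~\ref{prop:fox2}, specializing $t$ to $\zeta$ in the chain complex over $\F[t^{\pm 1}]$ gives the twisted chain complex of $X$ with $\C^2$-coefficients via $\alpha \otimes \phi_\zeta$. The prefactor $t^{\phi(c_1(\ss))}$ in the definition of $\T_K^\alpha$ becomes $\zeta^{\phi(c_1(\ss))}$, which is a root of unity, and these prefactors will conveniently cancel out after taking the product over $\zeta \in \rootsofunity{m}$ (since $\sum_{\zeta} \phi(c_1(\ss))$ is $m$ times an integer and $\zeta^m = 1$). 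Combined with Lemma~\ref{lem:torondata}, this means
\[
\produnity \T_K^\alpha(\zeta) = \produnity \tau\bigl(X, \alpha \otimes \phi_\zeta, \ss, \omega\bigr)
\]
for any choice of $\ss$ and $\omega$ (both sides being independent of these choices since $\dim V = 2$ is even and the exponent sum is trivial modulo $m$).

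Next I would invoke the Shapiro-style relationship for covers. Choose a triangulation of $X$ and lift it to a triangulation of $X_m$; then
\[
C_*^{\alpha_m}(X_m; \C^2) \;\cong\; C_*^{\alpha \otimes \rho}\bigl(X; \C^2 \otimes \C[\Z/m]\bigr),
\]
where $\rho \colon \pi \to \GL(\C[\Z/m])$ is the regular representation of the deck group pulled back along $\phi \!\!\mod m$. Since $\C[\Z/m] \cong \bigoplus_{\zeta \in \rootsofunity{m}} \C_\zeta$ as $\Z/m$-modules, the twisted chain complex on the right splits as a direct sum
\[
C_*^{\alpha \otimes \rho}\bigl(X; \C^2 \otimes \C[\Z/m]\bigr) \;\cong\; \sumunity C_*^{\alpha \otimes \phi_\zeta}(X; \C^2),
\]
and multiplicativity of Reidemeister torsion on direct sums with compatible bases yields
\[
\tau(X_m, \alpha_m) \;=\; \produnity \tau\bigl(X, \alpha \otimes \phi_\zeta, \ss, \omega\bigr),
\]
once we check that the lifted bases, Euler structure, and homology orientation on $X_m$ match up under the decomposition.

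Combining the two displayed identities proves the theorem. The main obstacle I expect is bookkeeping rather than geometric: verifying that the signs, Euler structures, and $\spinc$-structures on $X$ and $X_m$ are compatible under the cover-to-induced-representation correspondence, so that no hidden sign or scalar of modulus other than $1$ creeps into the product. Here the fact that $\dim V = 2$ is crucial, since it eliminates the $\sign(\tau(C_*(X;\R),c_*,h_*))^{\dim V}$ factor from the definition in Section~\ref{section:twitorsion} and makes both sides independent of $\ss$ and $\omega$ by Lemma~\ref{lem:torondata}. Irreducibility of $\alpha$ is needed only to guarantee that $\T_K^\alpha \in \C[t^{\pm 1}]$ (Theorem~\ref{thm:ktm05}(\ref{item:irred})) so that the evaluations $\T_K^\alpha(\zeta)$ make sense as complex numbers.
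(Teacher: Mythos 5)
This is essentially the paper's own proof: your Shapiro-style isomorphism $C_*^{\alpha_m}(X_m;\C^2)\cong C_*^{\alpha\otimes\rho}(X;\C^2\otimes\C[\Zm])$ is Lemma~\ref{lem:tobase}, your specialization $t\mapsto\zeta$ relating $\T_K^\alpha(\zeta)$ to $\tau(X,\alpha\otimes\phi_\zeta,\ss)$ is Lemma~\ref{lem:toreval} (the paper's $\lambda_\zeta$ is your $\phi_\zeta$), and decomposing $\C[\Zm]$ into characters and invoking multiplicativity of torsion is exactly how the paper combines the two lemmas. One small imprecision: your reason for the prefactors $\zeta^{\phi(c_1(\ss))}$ cancelling does not work, since $\zeta$ runs over distinct roots of unity and $\produnity\zeta^{\phi(c_1(\ss))}=\left((-1)^{m+1}\right)^{\phi(c_1(\ss))}$; this is $1$ because $\phi(c_1(\ss))$ is even (a compact oriented \3-manifold with torus boundary admits a spin structure, so $c_1(\ss)\in 2H_1$), a point the paper also glosses over by asserting $\prod\zeta=1$.
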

\noindent
Note here since $\a$ is irreducible the torsion polynomial $\tk^\a$ is
in $\ct$ by Theorem~\ref{thm:ktm05}(\ref{item:irred}), and so
$\tk^\a(\xi)$ is well-defined for any $\xi \in \C^\times$.  
Our proof of Theorem~\ref{thm:tauxm} is inspired by an argument of
Turaev \cite[Section~1.9]{Tu86}.  Combining
Theorem~\ref{thm:tauxm} with a (generalization of) a result of David
Fried \cite{Fr88}, we will show:
\begin{theorem}\label{thm:taux}
  If $\tau(X_m,\a_m)$ is non-zero for every $m \in \N$, then the
  $\tau(X_m,\a_m)$ determine $\T_K^\a(t)\in \C(t)$.
\end{theorem}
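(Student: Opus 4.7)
The plan is to combine the explicit formula of Theorem~\ref{thm:tauxm} with a theorem of David Fried on the recovery of a multiset of complex numbers from sequences of the form $\prod_i(\lambda_i^m-1)$. Since $\a$ is irreducible, Theorem~\ref{thm:ktm05}(\ref{item:irred}) gives $\tk^\a \in \ct$, so we may write
\[
\tk^\a(t) \;=\; c \, t^{-k} \prod_{i=1}^n (t - \lambda_i)
\]
for some $c \in \C^\times$, $k \in \Z$, $n \geq 0$, and $\lambda_i \in \C^\times$ (listed with multiplicity). Using the elementary identities $\produnity (\zeta - \lambda) = (-1)^m(\lambda^m - 1)$ and $\produnity \zeta = (-1)^{m+1}$, Theorem~\ref{thm:tauxm} then yields
\[
\tau(X_m,\a_m) \;=\; \epsilon_m \cdot c^m \prod_{i=1}^n (\lambda_i^m - 1),
\]
where $\epsilon_m \in \{\pm 1\}$ is an explicit sign determined by $m$, $n$, and $k$. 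The nonvanishing hypothesis on $\tau(X_m,\a_m)$ is equivalent to the statement that no $\lambda_i$ is a root of unity.

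The key step is to invoke Fried's theorem \cite{Fr88} (in the form refined by Hillar \cite{Hillar2005a}): any multiset $\{\lambda_1,\dots,\lambda_n\} \subset \C^\times$ disjoint from the roots of unity is uniquely determined by the sequence $\bigl(\prod_i(\lambda_i^m - 1)\bigr)_{m\geq 1}$. This recovers the zero multiset of $\tk^\a$ from the torsions $\tau(X_m,\a_m)$. The scalar $c$ is then determined, up to sign, by dividing $\tau(X_1,\a_1)$ by $\prod_i(\lambda_i - 1)$, and the integer $k$ together with the residual sign ambiguity in $c$ is pinned down uniquely by the symmetry $\tk^\a(t^{-1}) = \tk^\a(t)$ provided by Theorem~\ref{thm:Tsym}.

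The hard part is the invocation of Fried's result, whose nontrivial content is the handling of $\lambda_i$ on the unit circle (where $\lambda_i^m - 1$ oscillates rather than grows in modulus) and the disentangling of contributions from distinct eigenvalues of the same modulus. The nonvanishing hypothesis on $\tau(X_m,\a_m)$ is exactly what puts us in the regime where \cite{Fr88, Hillar2005a} apply, and once this black box is in place the proof reduces to the sign-and-normalization bookkeeping sketched above.
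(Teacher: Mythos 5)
Your high-level strategy — combine Theorem~\ref{thm:tauxm} with Fried's theorem and the symmetry from Theorem~\ref{thm:Tsym} — is the same as the paper's, but the version of Fried's theorem you invoke is false, and the order of your normalization steps is circular. The claim that ``any multiset $\{\lambda_1,\dots,\lambda_n\} \subset \C^\times$ disjoint from the roots of unity is uniquely determined by the sequence $\prod_i(\lambda_i^m - 1)$'' fails: replacing $\{\lambda_i\}$ by $\{\lambda_i^{-1}\}$ multiplies each term of the sequence by $(-1)^n(\prod_i \lambda_i)^{-m}$, which is identically $1$ whenever $n$ is even and $\prod_i\lambda_i=1$. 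For instance $\{2,2,3,1/12\}$ and $\{1/2,1/2,1/3,12\}$ contain no roots of unity but yield the same sequence of products. Fried's result as stated in Theorem~\ref{thm:fried} (and in \cite{Fr88,Hillar2005a}) requires the polynomials to be \emph{palindromic}, equivalently that the root multiset be symmetric under $\lambda\mapsto\lambda^{-1}$. That is precisely what the symmetry $\tk^\a(t)=\tk^\a(t^{-1})$ from Theorem~\ref{thm:Tsym} supplies, so the symmetry must be used \emph{before} Fried's theorem is applied, not afterward merely to ``pin down $k$ and the sign of $c$.''

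There is a second, related gap: your scheme is circular. You only know
\[
\tau(X_m,\a_m) \;=\; \epsilon_m\, c^m \prod_{i=1}^n\bigl(\lambda_i^m-1\bigr),
\]
not the bare product $\prod_i(\lambda_i^m-1)$, and extracting the latter requires $c$ and $\epsilon_m$ (hence $n$ and $k$), which you propose to determine only after the root multiset has already been recovered. The paper avoids this by using the symmetry up front to write $\tk^\a(t)=t^{-N}p(t)$ with $p\in\C[t]$ palindromic of degree $2N$; the leading coefficient is then absorbed into $p$, Theorem~\ref{thm:tauxm} becomes $\tau(X_m,\a_m)=(-1)^{(m+1)N}r_m(p)$, and Theorem~\ref{thm:fried} is applied directly to $p$. (Even in that formulation a residual sign $(-1)^{(m+1)N}$ remains for $m$ even, so one must rule out that two candidate $p$'s with $N$ of opposite parity could both fit; this can be done, e.g.\ by comparing $p_1(t^2)$ with $p_2(t^2)$, both palindromic with $N$ even. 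But the leading-coefficient circularity in your version disappears entirely once $p$ is used in place of the factored form.)
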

To state the key lemmas, we first need some notation.  We denote by
$\g_m$ the representation $\pi\to \GL\big(\C[\Zm]\big)$ which is the
composite of the epimorphism $\pi \stackrel{\phi}{\to} \Z \to \Zm$
with the regular representation of $\Zm$ on $\C[\Zm]$.  Given any $\xi
\in \C^\times$, we denote by $\lambda_\xi$ the representation $\pi \to
\GL(1,\C)$ which sends $g \in \pi$ to $\xi^{\phi(g)}$.  We first prove
Theorem~\ref{thm:tauxm} assuming the following lemmas.

\begin{lemma}\label{lem:tobase}
  $\tau(X_m, \a_m) = \tau( X, \alpha \otimes \g_m)$.
\end{lemma}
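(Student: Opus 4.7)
The plan is to identify the two twisted cellular chain complexes whose torsions we want to compare via a concrete ``Shapiro'' isomorphism. The underlying algebraic input is the projection formula: for the inclusion $\pi_m \hookrightarrow \pi$ there is a canonical isomorphism of left $\Z[\pi]$-modules
\[
\C^2 \otimes_\C \C[\Zm] \;\xrightarrow{\ \cong\ }\; \Z[\pi]\otimes_{\Z[\pi_m]}\C^2,
\]
where the left-hand side carries the $\alpha\otimes\gamma_m$-action and the right-hand side is induced from the $\alpha_m$-action on $\C^2$. Combined with the standard cancellation $C_*(\Xtilde)\otimes_{\Z[\pi]}(\Z[\pi]\otimes_{\Z[\pi_m]}\C^2) = C_*(\Xtilde)\otimes_{\Z[\pi_m]}\C^2$ and the fact that $\widetilde{X}_m = \Xtilde$, this yields a chain isomorphism
\[
\Psi\colon C_*^{\alpha\otimes\gamma_m}\bigl(X;\,\C^2\otimes\C[\Zm]\bigr) \xrightarrow{\ \cong\ } C_*^{\alpha_m}(X_m;\C^2).
\]

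To make $\Psi$ explicit I would fix $x\in\pi$ with $\phi(x)=1$, so that $1,x,\ldots,x^{m-1}$ is a system of coset representatives for $\pi/\pi_m$, and define the module isomorphism by $v\otimes [i]\mapsto x^i\otimes\alpha(x)^{-i}v$; the twist by $\alpha(x)^{-i}$ is forced by $\pi$-equivariance and is verified using $x^i\pi_m x^{-i}=\pi_m$ together with $\alpha|_{\pi_m}=\alpha_m$. Choosing an Euler lift $\{\ctil_j\}$ of $X$ and using $\{x^s\ctil_j : 0\le s<m\}$ as the induced Euler lift of $X_m$, a short computation---moving the factor $x^{-i}$ across the tensor $\otimes_{\Z[\pi_m]}$ by means of $x^m\in\pi_m$---shows that $\Psi$ sends the natural basis vector $\ctil_j\otimes v_k\otimes [i]$ to $x^s\ctil_j\otimes\alpha(x)^s v_k$, where $s\in\{0,\ldots,m-1\}$ satisfies $s\equiv -i\pmod m$.

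It then remains to check that $\Psi$ preserves the torsion, not merely the chain-isomorphism type. In each degree, the matrix of $\Psi$ between the natural ordered bases factors as a permutation (reindexing $(j,k,i)\leftrightarrow (j,s,k)$) composed with a block-diagonal matrix whose $2\times 2$ blocks are powers of $\alpha(x)\in\SL$ and hence of determinant $1$; by ordering the cells of $X_m$ compatibly, the permutation reduces block by block to a reversal on the pairs $\{(2i,2i+1) : 1\le i<m\}$, which has sign $+1$. Neither side of the claimed equality depends on a choice of $\spinc$-structure---for the left because $\alpha_m$ lands in $\SL$, and for the right because $\det(\alpha\otimes\gamma_m)=\det(\alpha)^m\det(\gamma_m)^2=1$---and the sign prefactor $(-1)^{N(\cdot)\dim V}$ in the definition of $\tau$ equals $+1$ on both sides because $\dim V$ is even. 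The main technical obstacle is exactly this bookkeeping of ordered bases, Euler lifts, and sign cancellations; once those are in place the torsion equality is immediate.
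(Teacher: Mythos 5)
Your proof is correct and rests on the same underlying identification as the paper's, namely the Shapiro-type chain isomorphism $C_*(\Xtilde)\otimes_{\Z[\pi_m]} V \cong C_*(\Xtilde)\otimes_{\Z[\pi]}\bigl(\C[\Zm]\otimes V\bigr)$; your $\Psi$ is just the inverse of the paper's map $f$, computed with the same pulled-back triangulation and Euler lift of $X_m$. The real difference is in the basis bookkeeping. You retain the tensor basis $\{v_k\otimes[i]\}$ of $\C^2\otimes\C[\Zm]$, so $\Psi$ is not a based isomorphism, and you must verify that its matrix in each degree has determinant $1$ by factoring it as a block permutation with $\SL$-blocks. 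Your sign argument is correct but compressed; the essential point is that the block permutation $i\mapsto -i\bmod m$ acts on blocks of even size $2$, and any permutation of even-size blocks is an even permutation of the underlying elements. The paper avoids all of this by instead choosing the basis $\{\gbar^{-k}\otimes g^{-k}v_\ell\}$ of $\C[\Zm]\otimes V$, with respect to which $f$ carries the $n$th ordered basis vector of one complex to the $n$th of the other; the torsion equality is then immediate with no determinant, permutation, or sign to track. Both routes are valid; the paper's choice of basis trades your determinant computation for a one-line observation and removes any risk of a sign error, while your version keeps the Shapiro-lemma origin of the isomorphism a bit more transparent.
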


\begin{lemma}\label{lem:toreval}
For every $\xi \in \C^\times$ and $\ss \in \spinc(X)$ we have
\begin{equation}\label{equ:toreval}
\tau(X,\a\otimes \phi, \ss)(\xi)= \tau(X,\a\otimes \l_{\xi},\ss).
\end{equation}
\end{lemma}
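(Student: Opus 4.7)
The plan is to reduce (\ref{equ:toreval}) to an identity of Fox-calculus expressions, exploiting the observation that the representation $\a \otimes \lambda_\xi$ is \emph{literally} the specialization of $\a \otimes \phi$ at $t = \xi$. Indeed, $(\a \otimes \lambda_\xi)(g) = \xi^{\phi(g)}\a(g) = (\a \otimes \phi)(g)\big|_{t = \xi}$ for every $g \in \pi$, so each matrix entry on the left is just the specialization of the corresponding entry on the right.

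Concretely, I would first fix a deficiency-one presentation $\langle x_1, \dots, x_n \mid r_1, \dots, r_{n-1}\rangle$ of $\pi$ (such as a Wirtinger presentation) together with a generator $x_i$ so that $\det\!\big((\a \otimes \phi)(\overline{x}_i - 1)\big)$ is a nonzero element of $\ct$ which, moreover, does not vanish at the given $\xi$; this is possible for a generic choice of $x_i$ and $\xi$, and the exceptional values of $\xi$ for a fixed $x_i$ form a finite set. By Proposition~\ref{prop:fox2} there is a $\spinc$-structure $\ss_0 \in \spinc(X)$ so that
\[
\tau(X, \a \otimes \phi, \ss_0) = \frac{\det\!\big((\a \otimes \phi)(\overline{A}_i)\big)}{\det\!\big((\a \otimes \phi)(\overline{x}_i - 1)\big)} \quad \text{and} \quad \tau(X, \a \otimes \lambda_\xi, \ss_0) = \frac{\det\!\big((\a \otimes \lambda_\xi)(\overline{A}_i)\big)}{\det\!\big((\a \otimes \lambda_\xi)(\overline{x}_i - 1)\big)}
\]
whenever the respective denominators are nonzero. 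Since determinants are polynomial in the matrix entries, substituting $t = \xi$ in the first identity produces the second, establishing (\ref{equ:toreval}) for $\ss = \ss_0$ and generic $\xi$.

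To extend to arbitrary $\ss \in \spinc(X)$, I would invoke Lemma~\ref{lem:torondata}: writing $\ss = h \cdot \ss_0$ for some $h \in H_1(X;\Z)$, the rescaling factors $(\det(\a \otimes \phi)(h))^{-1}$ and $(\det(\a \otimes \lambda_\xi)(h))^{-1}$ are themselves related by the substitution $t \mapsto \xi$, so compatibility propagates. For the finitely many remaining exceptional values of $\xi$, I would switch to a different generator $x_j$ whose Fox-calculus denominator is nonvanishing at $\xi$; the resulting $\spinc$-structure $\ss_0'$ differs from $\ss_0$ by an element of $H_1(X;\Z)$, which is already covered. The hard part is really just bookkeeping --- tracking the involution $g \mapsto g^{-1}$, the sign conventions governing the $\spinc$-structure rescaling, and ensuring the Fox-calculus denominator can be made nonzero at each desired $\xi$ --- because at its heart (\ref{equ:toreval}) is a base-change statement for torsion, and Proposition~\ref{prop:fox2} already encodes the explicit formula that makes the substitution transparent.
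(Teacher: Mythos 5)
Your overall strategy is the same as the paper's: compute both sides via Proposition~\ref{prop:fox2}, observe that $(\a\otimes\lambda_\xi)(a)$ is literally the specialization of $(\a\otimes\phi)(a)$ at $t=\xi$ for $a\in\Z[\pi]$, and conclude. But your handling of the nonvanishing-denominator requirement has a genuine gap. You claim that for the finitely many $\xi$ where $\det\big((\a\otimes\phi)(\xbar_i-1)\big)$ vanishes you can "switch to a different generator $x_j$ whose Fox-calculus denominator is nonvanishing at $\xi$," but you never justify that such a generator exists --- and in fact it may not, in the setting where you need the lemma most. For a Wirtinger presentation every generator is a conjugate meridian, so if $\a$ is a holonomy lift with $\a(\mu)$ parabolic (trace $\pm 2$), then \emph{every} generator has $\det\big((\a\otimes\phi)(\xbar_i-1)\big)$ vanishing at $\xi=\pm 1$. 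Yet $\xi=1$ is precisely one of the evaluations you need for Theorem~\ref{thm:tauxm}, so generator-switching inside a Wirtinger presentation cannot rescue the argument there. The "generic $\xi$" framing plus an unjustified switching claim does not give the lemma for all $\xi\in\C^\times$.

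The paper closes this gap with a single clean choice that works uniformly. By irreducibility one can find $g\in[\pi,\pi]$ with $\tr\big(\a(g)\big)\neq 2$, and take a presentation in which $g$ is a generator. Then $\phi(g)=0$, so
\[
(\a\otimes\phi)(g^{-1}-1)=(\a\otimes\lambda_\xi)(g^{-1}-1)=\a(g^{-1}-1),
\]
which is \emph{independent of $t$} and has nonzero determinant because $\tr\big(\a(g)\big)\neq 2$. This single denominator is therefore nonzero for $\a\otimes\phi$ as a polynomial and for every evaluation $\a\otimes\lambda_\xi$ simultaneously, with the same $\ss$ from Proposition~\ref{prop:fox2} on both sides, so no case analysis over $\xi$ or over generators is needed. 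Your invocation of Lemma~\ref{lem:torondata} to pass to an arbitrary $\ss$ is fine and matches what's implicit in the paper; the missing idea is this commutator-subgroup trick that makes the denominator constant.
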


\begin{proof}[Proof of Theorem~\ref{thm:tauxm}]
Using Lemma~\ref{lem:tobase} and the fact that
$\gamma_m$ and $\sumunity\lambda_{\zeta}$ are
conjugate representations of $\pi$, we have
\[
  \tau(X_m, \a_m) = \tau( X, \alpha \otimes \g_m) =\produnity \tau(X,\a\otimes \l_{\zeta},\ss).
\]
Note here that while the other terms do not depend on $\ss$, those in
the product at right do since $\a\otimes \l_{\zeta}$ is no longer a
special linear representation.  We now apply Lemma~\ref{lem:toreval} to find
\begin{align*}
  \tau(X_m, \a_m)
  &=\produnity \tau(X,\a\otimes \l_{\zeta},\ss) =
  \produnity \tau(X,\a\otimes \phi,\ss)(\zeta) \\
  &=\produnity (\zeta)^{-\phi(c_1(\ss))}\tk^\a(\zeta) =\produnity \tk^\a(\zeta)
\end{align*}
where the last two equalities follow from (\ref{eq:tordef}) and the fact
that $\prod \zeta = 1$.
\end{proof}
\begin{proof}[Proof of Lemma~\ref{lem:tobase}]
  The idea is that for suitable choices one gets an isomorphism
  \[
  C_*^{\a_m}(X_m;V) \to  C_*^{\gamma_m \otimes \a}\big(X;\C[\Zm] \otimes_\C V\big)
  \]
  as \emph{based} chain complexes over $\C$, and hence
  their torsions are the same.

  Fix a triangulation for $X$ with an ordering $c_j$ of its cells, as
  well as an Euler lift $c_j \mapsto \ctil_j$ of the cells to
  the universal cover $\Xtilde$.  Let $\phi_m \maps \pi \to
  \Zm$ be the epimorphism whose kernel is $\pi_m = \pi_1(X_m)$, and
  fix $g \in \pi$ where $\gbar = \phi_m(g)$ generates $\Zm$.

  Consider the triangulation of $X_m$ which is pulled back from that
  of $X$, and let $c'_j$ be the cell in $X_m$ which is the image of
  $\ctil_j$ under $\Xtilde \to X_m$.  Then each cell of $X_m$ has a
  unique expression as $g^k \cdot c'_j$ for $k$ in $\{0, \cdots, k -
  1\}$, where here $g^k$ acts on $X_m$ as a deck transformation.
  We
  order these cells so that $g^k \cdot c'_j < g^{k'} \cdot c'_{j'}$ if $j <
  j'$ or both $j = j'$ and $k < k'$.  When computing torsion, we'll
  use the Euler lift $g^k \cdot c'_j \mapsto g^k \cdot \ctil_j$ for
  $X_m$.

  Let $V$ denote $\C^2$ with the $\pi$-module structure given by
  $\a$, and let  $\{v_1, v_2 \}$ be an ordered basis for $V$.
  Consider the map
  \begin{equation}\label{eq:messymap}
  f \maps C_*(\Xtilde) \otimes_{\Z[\pi_m]} V \quad  \to \quad
  C_*(\Xtilde) \otimes_{\Z[\pi]} \big( \C[\Zm] \otimes_\C V  \big)
  \end{equation}
  induced by $ \ctil \otimes v \mapsto \ctil \otimes ( 1 \otimes v )$;
  this is well defined since for $h \in \pi_m$ we have
  \begin{align*}
   f\big( (\ctil \cdot h) \otimes v \big) &= (\ctil \cdot h) \otimes (1
   \otimes v) = \ctil \otimes \big(h \cdot (1 \otimes v) \big) \\
   &= \ctil \otimes \big( (h \cdot 1) \otimes (h \cdot v) \big) =
   \ctil \otimes \big( 1 \otimes (h \cdot v) \big) = f\big( \ctil
   \otimes (h \cdot v) \big),
  \end{align*}
  where we used $h \in \pi_m$ to see $h \cdot 1 = 1$ in $\C[\Zm]$.
  Clearly $f$ is a chain map of complexes of $\C$-vector spaces, and
  it is an isomorphism since it sends the elements of the basis
  $\{(g^k \cdot \ctil_j) \otimes v_\ell\}$ to those of the basis
  $\{\ctil_k \otimes ( \gbar^{-k} \otimes g^{-k} \cdot v_\ell)\}$.
  Now choose $\{ v_{k,\ell} = \gbar^{-k} \otimes g^{-k} \cdot
  v_\ell\}$ as our basis for $\C[\Zm] \otimes_\C V$ and order them by
  $v_{k,\ell} < v_{k',\ell'}$ if $k < k'$ or both $k = k'$ and $\ell <
  \ell'$.  Then with the ordered bases used in
  Section~\ref{section:twitorsion}, the map $f$ in (\ref{eq:messymap})
  is an isomorphism of \emph{based} chain complexes.  In particular,
  the complexes have the same torsion, which proves the lemma.
\end{proof}

\begin{proof}[Proof of Lemma~\ref{lem:toreval}]
  Since for any $a \in \Z[\pi]$ we have $(\alpha \otimes \phi)(a)(\xi)
  = \alpha \otimes \lambda_\xi(a)$ the result should follow by
  computing both sides of (\ref{equ:toreval}) with
  Proposition~\ref{prop:fox2}.  The only issue is that we need to
  ensure the nonvanishing of the denominators in (\ref{equ:comptau2})
  for both $\alpha \otimes \phi$ and $\alpha \otimes \lambda_\xi$.
  Since $\a$ is irreducible, we can choose $g \in [\pi, \pi]$ so that
  $\tr(\alpha(g)) \neq 2$ (see e.g.~\cite[Lemma 1.5.1]{CS83}). Notice
  then that $\alpha \otimes \phi ( g^{-1} - 1 ) = \alpha \otimes
  \lambda_\xi ( g^{-1} - 1 ) = \alpha(g^{-1} - 1)$, and since
  $\tr(\alpha(g)) \neq 2$ we have $\det\left(\alpha(g^{-1} - 1)
  \right) \neq 0$.  Hence if we take a suitable presentation of $\pi$
  where $g$ is a generator, then we can apply
  Proposition~\ref{prop:fox2} with $x_i = g$ to both $\alpha \otimes
  \phi$ and $\alpha \otimes \lambda_\xi$ and so prove the lemma.
\end{proof}

We turn now to the proof of Theorem~\ref{thm:taux}, which says that
typically the torsions $\tau(X_m, \a_m)$ collectively determine
$\tk^\a$ (by Theorem~\ref{thm:tauxm}, the hypothesis that $\tau(X_m,
\a_m) \neq 0$ for all $m$ is equivalent to no root of $\tk^\a$ being a
root of unity).  A polynomial $p$ in $\C[t]$ of degree $d$ is
\emph{palindromic} if $p(t) = t^d p(1/t)$, or equivalently if its
coefficients satisfy $a_k = a_{d - k}$ for $0 \leq k \leq d$.  For any
polynomial $p \in \C[t]$ and $m \in \N$, we denote by $r_m(p)$ the
resultant of $t^m-1$ and $p$, i.e.
\[
r_n(p)= \mathrm{Res}(p, t^m - 1) = (-1)^{m d} \mathrm{Res}(t^m - 1, p)
= (-1)^{m d} \produnity p( \zeta )
\]
where here $d$ is the degree of $p$.  The following theorem was proved
by Fried \cite{Fr88} for $p \in \R[t]$ and generalized by Hillar \cite{Hillar2005a} to
the case of $\C[t]$.

\begin{theorem}\label{thm:fried}
Suppose $p$ and $q$ are palindromic polynomials in $\C[t]$.  If
$r_m(p)=r_m(q) \neq 0$ for all $m \in \N$ then $p = q$.
\end{theorem}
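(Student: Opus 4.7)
The plan is to interpret the data $(r_m(p))_{m \geq 1}$ as the coefficient sequence of a rational generating function and to recover $p$ from it. Writing $p(t) = c \prod_{i=1}^n (t - \alpha_i)$ and expanding the product over $m$-th roots of unity, one gets the key identity
\[
r_m(p) \;=\; c^m \prod_{i=1}^n (\alpha_i^m - 1) \;=\; \sum_{S \subseteq \{1, \dots, n\}} (-1)^{n-|S|} \lambda_S^m, \qquad \lambda_S \assign c \prod_{i \in S} \alpha_i,
\]
so that the generating function $F_p(s) \assign \sum_{m \geq 1} r_m(p)\, s^m$ is the explicit rational function $\sum_S (-1)^{n-|S|} \lambda_S s / (1 - \lambda_S s)$.

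From the hypotheses I would first extract structural information. The nonvanishing of every $r_m(p)$ forces no $\alpha_i$ to be a root of unity; in particular $\pm 1$ are not roots, and combined with palindromicity this forces $n$ to be even and $\prod_i \alpha_i = 1$, so that $\lambda_\emptyset = \lambda_{\{1,\dots,n\}} = c$. Palindromicity also implies that the involution $S \mapsto \iota(S^c)$ on subsets (where $\iota$ swaps $\alpha \leftrightarrow \alpha^{-1}$) preserves both $\lambda_S$ and the sign $(-1)^{n-|S|}$, so the terms of $F_p$ come in matched pairs. The equality $r_m(p) = r_m(q)$ for all $m$ gives $F_p = F_q$, and by uniqueness of partial fractions --- equivalently, linear independence of the sequences $m \mapsto \lambda^m$ for distinct nonzero $\lambda \in \C$ --- the net signed multiplicity of each $\lambda \in \C^\times$ appearing in the partial-fraction expansion must agree for $p$ and $q$.

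The heart of the argument, and the main obstacle, is to pass from agreement of this signed polar data back to equality of the multisets $\{\alpha_i\}$ and the leading coefficient $c$. The difficulty is that different subsets may produce the same $\lambda_S$, so one must control what survives cancellation. My plan is to induct on $n$: the pole of $F_p$ at $s = 1/\lambda$ with $|\lambda|$ largest comes from $\lambda_S$ with $S$ consisting of all $\alpha_i$ of modulus $> 1$ (together with any $|\alpha_i| = 1$ contributions carefully analysed), and the palindromic pairing together with the nonvanishing hypothesis prevents this pole from being cancelled by other $\lambda_{S'}$. Reading off the product $\prod_{|\alpha_i| > 1} \alpha_i$ and then the reciprocal-root pair of largest modulus, one peels it off and iterates, ultimately reconstructing the multiset $\{\alpha_i\}$ and $c$, whence $p = q$. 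This is essentially Fried's strategy in the real case \cite{Fr88}; Hillar's extension \cite{Hillar2005a} treats the complex case where complex conjugation no longer automatically pairs roots and some additional bookkeeping is needed to distinguish the $\lambda_S$.
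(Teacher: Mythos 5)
The paper does not actually prove Theorem~\ref{thm:fried}; it quotes it and cites Fried~\cite{Fr88} for the real case and Hillar~\cite{Hillar2005a} for the complex case. So there is no in-paper proof to compare against, and your proposal must be judged on its own merits as a proof sketch in the spirit of those references.

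Your setup is correct and is the right framework. The identity $r_m(p) = c^m\prod_i(\alpha_i^m - 1) = \sum_{S}(-1)^{n-|S|}\lambda_S^m$ with $\lambda_S = c\prod_{i\in S}\alpha_i$ is right, as are the structural deductions: nonvanishing of all $r_m$ rules out roots of unity, hence $\pm 1$ are not roots, so palindromicity forces $n$ even and $\prod_i\alpha_i = 1$, and the involution $S\mapsto\iota(S^c)$ preserves $\lambda_S$ and the sign. The passage to the signed polar multiplicities $e_\lambda$ via linear independence of the sequences $m\mapsto\lambda^m$ is also standard and correct. You have correctly identified that the real content is in going from agreement of the $e_\lambda$ back to agreement of the root multisets, and that uncontrolled cancellation among equal $\lambda_S$ is the obstruction.

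The ``peeling'' step is where the sketch has genuine gaps, not just unfinished bookkeeping. First, the nonvanishing hypothesis only excludes roots of unity, not roots of modulus one, so the ``unique $S$ of maximal $|\lambda_S|$'' step fails precisely when there are roots on the unit circle --- the set of maximal-modulus $\lambda_S$ then carries many values (differing by arguments of unit-modulus roots), with signs depending on $|S|$, and it is no longer automatic that the top value survives cancellation or pins down a single $c\prod_{|\alpha_i|>1}\alpha_i$. You flag this but do not resolve it, and it is the crux of Hillar's extension. Second, even without unit-circle roots, the next-largest-modulus $\lambda$'s are $\lambda_{\max}/\alpha$ for the roots $\alpha$ of modulus $>1$ \emph{closest to} $1$, not the pair of largest modulus as you state; and when several roots share that modulus the identification of which reciprocal pair to peel requires an argument. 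Third, the induction needs the observation (which you should state explicitly) that once a common root pair $(\alpha,1/\alpha)$ is identified, multiplicativity of resultants gives $r_m(\tilde p) = r_m(p)/\bigl((\alpha^m-1)(\alpha^{-m}-1)\bigr)$, so the hypotheses descend to $\tilde p,\tilde q$; without this the induction doesn't get off the ground. In short: the framework is the correct one and a complete argument along these lines does exist, but the present sketch would not compile into a proof as written, most seriously because the unit-circle case --- which is exactly what distinguishes the complex statement from Fried's real one --- is left open.
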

\noindent
Theorem~\ref{thm:taux} now follows easily from
Theorems~\ref{thm:tauxm} and \ref{thm:fried} and the symmetry of
$\tk^\a$ shown in Theorem~\ref{thm:Tsym}.

\begin{remark}
  We just saw that, under mild assumptions, the torsions $\tau(X_m,
  \a_m)$ of cyclic covers determine the $\C(t)$-valued torsion
  polynomial $\tk^a$.  It would be very interesting if one could
  directly read off the degree and the top coefficient of $\tk^\a$
  from the $\tau(X_m, \a_m)$.  See \cite{HillarLevine2007} for some of
  what's known about recovering a palindromic polynomial $p$ from the
  sequence $r_m(p)$; in particular, when $p$ is monic and of even
  degree $d$, Sturmfels and Zworski conjecture that one only needs to
  know $r_m(p)$ for $m \leq d/2 + 1$ to recover $p$.
\end{remark}

\section{Torsion polynomials of hyperbolic knots} \label{section:hypknots}\label{section:hyp}

Let $K$ be a hyperbolic knot in an oriented $\Ztwo$-homology sphere $Y$. In this
section, we define the hyperbolic torsion polynomial $\tk$ associated
to a certain preferred lift to $\SL$ of the holonomy representation of
its hyperbolic structure.

\subsection{The discrete and faithful \texorpdfstring{$\SL$}{SL(2,C)} representations}

As usual, we write $\pi=\pi_K\assign\pi_1(X_K)$, and let
$\mu \in \pi$ be a meridian for $K$.  The orientation of $\mu$, or
equivalently of $K$, will not matter in this section, but fix one so
that $\phi \colon \pi \to \Z$ is determined.

From now on assume that $M = Y \setminus K \cong \interior(X)$ has a
complete hyperbolic structure.  The manifold $M$ inherits an
orientation from $Y$, and so its universal cover $\widetilde{M}$ can
be identified with $\H^3$ by an \emph{orientation preserving}
isometry.  This identification is unique up to the action of
$\Isom^+(\H^3) = \PSL$, and the action of $\pi$ on $\widetilde{M} =
\H^3$ gives the \emph{holonomy representation} $\alphabar \maps \pi \to
\PSL$, which is unique up to conjugation.

\begin{remark}
  By Mostow-Prasad rigidity, the complete hyperbolic structure on $M$
  is unique.  Thus $\alphabar$ is determined, up to conjugacy, solely
  by the knot $K$ and the orientation of the ambient manifold $Y$.  A subtle point
  is that there are actually \emph{two} conjugacy classes of discrete
  faithful representations $\pi_K \to \PSL$; the other one corresponds
  to reversing the orientation of $Y$ (not $K$) or equivalently
  complex-conjugating the entries of the image matrices.
\end{remark}

To define a torsion polynomial, we want a representation into $\SL$
rather than $\PSL$.  Thurston proved that $\alphabar$ always lifts to
a representation $\alpha \maps \pi \to \SL$; see \cite{Th97} and
\cite[Section~1.6]{Sh02} for details.  In fact, there are exactly two
such lifts, the other being $g \mapsto (-1)^{\phi(g)}\alpha(g)$; the
point is that any other lift has the form $g \mapsto
\epsilon(g)\alpha(g)$ for some homomorphism $\epsilon \maps \pi \to
\{\pm 1\}$, i.e.~some element of $H^1(M; \Ztwo) = \Ztwo$.  Now
$\alphabar(\mu)$ is parabolic, and so $\tr\big(\alpha(\mu)\big) = \pm 2$.
Since $Y$ is a \ZtwoHS, we know $\phi(\mu)$ is odd; hence there is a
lift $\a$ where $\tr\big(\a(\mu)\big) = 2$; arbitrarily, we focus on
that lift and call it the \emph{distinguished representation}.  This
representation is determined, up to conjugacy, solely by $K$ (sans
orientation). We explain below the simple change that results if we
instead required the trace to be $-2$.

\subsection{The hyperbolic torsion polynomial}\label{sec:hypertorsion}

For a hyperbolic knot $K$ in an oriented \ZtwoHS, we define the
\emph{hyperbolic torsion polynomial} to be
\[ \T_K(t)\assign\T_K^{\a}(t)\] where $\a \maps \pi \to \SL$ is the
distinguished representation.  Before proving Theorem \ref{thm:deft}
which summarizes basic properties of $\T_K(t)$, we give a few
definitions.  The \emph{trace field} $\F_K$ of $K$ is the field
obtained by adjoining to $\Q$ the elements $\tr\big(\alpha(g)\big)$
for all $g \in \pi$; this is a finite extension of $\Q$ and an
important number theoretic invariant of the hyperbolic structure on
$M$; see \cite{MR03} for more.  We say that $K$ has \emph{integral
  traces} if every $\tr\big(\alpha(g)\big)$ is an algebraic integer
(this is necessarily the case if $M$ does not contain a closed
essential surface, see e.g.~\cite[Theorem 5.2.2]{MR03}).  Also, we
denote by $K^*$ the result of switching the orientation of the ambient
manifold $Y$; we call $K^*$ the \emph{mirror image} of $K$.  We call
$K$ \emph{amphichiral} if $Y$ has an orientation reversing
self-homeomorphism which takes $K$ to itself; equivalently, $K = K^*$
in the category of knots in oriented \3-manifolds.
\begin{maintheorem}\label{thm:deft2}
  Let $K$ be a hyperbolic knot in an oriented $\Ztwo$-homology \3-sphere. Then
  $\tk$ has the following properties:
  \begin{enumerate}
  \item \label{item:defsym}
   $\tk$ is an unambiguous element of \hspace{0.01ex} $\ct$ which
    satisfies $\tk(t^{-1}) = \tkt$. It does not depend on an
    orientation of $K$.
  \item \label{item:NT}
    The coefficients of $\tk$ lie in the trace field of $K$. If $K$
    has integral traces, the coefficients of $\tk$ are algebraic integers.
  \item \label{item:nonvanish}
    $\tk(\xi)$ is non-zero for any root of unity $\xi$.
  \item \label{item:mirror}
    If $K^*$ denotes the mirror image of $K$, then
    $\T_{K^*}(t)=\oltk(t)$, where the coefficients of the latter
    polynomial are the complex conjugates of those of $\T_K$.
  \item \label{item:amph}
    If $K$ is amphichiral then $\tk$ is a real polynomial.
  \item \label{item:mut} The values $\tk(1)$ and $\tk(-1)$ are mutation invariant.
\end{enumerate}
\end{maintheorem}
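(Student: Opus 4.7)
The plan is to address the six properties in turn, leveraging the machinery of Sections~\ref{section:twisted}--\ref{sec:cyclic}.  For part~(\ref{item:defsym}), the discrete faithful lift $\a$ is irreducible, so Theorem~\ref{thm:ktm05}(\ref{item:irred}) gives $\tk=\tk^{\a}\in\ct$ and Theorem~\ref{thm:Tsym} supplies the symmetry $\tk(t^{-1})=\tk(t)$.  To see the invariant is orientation-independent, I would note that reversing the orientation of $K$ replaces the meridian $\mu$ by $\mu^{-1}$, and Cayley--Hamilton in $\SL$ forces $\tr(\a(\mu^{-1}))=\tr(\a(\mu))=2$, so the trace-$2$ normalization picks out the same lift.

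For part~(\ref{item:NT}), I would use the standard argument for cusped hyperbolic $3$-manifolds (e.g.~\cite{MR03}) to conjugate $\a$ into $\SL(2,\F_K)$, and into $\SL(2,\OO_{\F_K})$ under the integral-traces hypothesis.  After further conjugating so that $\a(\mu)=\bp 1 & 1\\ 0 & 1\ep$, the denominator appearing in Proposition~\ref{prop:fox} applied with $x_i=\mu$ reduces to the monic polynomial $(t-1)^2$, so the quotient $\tk$ inherits its coefficients from the ring in which the numerator entries lie.  Part~(\ref{item:nonvanish}) then follows immediately from Theorem~\ref{thm:tauxm}: the identity $\produnity \tk(\zeta)=\tau(X_m,\a_m)$ combined with the Menal-Ferrer--Porti nonvanishing $\tau(X_m,\a_m)\neq 0$ rules out any root of unity as a zero of $\tk$.

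Parts~(\ref{item:mirror}) and (\ref{item:amph}) I would handle together.  Reversing the orientation of $Y$ corresponds at the level of $\SL$-matrices to complex-conjugating all matrix entries of the holonomy (this is precisely the content of the remark following the definition of $\alphabar$).  Complex conjugation preserves trace $2$, so the distinguished representation of $K^*$ is $\bar{\a}$, and since the Fox-calculus formula of Proposition~\ref{prop:fox} is polynomial in the entries of $\a$, one obtains $\T_{K^*}=\oltk$.  Amphichirality means $K=K^*$ in the oriented category, so $\tk=\oltk$, forcing the coefficients of $\tk$ to be real.

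Part~(\ref{item:mut}) is expected to be the main obstacle.  Applying Theorem~\ref{thm:tauxm} with $m=1,2$ gives $\tk(1)=\tau(X,\a)$ and $\tk(1)\cdot\tk(-1)=\tau(X_2,\a_2)$, where $X_2$ is the double cyclic cover; since a Conway mutation of $K$ lifts to a mutation on each cyclic cover, it suffices to prove that both $\tau(X,\a)$ and $\tau(X_2,\a_2)$ are mutation-invariant at the discrete faithful representation.  The intended strategy, patterned on Ruberman's proof of mutation invariance of hyperbolic volume, is to cut along the Conway sphere $S$ and apply a Mayer--Vietoris-type multiplicativity for torsion: the restriction of the hyperbolic structure to $S$ admits an isometry realizing the Conway involution, so the character $\chi_\a|_{\pi_1(S)}$ is fixed by the involution and the two ways of regluing the pieces produce the same torsion.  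Making this gluing formula precise --- in particular correctly accounting for the $\spinc$-structures and homology orientations across the cut-and-reglue operation --- is the principal difficulty.
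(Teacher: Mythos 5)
Parts (\ref{item:defsym})--(\ref{item:amph}) of your proposal follow essentially the same path as the paper: symmetry and polynomiality come from Theorems~\ref{thm:Tsym} and \ref{thm:ktm05}(\ref{item:irred}), the observation that $\tr\big(\a(\mu^{-1})\big)=\tr\big(\a(\mu)\big)$ gives orientation-independence of the distinguished lift, Neumann--Reid and \cite{MR03} give the trace-field/integrality statements via Proposition~\ref{prop:fox} with $x_i = \mu$, and the mirror/amphichiral statements follow from the fact that reversing the orientation of $Y$ conjugates the holonomy.  One small imprecision: the denominator in Proposition~\ref{prop:fox} is $(t^k-1)^2$ with $k=-\phi(\mu)$, not $(t-1)^2$ --- in a $\Ztwo$-homology sphere $\phi(\mu)$ is odd but need not be $1$ --- though monicity still holds, which is all the integrality argument needs (and you still need to spell out the inductive step that a quotient of a Laurent polynomial over $\OO_\K$ by a monic one, when known to be a Laurent polynomial, has coefficients in $\OO_\K$).

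The genuine gap is part~(\ref{item:mut}).  The paper does not prove this claim at all --- it cites it as a result of Menal-Ferrer and Porti \cite{MenalFerrerPorti2011}.  Your reduction to showing that $\tau(X,\a)$ and $\tau(X_2,\a_2)$ are mutation invariant is correct (using part~(\ref{item:nonvanish}) so that $\tk(1)\neq 0$), and your proposed strategy of adapting Ruberman's pleated-surface argument together with a Mayer--Vietoris multiplicativity for torsion is indeed the right conceptual picture.  But as you yourself note, you have not carried it out: the multiplicativity formula requires controlling the twisted (co)homology of the Conway sphere, handling the non-acyclicity of the piece complexes, and matching Euler structures and homology orientations across the cut --- precisely the technical content of \cite{MenalFerrerPorti2011}.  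There is also a point you skip: you assert the Conway sphere lifts to each cyclic cover, which needs justification (the restriction of $\phi$ to $\pi_1$ of the four-punctured sphere must vanish, which should be checked rather than assumed).  As written this item is a sketch of a strategy rather than a proof, and in the paper's economy the correct move is simply to cite \cite{MenalFerrerPorti2011}.
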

\noindent
For the special case of 2-bridge knots and $\xi=\pm 1$, the assertion
(\ref{item:nonvanish}) is also a consequence of the work of
Hirasawa-Murasugi \cite{HM08} and Silver-Williams \cite{SW09c}.
\begin{proof}
  Since the distinguished representation $\a$ is irreducible, part
  (\ref{item:defsym}) follows from Theorems~\ref{thm:Tsym} and
  \ref{thm:ktm05}(\ref{item:irred}).

  Next, since $M$ has a cusp, by Lemma~2.6 of \cite{NeumannReid1992}
  we can conjugate $\a$ so that its image lies in $\SLFK$, where
  $\F_K$ is the trace field; hence $\tk \in \F_K\tpm$ proving the
  first part of (\ref{item:NT}).  To see the other part, first using
  \cite[Theorem~5.2.4]{MR03} we can conjugate $\a$ so that $\a(\pi)
  \subset \SLOK$, where here $\OO_\K$ is the ring of algebraic
  integers in some number field $\K$ (which might be a proper
  extension of $\F_K$).  We now compute $\tk^\a$ by applying
  Proposition~\ref{prop:fox} to a presentation of $\pi$ where $\mu$ is
  our preferred generator.  Since $\a(\mu)$ is parabolic with trace 2,
  the denominator in (\ref{equ:comptau}) is $p(t) \assign \det\big(
  (\a \otimes \phi)( \mu^{-1} - 1 )\big) = (t^k - 1)^2$ where $k =
  -\phi(\mu) \neq 0$.  Thus by (\ref{equ:comptau}), we know $p(t)
  \cdot \tk^\a$ is in $\OO_K\tpm$.  Then since $p(t) \in \Z[t^{\pm
    1}]$ is monic, the lead coefficient of $\tk^\a$ must be integral.
  An easy inductive argument now shows that all the other coefficients
  are also integral, proving part (\ref{item:NT}).

  The proof of (\ref{item:nonvanish}) uses Theorem~\ref{thm:tauxm},
  and we handle all \mth\ roots of unity at once.  In
  the notation of Section~\ref{sec:cyclic}, we have
  \begin{equation}\label{eq:prodrepeat}
    \produnity \T_K(\zeta)= \tau(X_m,\a_m).
  \end{equation}
  By Menal--Ferrer and Porti \cite[Theorem~0.4]{MP09}, which builds on
  work of Raghunathan \cite{Ra65}, we have that
  $H_*^{\a_m}(X_m;\C^2)=0$, or equivalently, $\tau(X_m,\a_m)$ is
  non-zero.  Thus by (\ref{eq:prodrepeat}) we must have $\T_K(\zeta)
  \neq 0$ for any \mth root of unity, establishing part
  (\ref{item:nonvanish}).

  For (\ref{item:mirror}), the distinguished representation for the
  mirror knot $K^*$ is $\alphabar \maps \pi \to \SL$ where each
  $\alphabar(g)$ is the matrix which is the complex conjugate of
  $\alpha(g)$. Since our choice of orientation for the meridian $\mu$
  was arbitrary, we can use the same $\phi$ for when calculating both $\tk$ and
  $\T_{K^*}$.  Thus we have
  \[
  \T_{K^*}(t) = t^{\phi(c_1(\ss))}\tau(X, \alphabar \otimes \phi) =
  t^{\phi(c_1(\ss))} \ol{\tau(X_k, \alpha \otimes \phi)}
  = \oltk(t)
  \]
  proving (\ref{item:mirror}).  Next, claim (\ref{item:amph}) follows immediately
  from (\ref{item:mirror}).  Finally, claim (\ref{item:mut}) is a
  recent result of Menal-Ferrer and Porti \cite{MenalFerrerPorti2011}.
\end{proof}

As in Section~\ref{sec:cyclic}, we now consider the $\C$-valued
torsions $\tau(X_m, \a_m)$ of finite cyclic covers of $X_K$.
Somewhat surprisingly, these determine $\tk$:
\begin{theorem}\label{prop:tktdetermined}
  Let $K$ be a hyperbolic knot in a \ZtwoHS\ with distinguished
  representation $\a \maps \pi_K \to \SL$. Then $\tk$ is determined by
  the torsions $\tau(X_m,\a_m)\in \C$.
\end{theorem}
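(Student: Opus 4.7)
The plan is to deduce this as an immediate consequence of results already established in the paper, specifically Theorems~\ref{thm:tauxm}, \ref{thm:taux}, and \ref{thm:deft}(\ref{item:nonvanish}). The distinguished representation $\a$ is irreducible (since its image is discrete and faithful in $\PSL$ after projection), so all the hypotheses of those theorems apply to $\a$ and to $\tk = \tk^\a$.

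The first step is to verify the non-vanishing hypothesis needed to invoke Theorem~\ref{thm:taux}. Theorem~\ref{thm:tauxm} gives the identity
\[
\produnity \tk(\zeta) = \tau(X_m, \a_m)
\]
for every $m \in \N$. By Theorem~\ref{thm:deft}(\ref{item:nonvanish}), $\tk(\zeta) \neq 0$ for every root of unity $\zeta$, so the left-hand side is a nonzero complex number, and hence $\tau(X_m, \a_m) \neq 0$ for all $m$.

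The second step is to invoke Theorem~\ref{thm:taux} directly: having verified $\tau(X_m, \a_m) \neq 0$ for every $m$, that theorem asserts that the sequence $\{\tau(X_m, \a_m)\}_{m \in \N}$ determines the rational function $\tk^\a(t) = \tk(t) \in \C(t)$. Since $\tk \in \ct$ by Theorem~\ref{thm:deft}(\ref{item:defsym}) and is in particular palindromic by the symmetry $\tk(t^{-1}) = \tk(t)$, this completes the argument.

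There is essentially no obstacle here, since the nontrivial input --- the non-vanishing of $\tau(X_m, \a_m)$ for all $m$, which rests on the Menal-Ferrer--Porti result \cite{MP09}, and Fried's theorem (Theorem~\ref{thm:fried}) on recovering a palindromic polynomial from its resultants with $t^m - 1$ --- has already been incorporated in the cited theorems. The proof is therefore a short two-line assembly: non-vanishing at roots of unity via Theorem~\ref{thm:deft}(\ref{item:nonvanish}) plus Theorem~\ref{thm:tauxm}, followed by an application of Theorem~\ref{thm:taux}.
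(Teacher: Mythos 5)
Your proof is correct and takes essentially the same approach as the paper: verify that $\tau(X_m,\a_m)\neq 0$ for all $m$, then invoke Theorem~\ref{thm:taux}. The only cosmetic difference is that you derive the non-vanishing of $\tau(X_m,\a_m)$ from Theorem~\ref{thm:deft}(\ref{item:nonvanish}) combined with Theorem~\ref{thm:tauxm}, whereas the paper cites it directly from the Menal-Ferrer--Porti result used in proving Theorem~\ref{thm:deft}(\ref{item:nonvanish}); these are equivalent, since that proof establishes $\tau(X_m,\a_m)\neq 0$ as an intermediate step before deducing the root-of-unity statement.
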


\begin{proof}
  As discussed in the proof of
  Theorem~\ref{thm:deft}(\ref{item:nonvanish}), every $\tau(X_m, \a_m)
  \neq 0$, so the result is immediate from Theorem~\ref{thm:taux}.
\end{proof}

\begin{remark}
  When choosing our distinguished representation, we arbitrarily chose
  the lift $\alpha \maps \pi \to \SL$ where $\tr(\alpha(\mu)) = 2$.  As
  discussed, the other lift $\beta$ is given by $g \mapsto (-1)^{\phi(g)}
  \alpha(g)$.  Note that given $g\in \pi$ we have
  \begin{align*}\big((\b\otimes \phi)(g)\big)(t) =\b(g)\cdot
    t^{\phi(g) }&=\a(g)\cdot(-1)^{\phi(g)}\cdot t^{\phi(g)} \\
  &=\a(g)\cdot (-t)^{\phi(g)}=\big((\a\otimes \phi)(g)\big)(-t).
  \end{align*}
  It follows from Proposition \ref{prop:fox} that $\T_K\myupbeta(t)=\T_K^\a(-t)=\T_K(-t)$.
  Put differently,  using $\beta$ instead of $\alpha$ would simply replace $t$ by $-t$.
\end{remark}

\begin{remark}
  When $Y$ is not a \ZtwoHS, the choice of lift $\a$ of the holonomy
  representation can have a more dramatic effect on $\T^\a$.  For
  example, consider the manifold $m130$ in the notation of
  \cite{CallahanHildebrandWeeks1999,SnapPy}.  This manifold is a
  twice-punctured genus 1 surface bundle over the circle, and as
  $H_1(M; \Z) = \Z\oplus\Z_8$, there are $4$ distinct lifts of the
  holonomy representation.  Two of these lifts give $\tk^\a = \Tsym{2}
  - 2 i$ and the other two give $\tk^\a = \Tsym{2} + \sqrt{-8 -
    8i}\Tsym{1} - 6 i$ for the two distinct square-roots of $-8-8i$.
  In particular, the fields generated by the coefficients are
  different; only the latter two give the whole trace field.
\end{remark}

\section{Example: The Conway and Kinoshita--Terasaka knots}
\label{section:ktc}\label{section:examples}

The Conway and  Kinoshita-Terasaka knots are a famous pair of
mutant knots which both have trivial Alexander polynomial.   Despite
their close relationship, they have different genera.  Thus they are a
natural place to start our exploration of $\tk$, and we devote this
section to examining them in detail.

The Conway knot $C$ is the mirror of the knot $11n34$ in the numbering
of \cite{Knotscape, HTW98}.  The program Snap \cite{Snap, SnapPaper}
finds that the trace field $\F$ of the hyperbolic structure on the
exterior of $C$ is the extension of $\Q$ gotten by adjoining the root $\theta
\approx 0.1233737 - 0.5213097 i$ of
\[
p(x) = x^{11} - x^{10} + 3 x^9 - 4 x^8 + 5 x^7 - 8 x^6 + 8 x^5 - 5 x^4 + 6 x^3 - 5 x^2 + 2 x - 1.
\]
Snap also finds the explicit holonomy representation $\pi_C \to
\mathrm{SL}(2, \F)$, and one can directly apply
Proposition~\ref{prop:fox} to compute $\T_{C}$.  If we set
\[
\eta = \frac{1}{53}\big(20 \theta^{10} + 9 \theta^{9} + 28\theta^{8} + 3
\theta^{7} + \theta^{6} + 19 \theta^{5} + 10 \theta^{4} + 47
\theta^{3} + 6 \theta + 1\big)
\]
then $\left\{ \eta, \theta, \theta^2, \ldots, \theta^{10}\right\}$ is an integral
basis for $\mathcal{O}_{\F}$, and we find
\begin{align*}
\smalleqsize \T_{C}(t) \ =& \ \smalleqsize \left( -79 \theta^{10} - 35 \theta^{9} - 111 \theta^{8} - 11 \theta^{7} - 4 \theta^{6} - 71 \theta^{5} - 38 \theta^{4} - 187 \theta^{3} - 2 \theta^{2} - 24 \theta + 206 \eta\right) \Tsym{5}  \\
\messylinestart + \left( 257 \theta^{10} + 114 \theta^{9} + 361
  \theta^{8} + 36 \theta^{7} + 13 \theta^{6} + 232 \theta^{5} + 124
  \theta^{4} + 608 \theta^{3} + 6 \theta^{2} + 78 \theta - 671
  \eta\right) \Tsym{4}  \\
\messylinestart + \left( -372 \theta^{10} - 165 \theta^{9} - 523 \theta^{8} - 51 \theta^{7} - 21 \theta^{6} - 334 \theta^{5} - 183 \theta^{4} - 877 \theta^{3} - 11 \theta^{2} - 111 \theta + 972 \eta\right) \Tsym{3}  \\
\messylinestart+ \left( 373 \theta^{10} + 162 \theta^{9} + 528 \theta^{8} + 40 \theta^{7} + 33 \theta^{6} + 312 \theta^{5} + 200 \theta^{4} + 866 \theta^{3} + 24 \theta^{2} + 99 \theta - 968 \eta\right) \Tsym{2}  \\
\messylinestart + \left( -303 \theta^{10} - 115 \theta^{9} - 445 \theta^{8} + 14 \theta^{7} - 75 \theta^{6} - 152 \theta^{5} - 227 \theta^{4} - 649 \theta^{3} - 73 \theta^{2} - 29 \theta + 749 \eta\right) \Tsym{1}  \\
\messylinestart \smalleqsize+ \left( 116 \theta^{10} + 14 \theta^{9} + 200 \theta^{8} - 88 \theta^{7} + 116 \theta^{6} - 122 \theta^{5} + 204 \theta^{4} + 146 \theta^{3} + 124 \theta^{2} - 78 \theta - 220 \eta\right)\\
\smalleqsize \approx & \smalleqsize \  \left(4.89524+0.09920i\mvph\right)
\Tsym{5} + \left(-15.68571-0.29761i\mvph \right)\Tsym{4} +
\left(23.10363-0.07842i\mvph\right) \Tsym{3} \\
\messylinestart + \left(-26.94164+4.84509i\mvph\right) \Tsym{2} + \left(38.38349-24.49426i\mvph\right) \Tsym{1} + \left(-43.32401+44.08061i\mvph\right).
\end{align*}

The Kinoshita--Terasaka knot is the mirror of $11n42$.  Its trace field is the same
as for the Conway knot (since $[\F : \Q]$ is odd, the trace field is
also the invariant trace field, which is mutation invariant), and one finds
\begin{align*}
\smalleqsize \T_{\KT}(t) \ =& \ \smalleqsize \left( -55 \theta^{10} - 24 \theta^{9} - 78 \theta^{8} - 6 \theta^{7} - 5 \theta^{6} - 45 \theta^{5} - 29 \theta^{4} - 128 \theta^{3} - 5 \theta^{2} - 15 \theta + 142 \eta\right) \Tsym{3}  \\
\messylinestart + \left( 293 \theta^{10} + 126 \theta^{9} + 416 \theta^{8} + 28 \theta^{7} + 29 \theta^{6} + 236 \theta^{5} + 160 \theta^{4} + 678 \theta^{3} + 24 \theta^{2} + 75 \theta - 756 \eta\right) \Tsym{2}  \\
\messylinestart + \left( -699 \theta^{10} - 291 \theta^{9} - 1001 \theta^{8} - 42 \theta^{7} - 95 \theta^{6} - 512 \theta^{5} - 419 \theta^{4} - 1585 \theta^{3} - 81 \theta^{2} - 149 \theta + 1785 \eta\right) \Tsym{1}  \\
\messylinestart + \left( 790 \theta^{10} + 314 \theta^{9} + 1146 \theta^{8} + 8 \theta^{7} + 150 \theta^{6} + 494 \theta^{5} + 532 \theta^{4} + 1738 \theta^{3} + 136 \theta^{2} + 126 \theta - 1986 \eta\right)  \\
\approx& \   \smalleqsize \left(4.41793-0.37603i\mvph\right) \Tsym{3}
+ \left(-22.94164+4.84509i\mvph\right) \Tsym{2} +
\left(61.96443-24.09744i\mvph\right) \Tsym{1} \\
\messylinestart + \left(-82.69542+43.48539i\mvph\right).
\end{align*}
From the above we see that $\tk$ is not invariant under mutation.
Since $C$ and $\KT$ have genus 3 and 2 respectively and $\deg (\T_{C})
= 10$ and $\deg( \T_{\KT}) = 6$, we see that Conjecture \ref{conj:top}
holds for both knots.  Also note that the coefficients of these
polynomials are not real, certifying the fact that both knots are
chiral.

\begin{remark}\label{remark:Freps}
  It was shown in \cite[Section~5]{FK06} that twisted Alexander
  polynomials corresponding to representations over finite fields
  detect the genus of all knots with at most twelve crossings.  For
  example, for the Conway knot there is a representation $\a\colon
  \pi_1(X_{C})\to \gl(4,\F_{13})$ such that the corresponding torsion
  polynomial $\T_C^\a \in \F_{13}\tpm$ has degree 14, and hence
\[ x(C) \geq \frac{1}{4}\deg\left( \T_C^\a \right) = 3.5. \]
In particular this shows that $x(C) = 5$ since $x(C) =
2 \genus(C)  - 1 $ is an odd integer.

The calculation using the discrete and faithful $\SL$ representation
is arguably more satisfactory since it gives the equality
\[
x(C) = \frac{1}{2} \deg(\T_C)
\]
on the nose, and not just after rounding up to odd
integers. Interestingly, we have not found an example where this
rounding trick applies to $\tk$; at least for knots with at most 15
crossings one always has $x(K) = \deg(\tk)/2$ (see
Section~\ref{sec:15cross}).
\end{remark}

\subsection{The adjoint representation}\label{section:adj}
For an oriented hyperbolic knot $K$ with distinguished representation
$\a\colon \pi_1(X_K)\to \SL$, we now consider the adjoint representation
\[ \ba{rcl} \a_{\adj}\colon \pi_1(X_K)&\to& \Aut\big(\sltwoc\big)\\
g&\mapsto & A\mapsto \a(g) A \, \a(g)^{-1}\ea \] associated to $\a$.
It is well-known that this representation is also faithful and
irreducible.  Using sign-refined torsion and the orientation on $K$, one
gets an invariant $\tadjk \in \ct$ which is well-defined up to
multiplication by an element of the form $t^k$.  We refer to
\cite{DY09} for details on this construction and for further
information on $\tadjk$; one thing they show is that $\tadjkt =
- \tadjk(t^{-1})$ up a power of $t$, and so $\tadjk$ has odd degree.

For the Conway knot we calculate that
\begin{align*}
\smalleqsize \T_{C}^{\adj}(t) \ \approx& \ \smalleqsize \left(-0.2788 + 16.4072i\mvph\right)\left(t^{13} - 1\right) + \left(-3.9858 - 20.1706i\mvph\right) \left(t^{12} - t\right)
+ \left(-4.2204 - 60.5497i\mvph\right) \left(t^{11} - t^2\right) \\
\messylinestart + \left(52.0953 + 134.5013i\mvph\right) \left(t^{10} - t^3\right)
+ \left(-147.7856 - 46.07448i\mvph\right)\left(t^9 - t^4\right)
+ \left(897.2087+ 62.3265i\mvph\right) \left(t^8 - t^5 \right) \\
\messylinestart + \left(-2465.8556 - 1308.0110i\mvph\right) \left( t^7 - t^6 \right)
\end{align*}
and for the Kinoshita-Terasaka knot we found
\begin{align*}
\smalleqsize \T_{\KT}^{\adj}(t) \ \approx& \ \smalleqsize
\left(-0.7378 + 12.4047i\mvph\right)\left(t^7 - 1\right)
+ \left(29.9408 - 56.5548i\mvph\right)\left(t^6 - t\right) \\
\messylinestart + \left(-655.7823 - 173.0400i\mvph\right)\left(t^5 - t^2\right)
+ \left(2056.7509 + 1678.4875i\mvph\right)\left(t^4 - t^3\right).
\end{align*}
As $\dim \big(\sltwoc\big)=3$, it follows from Theorem \cite[Theorem~1.1]{FK06} that
\begin{equation}\label{eq:CKTadj}
x(K) \geq \frac{1}{3}\deg\left(\tadjkt\right) \mtext{and hence} x(C)
\geq \frac{13}{3}   \mtext{and} x(\KT) \geq \frac{7}{3}.
\end{equation}
Thus using that $x(K)$ is an integer, we get $x(C) \geq 5$ and
$x(\KT) \geq 3$, which are sharp.  Intriguingly, unlike for $\tk$ one
does \emph{not} have equality in (\ref{eq:CKTadj}) for these two
knots.  Below in Section~\ref{sec:adjointdata}, we describe some knots
where $\tk^\adj$ fails to give a sharp bound on $x(K)$ even after
using that $x(K)$ is an odd integer. 

\section{Knots with at most 15 crossings}
\label{sec:15cross}

There are $313{,}231$ prime knots with 15 or fewer crossings
\cite{HTW98}, of which all but 22 are hyperbolic.  For each of these
hyperbolic knots, we computed a high-precision numerical approximation
to $\tk$ (see Section~\ref{sec-comp-details} for details), and this
section is devoted to describing the various properties and patterns
we found.

\subsection{Genus}

The genus bound from $\tk$ given in Theorem~\ref{thm:top} is
sharp for all $313{,}209$ hyperbolic knots with 15 or fewer crossings;
that is, $x(K) = \deg(\tk)/2$ for all these knots.  In contrast, the
ordinary Alexander polynomial fails to detect the genus for $8{,}834$
of these knots, which is $2.8\%$ of the total.

We showed the genus bound from $\tk$ was sharp using the following
techniques to give upper bounds on the genus.  First, for the
alternating knots ($36\%$ of the total), the genus is simply
determined by the Alexander polynomial \cite{Murasugi1958,
  Crowell1959}. For the nonalternating knots, we first did $0$-surgery
on the knot $K$ to get a closed 3-manifold $N$; by Gabai
\cite{Gabai1987}, the genus of $K$ is the same as that of the simplest
homologically nontrivial surface in $N$.  We then applied the method
of Section 6.7 of \cite{DR10} to a triangulation of $N$ to quickly
find a homologically nontrivial surface.  As this surface need not be
minimal genus, when necessary we randomized the triangulation of $N$
until we found a surface whose genus matched the lower bound from
$\tk$.

\subsection{Fibering}

We also found that $\tk$ gives a sharp obstruction to fibering for all
hyperbolic  knots  with at  most  15  crossings.   In particular,  the
$118{,}252$ hyperbolic knots where $\tk$ is monic are all fibered.  In
contrast,  while the  ordinary Alexander  polynomial  always certifies
nonfibering for  alternating knots \cite{Murasugi1963,  Gabai1986},
among the $201{,}702$ nonalternating knots there are $7{,}972$  or
$4.0\%$ whose Alexander polynomials are monic but don't fiber.

To confirm fibering when $\tk$ was monic, we used a slight
generalization of the method of Section 6.11 of \cite{DR10}.  Again by
\cite{Gabai1987}, it is equivalent to show that the 0-surgery $N$ is
fibered.  Starting with the minimal genus surface $S$ found as above, we
split $N$ open along $S$, and tried to simplify a presentation for the
fundamental group of $N \setminus S$ until it was obviously that of a
surface group.  If it is, then it follows that $N \setminus S = S
\times I$ and $N$ is fibered.  The difference with \cite{DR10} is that
we allowed $S$ to be a general normal surface instead of the
restricted class of Figure~6.13 of \cite{DR10}.  We handled this by
splitting the manifold open along $S$ and triangulating the result
using Regina \cite{Regina}.

\subsection{Chirality}
\label{sec:chiral}

For hyperbolic knots with at most 15 crossings, we found that a knot
was amphichiral if and only if $\tk$ had real coefficients.  In
particular, there are 353 such knots with $\tk$ real, and SnapPy
\cite{SnapPy} easily confirms that they are all amphichiral.
(This matches the count of amphichiral knots from Table A1 of
\cite{HTW98}.)

In contrast, the numbers $\tk(1)$ and $\tk(-1)$ do not always detect
chirality.  For example, the chiral knot $10_{153} = 10n10$ has
$\tk(1) = 4$ and $10_{157} = 10n42$ has $\tk(-1) = 576$.  Moreover,
the knot $14a506$ has both $\tk(1)$ and $\tk(-1)$ real.  (This last
claim was checked to the higher precision of 10{,}000 decimal places.)

\subsection{Knots with the same $\tk$}
\label{sec:ex-mutants}
While we saw in Section~\ref{section:ktc} that $\tk$ is \emph{not}
mutation invariant, there are still pairs of knots with the same
$\tk$.  In particular, among knots with at most 15 crossings, there
are $2{,}739$ groups of more than one knot that share the same $\tk$,
namely $2{,}700$ pairs and $39$ triples.  Here, we do not distinguish
between a knot and its mirror image, and having the same $\tk$ means
that the coefficients agree to $5{,}000$ decimal places.  Stoimenow
found there are $34{,}349$ groups of mutant knots among those with
at most 15 crossings, involving some $77{,}680$ distinct knots
\cite{St10}.  Thus there are many examples where mutation changes
$\tk$.  However, all of the examples we found of knots with the same
$\tk$ are in fact mutants.

As mentioned, Menal-Ferrer and Porti \cite{MenalFerrerPorti2011}
showed that the evaluations $\tk(1)$ and $\tk(-1)$ are mutation
invariant.  We found 38 pairs of non-mutant knots with the same
$\tk(1)$ and the same $\tk(-1)$.  Suggestively, several of these pairs
(including the five pairs shown in \cite[Figure~3.9]{DGST2010}, see
also \cite[Tables~2 and~3]{StTa09}) are known to be genus-2 mutants.
We also found a triple of mutually non-mutant knots $\{ 10a121,
12a1202, 12n706\}$ where $\tk(+1) = -4$, and a similar sextet
$\{10n10, 12n881, 13n592, 13n2126, 15n9378, 15n22014\}$ where $\tk(+1)
= 4$; however, within these groups, the value $\tk(-1)$ did not agree.

\subsection{Other patterns}
\label{sec:otherpat}

We found two other intriguing patterns which we are unable to explain.
The first is that the second highest coefficient of $\tk$ is often
real for fibered knots.  In particular, this is the case for $53.1\%$
(62{,}763 of 118{,}252) of the fibered knots in this sample.  In
contrast, the second coefficient is real for only $0.2\%$ (364 of
194{,}957) of nonfibered knots.  (Arguably, the right comparison is
with the lead coefficient of $\tk$ for nonfibered knots; even fewer
($0.05\%$) of these are real.)  For fibered knots, the twisted
homology of the universal cyclic cover can be identified with that of
the fiber; hence the action of a generator of the deck group on this
homology of the cover can be thought of as the action of the monodromy
of the bundle on the twisted homology of the fiber.  The second
coefficient of $\tk$ is then just the sum of the eigenvalues of this
monodromy, but it's unclear why this should often be a real number.

The second observation is that $|\tk(-1)|>|\tk(1)|$ for all but 22 ($<
0.01\%$) of these knots.   The exceptions are nonalternating and
all but one ($15n151121$) is fibered.

\subsection{Adjoint polynomial}
\label{sec:adjointdata}

As discussed in Section~\ref{section:adj}, Dubois and Yamaguchi
\cite{DY09} studied a torsion polynomial $\tk^\adj$ constructed by
composing the holonomy representation with the adjoint representation
of $\PSL$ on its Lie algebra.  We also numerically calculated this
invariant for all knots with at most 15 crossings.  Unlike what we
found for $\tk$, there was not always an equality in the bound
(\ref{eq:CKTadj}).  In fact, some 8{,}252 of these knots had $x(K) >
(1/3)\deg( \tk^\adj)$.  All such knots were non-alternating, and were
among the 8{,}834 knots where $\Delta_K$ fails to give a sharp bound
on $x(K)$.  However, using the trick from Section~\ref{remark:Freps}
that $x(K)$ is an odd integer, the bound on $x(K)$ from $\tk^\adj$ was
effectively sharp in all but 12 cases.  The 12 knots for which
$\tk^\adj$ fails to determine the genus are as follows: there are 7
knots where $x(K) = 7$ (i.e.~genus 4) but $\deg(\tk^\adj) = 15$,
namely 
\[ 
\{15n75595, 15n75615, 15n75858, 15n75883, 15n75948, 15n99458,
15n112466 \}
\]
 and 5 knots with $x(K) = 9$ (i.e.~genus 5) but
$\deg(\tk^\adj) = 21$, namely 
\[ 
\{15n59545, 15n62671, 15n68947, 15n109077, 15n85615\}.
\]
In these 12 cases, we computed $\tk^\adj$ to the higher accuracy of
10{,}000 decimal places.

Intriguingly, the polynomial $\tk^\adj$ did better at providing an
obstruction to fibering; just as for $\tk$, it was monic only for
those knots in the sample that are actually fibered.

\subsection{Computational details}\label{sec-comp-details}

The complete software used for these computations, as well as a table
of $\T_K$ for all these knots, is available at \cite{software}.  The
software runs within Sage \cite{SAGE}, and makes use of SnapPy
\cite{SnapPy} and t3m \cite{t3m}.  It finds very high-precision
solutions to the gluing equations, in the manner of Snap \cite{Snap,
  SnapPaper}, and extracts from this a high-precision approximation to
the distinguished representation.  Except as noted above, we did all
computations with 250 decimal places of precision.  Even at this
accuracy, $\tk$ is fast to compute for these knots, taking only a
couple of seconds each on a late 2010 high-end desktop computer.
However, to save space, only 40 digits were saved in the final table.

To guard against error, two of the authors independently wrote
programs which computed $\T_K$, and the output of these programs were
then compared for all nonalternating knots with 14 crossings.

\section{Twisted torsion and the character variety of a knot}  \label{section:charactervariety}

As usual, consider a hyperbolic knot $K$ in a \ZtwoHS, and let $\pi
\assign \pi_1(X_K)$.  So far, we have focused on the torsion
polynomial of the distinguished representation $\alpha \maps \pi \to
\SL$ coming from the hyperbolic structure.  However, this
representation is always part of a complex curve of representations
$\pi \to \SL$, and it is natural to ask if there is additional
topological information in the torsion polynomials of these other
representations.  In this section, we describe how to understand all
of these torsion polynomials at once, and use this to help explain
some of the patterns observed in Section~\ref{sec:15cross}.  For the
special case of 2-bridge knots, Kim and Morifuji \cite{Mo08,KM10} had
previously studied how the torsion polynomial varies with the
representation, and we extend here some of their results to
more general knots.

To state our results, we must first review some basics about character
varieties; throughout, see the classic paper \cite{CS83} or the survey
\cite{Sh02} for details.  Consider the \emph{representation variety}
$R(K)\assign\hom\big(\pi,\SL\big)$ which is an affine algebraic
variety over $\C$.  The group $\SL$ acts on $R(K)$ by conjugating each
representation; the algebro-geometric quotient $X(K) \assign
R(K)/\!/\,\SL$ is called the \emph{character variety}.  More
concretely, $X(K)$ is the set of characters of representations $\alpha
\in R(K)$, i.e.~functions $\chi_\a \maps \pi \to \C$  of the form
$\chi_\a(g) = \tr\left(\a(g)\right)$ for $g \in \pi$.  When $\alpha$
is irreducible, the preimage of $\chi_\a$ under the projection $R(K)
\to X(K)$ is just all conjugates of $\alpha$, but distinct conjugacy
classes of reducible representations can sometimes have the same
character.  Still, it makes sense to call a character irreducible or
reducible depending on which kind of representations it comes from.

The character variety $X(K)$ is also an affine algebraic variety over
$\C$; its coordinate ring $\C[X(K)]$, which consists of all regular
functions on $X(K)$, is simply the subring $\C[R(K)]^\SL$ of regular
functions on $R(K)$ which are invariant under conjugation. We start by
showing that it makes sense to define a torsion polynomial $\tk^\chi$
for $\chi \in X(K)$ via $\tk^\chi \assign \tk^\a$ for any $\a$ with
$\chi_\a = \chi$.
\begin{lemma}\label{lem:chitor}
  If $\a, \b \in R(K)$ have the same character, then $\tk^\a = \tk\myupbeta$.
\end{lemma}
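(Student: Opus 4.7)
The plan is to split into two cases based on whether the common character $\chi_\alpha = \chi_\beta$ is irreducible or reducible, in each case reducing to the fact that $\T_K^\alpha$ depends only on the conjugacy class of $\alpha$ in $R(K)$.

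First I would observe that conjugation-invariance of $\T_K^\alpha$ is essentially built in: if $\beta = \Psi^{-1} \alpha \Psi$ for some $\Psi \in \mathrm{GL}(2,\C)$, then $\beta \otimes \phi$ is conjugate to $\alpha \otimes \phi$ as representations $\pi \to \mathrm{GL}(2, \F(t))$, so Lemma~\ref{lem:torondata} (taking $h = 0$, $\epsilon = 1$, and using that $\det\circ\alpha \equiv 1$ since $\alpha(\pi)\subset\SL$) gives $\tau(X_K,\beta\otimes\phi,\ss,\omega) = \tau(X_K,\alpha\otimes\phi,\ss,\omega)$, and hence $\T_K^\alpha = \T_K^\beta$.

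For the irreducible case, I would invoke the classical result of Culler and Shalen \cite{CS83} that two irreducible representations $\pi \to \SL$ with the same character are conjugate in $\SL$; combined with the previous paragraph this immediately yields $\T_K^\alpha = \T_K^\beta$. For the reducible case, Theorem~\ref{thm:ktm05}(\ref{item:redone}) replaces each of $\T_K^\alpha$ and $\T_K^\beta$ by the torsion polynomial of the diagonal part, so it suffices to show that two diagonal representations $\pi \to \SL$ with the same character are conjugate. Writing such a diagonal representation as $g \mapsto \mathrm{diag}\big(a(g), a(g)^{-1}\big)$ for a homomorphism $a \maps \pi \to \C^\times$, the character records the unordered pair $\{a(g), a(g)^{-1}\}$, so two such diagonal representations must agree after (if necessary) simultaneously inverting all diagonal entries, which is conjugation by the antidiagonal matrix $\bigl(\begin{smallmatrix}0 & 1 \\ 1 & 0\end{smallmatrix}\bigr)$.

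The main obstacle is essentially bookkeeping: keeping the reducible case clean by appealing to Theorem~\ref{thm:ktm05}(\ref{item:redone}) rather than trying to compare torsions of non-conjugate reducible representations directly, since distinct conjugacy classes of reducible representations can share a character. Once the two cases are organized as above, no further calculation is needed beyond the conjugation-invariance noted in the first paragraph.
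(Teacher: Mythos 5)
Your proof is correct and follows essentially the same route as the paper: split on whether the shared character is irreducible or reducible, use conjugation-invariance of the torsion plus the Culler--Shalen fact that irreducible characters determine representations up to conjugacy in the first case, and invoke Theorem~\ref{thm:ktm05}(\ref{item:redone}) to reduce to diagonal parts in the second. You supply slightly more detail than the paper does (explicitly citing Lemma~\ref{lem:torondata} for conjugation-invariance, and verifying that two diagonal $\SL$-representations with the same character must agree up to a global inverse --- which is correct, since the sets $\{g : a'(g) = a(g)\}$ and $\{g : a'(g) = a(g)^{-1}\}$ are subgroups whose union is all of $\pi$, forcing one of them to be everything), but the structure and key inputs of the argument are the same.
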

\noindent

\begin{proof}[Proof of Lemma~\ref{lem:chitor}]
  As discussed, if $\a$ is irreducible then $\b$ must be conjugate to
  $\a$; hence they have the same torsion polynomial.  If instead $\a$ is
  reducible, then Theorem~\ref{thm:ktm05}(\ref{item:redone}) shows
  that $\tk^\a$ depends only on the diagonal part of $\a$, which can
  be recovered from its character.  Since $\b$ must also be reducible
  and has the same character as $\a$, we again get $\tk^\a = \tk\myupbeta$.
\end{proof}

An irreducible component $X_0$ of $X(K)$ has $\dim_\C(X_0) \geq 1$
since the exterior of $K$ has boundary a torus.  There are two
possibilities for $X_0$: either it consists entirely of reducible
characters, or it contains an irreducible character.  In the latter
case, it turns out that irreducible characters are Zariski open in
$X_0$, and \emph{every} character in $X_0$ is that of a representation
with non-abelian image. As the torsion polynomials of reducible
representations are boring (see Theorem~\ref{thm:ktm05} and the
discussion immediately following), we focus on those components
containing an irreducible character.  We denote the union of all such
components as $X(K)\nab$; equivalently, $X(K)\nab$ is the Zariski
closure of the set of irreducible characters.

It is natural to ask how $\tk^\chi$ varies as a function of $\chi$.
We find:
\begin{chartheorem}
  Let $X_0$ be an irreducible component of $X(K)\nab$.  There is a
  unique $\tkx \in \cxt$ so that for all $\chi \in X_0$ one has $
  \tk^\chi(t) = \tkx(\chi)(t)$.  Moreover, $\tkx$ is itself the
  torsion polynomial of a certain representation $\pi \to \SLF$ and thus
  has all the usual properties (symmetry, genus bound, etc.).
\end{chartheorem}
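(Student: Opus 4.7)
The plan is to construct $\tkx$ as the torsion polynomial of a \emph{tautological representation} defined over the function field of a suitable component of $R(K)$, and then to verify that the resulting Laurent polynomial in $t$ has coefficients which are actually regular, not merely rational, functions on $X_0$.

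Concretely, I would pick an irreducible component $R_0 \subseteq R(K)$ whose image in $X(K)$ is Zariski-dense in $X_0$; such an $R_0$ exists because $X_0 \subseteq X(K)\nab$ contains irreducible characters, and so the irreducible representations form a Zariski open dense subset $R_0\nab \subseteq R_0$. Define $\rhotaut \maps \pi \to \SLCR$ by taking the four matrix entries of $\rhotaut(g)$ to be the corresponding evaluation-at-$g$ regular functions on $R_0$. Extending scalars yields an irreducible representation $\rhotaut \maps \pi \to \mathrm{SL}(2, \cRf)$, so Theorem~\ref{thm:ktm05}(\ref{item:irred}) produces $\T_K^{\rhotaut} \in \cRf\tpm$. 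Lemma~\ref{lem:chitor} applied pointwise shows its coefficients are $\SL$-invariant rational functions on $R_0$ and thus lie in $\cRf^\SL = \C(X_0)$ by standard GIT. Functoriality of the Fox-calculus formula (\ref{equ:comptau}) in the coefficient ring immediately gives $\T_K^{\rhotaut}(\rho)(t) = \T_K^\rho(t)$ for every $\rho \in R_0\nab$ outside the zero locus of the chosen Proposition~\ref{prop:fox} denominator, which upgrades via Lemma~\ref{lem:chitor} to the evaluation identity $\tkx(\chi)(t) = \T_K^\chi(t)$ on a Zariski open dense subset of $X_0$.

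The main obstacle will be promoting the coefficients from $\C(X_0)$ up to $\C[X_0]$ and extending the evaluation identity to reducible characters. For each $\rho \in R_0\nab$, irreducibility lets me pick $g \in [\pi,\pi]$ with $\tr(\rho(g)) \neq 2$ via \cite[Lemma~1.5.1]{CS83}; choosing $x_i = g$ in Proposition~\ref{prop:fox} produces an expression for $\T_K^{\rhotaut}$ whose denominator specializes to the nonzero constant $2 - \tr(\rho(g))$ at $\rho$, and so is a unit in a Zariski neighborhood of $\rho$. Consequently $\T_K^{\rhotaut}$ is regular on all of $R_0\nab$. The reducible characters in $X_0$ form a Zariski closed subset, and on it I would compare the restriction of the would-be $\tkx$ with the explicit product formula of Theorem~\ref{thm:ktm05}(\ref{item:redone})--(\ref{item:redtwo}) in terms of $\chi(\mu)$ and the ordinary Alexander polynomial $\Delta_K$. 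That explicit formula is manifestly a regular Laurent polynomial in $t$ with coefficients in $\C[X_0]$, and it must agree with the extension from $X_0\nab$ by continuity, yielding both regularity across the reducible locus and the full evaluation identity.

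Finally, the ``moreover'' clause is essentially built into the construction: by definition $\tkx$ is the torsion polynomial of $\rhotaut \maps \pi \to \mathrm{SL}(2, \cRf)$, so the symmetry of Theorem~\ref{thm:Tsym}, the genus bound of Theorem~\ref{thm:top}, and the remaining standard properties transfer immediately. If one prefers the ambient field to be $\C(X_0)$ itself rather than $\cRf$, the traces of $\rhotaut$ already lie in $\C(X_0)$, so standard character-variety arguments in the style of Section~1.5 of \cite{CS83} yield a representation $\pi \to \mathrm{SL}(2, \C(X_0))$ (possibly after a quadratic extension) with the same character, hence the same torsion polynomial by Lemma~\ref{lem:chitor}. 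Uniqueness of $\tkx$ is automatic since the coefficients are regular functions on the irreducible variety $X_0$ pinned down by their values on the dense subset $X_0\nab$.
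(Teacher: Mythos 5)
Your overall strategy matches the paper's: build the tautological representation $\rhotaut$ over an irreducible component $R_0$ of $R(K)$ lying above $X_0$, obtain $\tktaut\in\cRf\tpm$ from irreducibility and Theorem~\ref{thm:ktm05}(\ref{item:irred}), and descend to $\cxt$ via conjugation-invariance. The paper additionally invokes \cite[Proposition~1.4.4]{CS83} to get an $R_0$ whose projection actually \emph{surjects} onto $X_0$ (not merely has dense image), which is worth keeping.

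Where you diverge from the paper is the crucial step of promoting the coefficients from rational functions to regular functions, and there is a real gap in your handling of the reducible locus. The paper argues (tersely) that $\tk^\a(t)=\tktaut(\a)(t)$ lies in $\ct$ for \emph{every} $\a\in R_0$, so that the coefficient functions have well-defined values everywhere and hence lie in $\cR$. You instead establish regularity only on the irreducible locus $R_0\nab$ (that part is fine — the $\tr(\rho(g))\neq 2$ trick does make the Fox-calculus denominator a local unit), and then try to get across the reducible locus ``by continuity'' against the formula of Theorem~\ref{thm:ktm05}(\ref{item:redtwo}). This does not work as written. First, the formula $\Delta_K(zt)\Delta_K(z^{-1}t)\big/\bigl(t-(z+z^{-1})+t^{-1}\bigr)$ is a ratio; it is \emph{not} manifestly a Laurent polynomial, and whether the denominator divides the numerator at a given reducible $\chi$ is exactly the kind of thing you would need to prove. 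Second, even granting that, knowing that a rational function is regular on a dense open $X_0\setminus Z$ and that there is some regular function on $Z$ agreeing with its pointwise limits does not make the rational function regular on $X_0$ — you must rule out poles along $Z$, and ``agreement by continuity'' is not an argument in the Zariski topology (nor does it suffice in the classical topology). A clean way to close this is to run Proposition~\ref{prop:fox} with $x_i=\mu$, so the denominator $\det\bigl((\rhotaut\otimes\phi)(\bar\mu-1)\bigr)$ becomes a \emph{monic} Laurent polynomial in $t$ with coefficients in $\cR$; then exactness of the division on the dense set $R_0\nab$ forces the remainder (whose coefficients lie in $\cR$) to vanish identically, so the quotient already has coefficients in $\cR$, with no appeal to limits or to the reducible formula at all.

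The rest of your proposal — invariance under conjugation giving coefficients in $\cx$, the remark that one may pass to a representation over $\C(X_0)$ or a quadratic extension with the same character, and uniqueness from irreducibility of $X_0$ — is correct and in line with the paper.
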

\noindent
We give several explicit examples of $\tkx$ later in
Section~\ref{sec:charexs}. The following result is immediate from
Theorem~\ref{thm:chartorsion}.
\begin{corollary}\label{cor:chartorsion}
 Let $X_0$ be an irreducible component of $X(K)\nab$.  Then
  \begin{enumerate}
    \item For all $\chi \in X_0$, we have $\deg\left( \tk^\chi\right)  \leq \deg
      \left(\tkx\right)$ with equality on a nonempty Zariski open subset.
    \item If $\tkx$ is monic, then $\tk^\chi$ is monic for all $\chi
      \in X_0$.  Otherwise, $\tk^\chi$ is monic only on a proper
      Zariski closed subset.
  \end{enumerate}
\end{corollary}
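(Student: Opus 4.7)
The plan is to deduce Corollary~\ref{cor:chartorsion} directly from Theorem~\ref{thm:chartorsion} by expanding $\tkx \in \cxt$ as an explicit symmetric Laurent polynomial over $\cx$ and tracking what happens under specialization at $\chi \in X_0$. The only structural input beyond Theorem~\ref{thm:chartorsion} is that $X_0$ is an irreducible affine variety over $\C$, so the zero locus of any nonzero regular function on $X_0$ is a proper Zariski closed subset, and, dually, nonempty Zariski open subsets are closed under finite intersection.

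Concretely, I would write $\tkx = \sum_{k=-d}^{d} a_k \, t^k$ with $a_k \in \cx$, $a_{-k} = a_k$ (using the symmetry property asserted in Theorem~\ref{thm:chartorsion}), and $a_d \neq 0$ as a regular function on $X_0$, so that $\deg(\tkx) = 2d$. The defining relation $\tk^\chi(t) = \tkx(\chi)(t)$ then gives, for every $\chi \in X_0$,
\[
\tk^\chi(t) \ = \ \sum_{k=-d}^{d} a_k(\chi)\, t^k.
\]
For part (1), the bound $\deg(\tk^\chi) \leq 2d$ is immediate, with equality exactly on the set $\{\chi \in X_0 : a_d(\chi) \neq 0\}$; this is the complement of the proper Zariski closed subvariety $V(a_d) \subset X_0$ and is therefore a nonempty Zariski open subset.

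For part (2), first suppose $\tkx$ is monic, so $a_d \equiv 1$ on $X_0$; then $a_d(\chi) = 1 \neq 0$ for every $\chi$, so each $\tk^\chi$ has degree $2d$ with lead coefficient $1$ and is therefore monic. In general, $\tk^\chi$ is monic iff there exists $k \in \{-d, \ldots, d\}$ with $a_k(\chi) = 1$ and $a_j(\chi) = 0$ for all $j > k$; the locus cut out by these conditions for a fixed $k$ is Zariski closed, and taking a finite union over $k$ shows that the monic locus is Zariski closed in $X_0$. To see it is proper when $\tkx$ is not monic, observe that then both $a_d$ and $a_d - 1$ are nonzero regular functions on $X_0$, so $a_d(a_d-1)$ is a nonzero regular function whose zero locus is a proper Zariski closed subset; any $\chi$ in the nonempty open complement has $a_d(\chi) \notin \{0,1\}$, so $\tk^\chi$ has degree $2d$ with lead coefficient different from $1$ and hence fails to be monic.

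No step here is a real obstacle; the whole argument is bookkeeping about vanishing loci on an irreducible affine variety, once Theorem~\ref{thm:chartorsion} has placed $\tkx$ inside $\cxt$. The only mild care required is to respect the symmetric form of $\tkx$ and the convention that $\deg$ measures the spread of exponents (so specialization that kills the top coefficient $a_d$ also kills $a_{-d}$), but this enters only cosmetically.
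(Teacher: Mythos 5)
Your proof is correct and is exactly the unwinding the paper intends — the paper simply declares the corollary ``immediate from Theorem~\ref{thm:chartorsion}'' without spelling out the specialization argument, and your bookkeeping about vanishing loci of the coefficient functions $a_k \in \cx$ on the irreducible variety $X_0$ is what ``immediate'' amounts to. One tiny cosmetic note: in your characterization of the monic locus the index $k$ can be restricted to $k \geq 0$ from the start, since by the symmetry $a_{-k} = a_k$ the condition is vacuous for $k < 0$, but this does not affect the argument.
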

\noindent
In particular, when $X_0$ is a curve, the genus bound and
fibering obstruction given by $\tk^\chi$ are the same for all $\chi
\in X_0$ except on a finite set where $\tk^\chi$ provides weaker
information.  We can also repackage Corollary~\ref{cor:chartorsion} as
a uniform statement on all of $X(K)$.
\begin{charcorollary}
  Let $K$ be a knot in an integral homology 3-sphere.   Then
  \begin{enumerate}
  \item The set $\setdef{ \chi \in X(K)}{\deg(\tk^\chi ) = 2 x(K)}$ is Zariski open.
  \item The set $\setdef{ \chi \in X(K)}{\tk^\chi \mbox{ is monic }}$ is Zariski closed.
\end{enumerate}
\end{charcorollary}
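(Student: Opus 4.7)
The plan is to reduce Corollary~\ref{cor:char} to Corollary~\ref{cor:chartorsion} by decomposing $X(K)$ into its finitely many irreducible components $X_1, \ldots, X_n$ and verifying the two claims on each component separately. A subset of the Noetherian space $X(K)$ is Zariski open (respectively closed) if and only if its intersection with each $X_i$ is so, which means it suffices to handle each $X_i$ on its own. The components come in two flavors: those in $X(K)\nab$, and those consisting entirely of reducible characters.

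For $X_i \subseteq X(K)\nab$, I would invoke Theorem~\ref{thm:chartorsion} to obtain the universal polynomial $\tkx \in \cxt$ specializing to $\tk^\chi$ at every $\chi \in X_i$. Since $\tkx$ is itself a torsion polynomial, it satisfies the genus bound $\deg(\tkx) \leq 2x(K)$. Corollary~\ref{cor:chartorsion}(1) says $\{\chi \in X_i : \deg(\tk^\chi) = \deg(\tkx)\}$ is Zariski open, so the set from part (1) of Corollary~\ref{cor:char} intersects $X_i$ either in this open set (when $\deg(\tkx) = 2x(K)$) or in the empty set (when the inequality is strict) --- both of which are Zariski open. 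Part (2) of Corollary~\ref{cor:char} restricted to such $X_i$ is exactly Corollary~\ref{cor:chartorsion}(2).

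For a component $X_i$ consisting entirely of reducible characters, Theorem~\ref{thm:ktm05}(\ref{item:redone}) and (\ref{item:redtwo}) apply because $H_1(X_K;\Z) = \Z$ is torsion-free, yielding the explicit formula
\[
\tk^\chi(t) \ = \ \frac{\Delta_K(zt)\,\Delta_K(z^{-1}t)}{t - (z+z^{-1}) + t^{-1}}, \qquad z + z^{-1} = \chi(\mu).
\]
A short inspection of the leading terms shows that the Laurent expansion of $\tk^\chi$ at $t = \infty$ begins with $c_g^2\, t^{2g-1}$, where $c_g > 0$ is the top coefficient of the symmetrized $\Delta_K$ and $g$ is its top degree; by the symmetry $\tk^\chi(t) = \tk^\chi(t^{-1})$ this pins down the expansion at $0$ as well. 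Hence both the degree and the leading coefficient of $\tk^\chi$ are constant as $\chi$ varies over $X_i$, and each of the sets in question meets $X_i$ either in all of $X_i$ or in the empty set, so is in particular both Zariski open and Zariski closed.

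Assembling the two cases over the finitely many components completes the argument. The main obstacle to beware of is that on a reducible component $\tk^\chi$ is typically a genuine rational function rather than a Laurent polynomial --- the denominator $t^{-1}(t-z)(t-z^{-1})$ divides the numerator only at special parameter values where $\Delta_K(z^{\pm 2}) = 0$ --- so one must fix a convention for ``degree'' and ``monic'' on the reducible locus. The constancy observed in the leading-term calculation makes the final Zariski-topological conclusion insensitive to this choice.
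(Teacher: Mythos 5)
Your proposal is correct and takes essentially the same approach as the paper: decompose $X(K)$ into irreducible components, handle those in $X(K)\nab$ via Corollary~\ref{cor:chartorsion} together with the bound $\deg(\tkx) \leq 2x(K)$, and handle the all-reducible components via Theorem~\ref{thm:ktm05}(\ref{item:redtwo}). Your treatment of the reducible case is somewhat more explicit than the paper's one-line citation of Theorem~\ref{thm:ktm05}(\ref{item:redtwo}) --- the key observation that the degree and leading coefficient of $\tk^\chi$ are constant across such a component, so that the relevant sets intersect it in everything or nothing, is exactly what the paper's brevity leaves implicit.
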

\begin{proof}
  It suffices to consider the intersections of these sets with each
  irreducible component $X_0$ of $X(M)$.  If $X_0$ consists solely
  of reducible characters, the result is immediate from
  Theorem~\ref{thm:ktm05}(\ref{item:redtwo}).  Otherwise, it
  follows from Corollary~\ref{cor:chartorsion} combined with the fact
  that $\deg(\tkx) \leq 2 x(K)$.
\end{proof}

We now turn to the proof of Theorem~\ref{thm:chartorsion}.

\begin{proof}[Proof of  Theorem~\ref{thm:chartorsion}]
  By Proposition 1.4.4 of \cite{CS83}, there is an irreducible
  component $R_0$ of $R(K)$ where the projection $R_0 \to
  X(K)$ surjects onto $X_0$.  Consider the tautological representation
  \[
  \rhotaut \maps \pi \to \SLCR
  \]
  which sends $g \in \pi$ to the matrix $\rhotaut(g)$ of regular
  functions on $R_0$ so that
  \[
  \rhotaut(g)(\a) = \a(g) \mtext{for all $\a \in  R_0$.}
  \]
  Since $R_0$ is irreducible, $\cR$ is an integral domain.
  Thus we can consider its field of fractions, i.e.~the field of
  rational functions $\cRf$.  Working over $\cRf$ there is an
  associated torsion polynomial $\tktaut$ which is in $\cRf\tpm$ since
  $\rhotaut$ is irreducible.  From Lemma~\ref{prop:fox}, it is clear
  that for every $\a \in R_0$ we have $\tk^\a(t) = \tktaut(\a)(t)$ in
  $\ct$.  Hence the coefficients of $\tktaut$ have well-defined values
  at every point $\a \in R_0$, and so lie in $\cR$.  Now since the
  torsion polynomial is invariant under conjugation, each coefficient of
  $\tktaut$ lies in $\cx = \C[R_0]^\SL$, and hence $\tktaut$ descends
  to an element of $\cxt$, which is the $\tkx$ we seek.
\end{proof}

\subsection{The distinguished component}

It is natural to focus on the component $X_0$ of $X(M)$ which
contains the distinguished representation.  In this case $X_0$ is an
algebraic curve, and we refer to it as the \emph{distinguished
  component}.  By Corollary~\ref{cor:chartorsion}, the following
conjecture that $\tkx$ detects both the genus and fibering of $K$ is
implied by Conjecture~\ref{conj:top}.
\begin{charconjecture}
  Let $K$ be a hyperbolic knot in $S^3$, and $X_0$ be the distinguished
  component of its character variety.  Then $2 x(K) = \deg( \tkx )$
  and $\tkx$ is monic if and only if $K$ is fibered.
\end{charconjecture}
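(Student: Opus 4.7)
The plan is to deduce Conjecture~\ref{conj:char} directly from Conjecture~\ref{conj:top} by combining it with the structural results of Theorem~\ref{thm:chartorsion} and Corollary~\ref{cor:chartorsion}. Let $\alpha$ denote the distinguished representation; its character $\chi_\alpha$ lies in $X_0$ by definition, and $\tk = \tk^{\chi_\alpha} = \tkx(\chi_\alpha)$ by Lemma~\ref{lem:chitor} and Theorem~\ref{thm:chartorsion}.

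First I would address the genus equality. The key observation is the last sentence of Theorem~\ref{thm:chartorsion}: $\tkx$ is itself the torsion polynomial of a representation $\pi_K \to \SLF$ over the fraction field $\F$ of $\cx$, and in particular Theorem~\ref{thm:top} (the genus bound) applies to it, giving $\deg(\tkx) \leq 2 x(K)$. Combined with the inequality $\deg(\tk^{\chi_\alpha}) \leq \deg(\tkx)$ from Corollary~\ref{cor:chartorsion}(1) and the assumed equality $\deg(\tk) = 2x(K)$ of Conjecture~\ref{conj:top}, the chain
\[
2 x(K) \;=\; \deg(\tk) \;=\; \deg(\tk^{\chi_\alpha}) \;\leq\; \deg(\tkx) \;\leq\; 2 x(K)
\]
collapses to equalities throughout, so $\deg(\tkx) = 2 x(K)$.

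For the fibering equivalence, the forward direction uses the same trick: if $K$ fibers, then Theorem~\ref{thm:top}, applied to the torsion polynomial $\tkx$ in its own right, forces $\tkx$ to be monic. For the converse, if $\tkx$ is monic, Corollary~\ref{cor:chartorsion}(2) yields that every specialization $\tk^\chi$ is monic; specializing to $\chi_\alpha$ shows $\tk$ is monic, and Conjecture~\ref{conj:top} then forces $K$ to be fibered. There is no real obstacle in the argument beyond citing the right results---the substantive input, beyond Conjecture~\ref{conj:top} itself, is the assertion in Theorem~\ref{thm:chartorsion} that $\tkx$ inherits the usual torsion-polynomial formalism (symmetry, genus bound, and the fibering criterion), together with the upper semicontinuity of degree and the closedness of the monic locus recorded in Corollary~\ref{cor:chartorsion}.
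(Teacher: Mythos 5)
Your argument is correct and fills in the details of what the paper leaves as a terse assertion (``By Corollary~\ref{cor:chartorsion}, the following conjecture\ldots is implied by Conjecture~\ref{conj:top}''). The sandwich
\[
2x(K) \;=\; \deg(\tk) \;=\; \deg\big(\tk^{\chi_\alpha}\big) \;\leq\; \deg(\tkx) \;\leq\; 2x(K)
\]
is exactly the right way to obtain the degree equality, using Corollary~\ref{cor:chartorsion}(1) on the left and the genus bound for $\tkx$ itself (guaranteed by Theorem~\ref{thm:chartorsion}) on the right. For fibering, your reverse direction (monic $\Rightarrow$ fibered) is the clean application of Corollary~\ref{cor:chartorsion}(2) together with Conjecture~\ref{conj:top}. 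One thing worth highlighting: for the forward direction (fibered $\Rightarrow$ $\tkx$ monic) you cannot get away with only Corollary~\ref{cor:chartorsion} plus Conjecture~\ref{conj:top} --- knowing that $\tk^{\chi_\alpha}$ is monic is compatible with $\tkx$ being nonmonic, since $\chi_\alpha$ could \emph{a priori} lie in the exceptional proper Zariski-closed subset. Your choice to instead apply the Goda--Kitano--Morifuji fibering criterion directly to the representation underlying $\tkx$ (as licensed by the ``all the usual properties'' clause of Theorem~\ref{thm:chartorsion}) is the correct and necessary move; note this direction does not actually use Conjecture~\ref{conj:top} at all. So the conjectural input is confined entirely to the reverse implication, which your write-up makes clear even though the paper's phrasing does not.
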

\noindent
As we explain in Section~\ref{sec:2bridge}, this conjecture is true
for many 2-bridge knots.

One pattern in Section~\ref{sec:15cross} is that $\tk$ never gave
worse topological information than the ordinary Alexander
polynomial $\Delta_K$.  In certain circumstances, Corollary~\ref{cor:chartorsion}
allows us to relate $\Delta_K$ to $\tk$ as we now discuss.  First, we
can sometimes show that $\tkx$ must contain at least as much
topological information as $\Delta_K$.
\begin{lemma}\label{lem:consred}
  Let $K$ be a knot in an \ZHS.  Suppose $X_0$ is a component of
  $X(K)\nab$ which contains a reducible character.  Then $\deg(\tkx)
  \geq 2 \deg(\Delta_K) - 2$ and if $\Delta_K$ is nonmonic so is
  $\tkx$.
\end{lemma}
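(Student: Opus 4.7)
The plan is to exploit the presence of a reducible character in $X_0$ to pin down one specific value of $\T_K^\chi$ in closed form using Theorem~\ref{thm:ktm05}, and then push the degree and non-monicity information back up to $\tkx$ via Corollary~\ref{cor:chartorsion}.

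First I would identify the reducible character precisely. Since $Y$ is an integral homology sphere, $H_1(X_K;\Z) = \Z\cdot[\mu]$, so any reducible character is realized by an abelian representation $\beta$ with $\beta(\mu) = \mathrm{diag}(z, z^{-1})$ for some $z \in \C^\times$. A classical deformation result (due originally to de~Rham, and refined by Burns, Heusener, and Porti) says that such a reducible character can lie on an irreducible component of $X(K)$ containing irreducible characters only when $\Delta_K(z^2) = 0$. Applying this to the reducible $\chi_0 \in X_0$ produced by the hypothesis yields such a value $z$ with $\Delta_K(z^2) = 0$.

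Next I would evaluate the torsion polynomial at $\chi_0$ by combining parts~(\ref{item:redone}) and~(\ref{item:redtwo}) of Theorem~\ref{thm:ktm05}, which give
\[
\T_K^{\chi_0}(t) = \frac{\Delta_K(zt)\,\Delta_K(z^{-1}t)}{t - (z+z^{-1}) + t^{-1}}.
\]
The denominator factors as $t^{-1}(t-z)(t-z^{-1})$, and because $\Delta_K(z^2) = \Delta_K(z^{-2}) = 0$ (the second equality by symmetry of $\Delta_K$), the numerator also vanishes at $t = z$ and $t = z^{-1}$, so the quotient is a genuine Laurent polynomial. Reading off leading terms, $\T_K^{\chi_0}$ has span $2\deg(\Delta_K) - 2$ and leading coefficient $a^2$, where $a$ is the leading coefficient of $\Delta_K$.

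Finally I would invoke Corollary~\ref{cor:chartorsion}. Part~(1) gives $\deg(\tkx) \geq \deg(\T_K^{\chi_0}) = 2\deg(\Delta_K) - 2$, establishing the genus-type bound. For the monic assertion I argue contrapositively: if $\tkx$ were monic, then by Part~(2) so would $\T_K^{\chi_0}$, forcing $a^2 = 1$ and hence $a = \pm 1$, contradicting nonmonicity of $\Delta_K$. The main obstacle is the first step, since the fact that reducibles lying on mixed components must have $z^2$ a root of $\Delta_K$ is not quoted earlier in the paper; depending on how rigorously one wants to pin this down, one may need to sketch the standard Fox-calculus / tangent-space computation identifying the obstruction to deforming $\beta$ into irreducible representations with the value $\Delta_K(z^2)$.
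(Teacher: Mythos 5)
Your argument reaches the right conclusion and follows the same overall skeleton as the paper's one-line proof: evaluate the torsion polynomial at the reducible character via Theorem~\ref{thm:ktm05}(\ref{item:redone})--(\ref{item:redtwo}), read off degree and lead coefficient, and then push the conclusion to $\tkx$ via Corollary~\ref{cor:chartorsion}. The one genuine divergence is your invocation of the de~Rham/Burde/Heusener--Porti criterion $\Delta_K(z^2)=0$ to justify that the rational function
\[
\frac{\Delta_K(zt)\cdot \Delta_K(z^{-1}t)}{t-(z+z^{-1})+t^{-1}}
\]
is actually a Laurent polynomial. That is a correct and rather illuminating observation, but it is not needed here and it pulls in a nontrivial external result not quoted in the paper. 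The shortcut the paper relies on (implicitly) is that polynomiality already follows from Theorem~\ref{thm:chartorsion}: since $\chi_0 \in X_0$, we have $\T_K^{\chi_0} = \tkx(\chi_0)$, which lies in $\ct$ because $\tkx \in \cxt$. Once one knows $P/Q \in \ct$, comparing spans in $P = \T_K^{\chi_0}\cdot Q$ forces $\deg\big(\T_K^{\chi_0}\big) = \deg(P) - \deg(Q) = 2\deg(\Delta_K) - 2$ exactly, and comparing leading terms gives lead coefficient $a^2$ where $a$ is the lead coefficient of $\Delta_K$ — no cancellation analysis or root-of-$\Delta_K$ input is required. So your de~Rham detour buys you an explanation of \emph{why} the denominator divides the numerator (worth knowing), at the cost of an extra deformation-theoretic lemma; the paper's route gets polynomiality for free from the machinery already in place. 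The rest of your argument (the contrapositive monicity step via Corollary~\ref{cor:chartorsion}(2)) matches the paper.
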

\begin{proof}
  Let $\alpha$ be a reducible representation whose character lies in
  $X_0$.  By Theorem~\ref{thm:ktm05}(\ref{item:redtwo}), the
  torsion polynomial $\tk^\a$ has degree $2 \deg(\Delta_K) - 2$ and its
  lead coefficient is the square of that of $\Delta_K \in \Z \tpm$.
  The result now follows from Corollary~\ref{cor:chartorsion}.
\end{proof}

Now, consider the distinguished representation $\alpha$ and
distinguished component $X_0 \subset X(K)$.  We say that $\alpha$ is
\emph{sufficiently generic} if $\deg(\tk) = \deg(\tkx)$ and $\tk$ is
monic only if $\tkx$ is.  Corollary~\ref{cor:chartorsion} suggests
that most knots will have sufficiently generic distinguished
representations; however, because the distinguished character takes on
only algebraic number values, there seems to be no a~priori reason why
this must always be the case.  Regardless, our intuition is
that the hypothesis of this next proposition holds quite often:
\begin{proposition}
  Let $K$ be a knot in an \ZHS\ whose distinguished representation is
  sufficiently generic, and whose distinguished component of
  $X(M)$ contains a reducible character.  Then
  $\deg(\tk) \geq 2 \deg(\Delta_K) - 2$ and if $\Delta_K$ is nonmonic
  so is $\tk$.
\end{proposition}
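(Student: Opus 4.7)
The plan is to combine Lemma~\ref{lem:consred} with the definition of ``sufficiently generic'' in an essentially mechanical way, transporting conclusions about the universal polynomial $\tkx$ down to the particular polynomial $\tk$ coming from the distinguished representation.

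First I would verify that Lemma~\ref{lem:consred} applies to the distinguished component $X_0$. This amounts to checking that $X_0 \subset X(K)\nab$. But $X_0$ contains the character of the distinguished representation $\alpha$, which is irreducible since it is a lift of the discrete faithful holonomy representation of a hyperbolic structure. Hence $X_0$ contains an irreducible character, so $X_0 \subset X(K)\nab$, and by hypothesis $X_0$ also contains a reducible character. Lemma~\ref{lem:consred} then yields the two statements
\[
\deg(\tkx) \;\geq\; 2\deg(\Delta_K) - 2 \qquad \text{and} \qquad \Delta_K \text{ nonmonic } \Longrightarrow \tkx \text{ nonmonic}.
\]

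Next I would transfer these conclusions from $\tkx$ to $\tk$ using the genericity hypothesis. By definition, sufficient genericity of $\alpha$ gives the equality $\deg(\tk) = \deg(\tkx)$, so the degree bound for $\tkx$ above becomes the claimed degree bound for $\tk$. For the monicity half, sufficient genericity asserts that $\tk$ monic implies $\tkx$ monic; taking contrapositives, if $\tkx$ is nonmonic then $\tk$ is nonmonic. Chaining this with the implication from Lemma~\ref{lem:consred}, we get $\Delta_K$ nonmonic $\Longrightarrow$ $\tkx$ nonmonic $\Longrightarrow$ $\tk$ nonmonic, as required.

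There is really no substantive obstacle here, since the whole argument is bookkeeping: the content has already been placed into Lemma~\ref{lem:consred} (the comparison between the reducible-character torsion and $\Delta_K$, via Theorem~\ref{thm:ktm05}(\ref{item:redtwo})) and into the definition of sufficiently generic (the specialization from the universal $\tkx$ to a single fiber $\tk$). The only point requiring a sentence of care is the observation that the distinguished component automatically lies in $X(K)\nab$, so that the universal polynomial $\tkx$ of Theorem~\ref{thm:chartorsion} is defined over $X_0$ in the first place.
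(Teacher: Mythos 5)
Your proof is correct and is precisely what the paper intends: the paper states this proposition without an explicit proof because, as you observe, it is an immediate combination of Lemma~\ref{lem:consred} (which gives the bounds for $\tkx$) with the two clauses in the definition of ``sufficiently generic'' (which transfer those bounds to $\tk$), together with the easy observation that the distinguished component lies in $X(K)\nab$. Nothing further is needed.
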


\subsection{2-bridge knots} \label{sec:2bridge}

For 2-bridge knots in $S^3$, Kim and Morifuji previously studied the
torsion polynomial as a function on $X(M)\nab$ in \cite{KM10}.  As
2-bridge knots are alternating, the ordinary Alexander polynomial
$\Delta_K$ determines the genus and whether $K$ fibers
\cite{Murasugi1963, Crowell1959, Murasugi1963, Gabai1986}.  However,
as mentioned, there seems to be no a~priori reason that the same must
be true for $\tkx$.  We now sketch what is known about this special
case, starting with two results from \cite{KM10}.
\begin{theorem}[{\cite[Theorem 4.2]{KM10}}]
  Let $K$ be a hyperbolic 2-bridge knot.  Then there exists a component $X_0
  \subset X(K)\nab$ such that $2 x(K)
  = \deg( \tkx )$ and $\tkx$ is monic if and only if $K$ is fibered.
  In particular, if $X(K)\nab$ is irreducible, then
  Conjecture~\ref{conj:char} holds for $K$.
\end{theorem}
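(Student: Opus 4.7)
The plan is to combine the machinery of Section~\ref{section:charactervariety} with two classical facts about 2-bridge knots. First, since 2-bridge knots are alternating, the Alexander polynomial detects genus and fibering: $x(K) = \deg(\Delta_K) - 1$ (Crowell), and $K$ fibers if and only if $\Delta_K$ is monic (Murasugi). Second, Riley's explicit parameterization of the character variety shows that $X(K)\nab$ has an irreducible component $X_0$ meeting the reducible locus; concretely, at each root $\omega \neq 1$ of $\Delta_K$, the reducible character whose meridian has eigenvalue $\sqrt{\omega}$ is a limit of nonabelian characters along the Riley curve, and hence lies on some positive-dimensional component $X_0 \subset X(K)\nab$.

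With such an $X_0$ chosen, I would sandwich $\deg(\tkx)$. The general genus bound for twisted torsion polynomials (as used in the proof of Corollary~\ref{cor:char}) gives $\deg(\tkx) \leq 2 x(K)$; Lemma~\ref{lem:consred}, using that $X_0$ contains a reducible character, gives $\deg(\tkx) \geq 2 \deg(\Delta_K) - 2$; and the Crowell identity $x(K) = \deg(\Delta_K) - 1$ pinches these to the equality $\deg(\tkx) = 2 x(K)$.

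The monic criterion follows by an analogous sandwich. If $K$ fibers, the standard fibered criterion for twisted torsion (the source of the monic claim in Theorem~\ref{thm:top}) applied to the tautological representation on $X_0$ gives that $\tkx$ is monic. Conversely, if $\tkx$ is monic then the contrapositive of Lemma~\ref{lem:consred} forces $\Delta_K$ to be monic, and Murasugi's fibering criterion for 2-bridge knots then yields that $K$ is fibered. This proves the first assertion; the second is immediate, since irreducibility of $X(K)\nab$ forces $X_0$ to be the unique nonabelian component, which is then the distinguished one (it must contain the nonabelian distinguished character), so Conjecture~\ref{conj:char} holds for $K$.

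The main obstacle is the Riley-type input: producing an irreducible component $X_0 \subset X(K)\nab$ that genuinely meets the reducible locus, rather than only along a spurious lower-dimensional stratum. For 2-bridge knots this is well-known from the explicit, essentially one-variable form of Riley's polynomial, whose discriminant with the abelian locus recovers the roots of $\Delta_K$. It is precisely this structure that fails to generalize beyond the 2-bridge setting, and is one of the main reasons Conjecture~\ref{conj:char} remains open in greater generality.
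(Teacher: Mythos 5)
Your proof is essentially the paper's own argument, just spelled out in more detail. The paper's proof is terse: it notes that $\Delta_K$ detects genus for 2-bridge knots (so has a nontrivial root), recalls that a root of $\Delta_K$ yields a nonabelian reducible representation, cites Burde and Hilden--Lozano--Montesinos for the fact that (for 2-bridge knots) the character of such a representation lies on a component $X_0$ of $X(K)\nab$, and then says ``Lemma~\ref{lem:consred} now finishes the proof.'' Your proof uses the same structure; your sandwich $\deg(\tkx) \leq 2 x(K) \leq \deg(\tkx)$, the deduction ``$\tkx$ monic $\Rightarrow \Delta_K$ monic $\Rightarrow K$ fibered,'' and the converse via the fibered criterion applied to the tautological representation are exactly what the paper's ``finishes the proof'' is shorthand for. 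The only cosmetic difference is that you invoke Riley's explicit parameterization rather than the Burde/Hilden--Lozano--Montesinos references; both establish the same key input, namely that every nonabelian reducible character of a 2-bridge knot group is a limit of irreducible characters.
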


\begin{proof}
  Since $\Delta_K$ detects the genus, it is nonconstant and so has a
  nontrivial root.  For any knot in an \ZHS, a root of $\Delta_K$ gives
  rise to a reducible representation $\pi_K \to \SL$ with nonabelian
  image.  In the case of 2-bridge knots, the character of any such
  representation belongs to a component $X_0 \subset X(M)\nab$ (see
  Remark 1.9 and Corollary 2.9 of \cite{HildenLozanoMontesinos96},
  originating in Proposition 2.3 and the comment following it in
  \cite{Burde1990}), and Lemma~\ref{lem:consred} now finishes the
  proof.
\end{proof}

\begin{theorem}[{\cite[Lemma~4.8 and Theorem 4.9]{KM10}}]\label{thm:manytwobridge}
  Let $K = K(p, q)$ be a hyperbolic 2-bridge knot, and let
  $c$ be the lead coefficient of $\Delta_K$.  Suppose there exists a
  prime divisor $\ell$ of $p$ so that if $c \neq \pm 1$ then the
  reduction of $c$ mod $\ell$ is not in $\{-1, 0, 1\}$.  Then
  Conjecture~\ref{conj:char} holds for $K$.
\end{theorem}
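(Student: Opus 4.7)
The plan is to deduce this from the preceding Theorem~4.2 by showing that, under the hypothesis on $c\bmod\ell$, the full variety $X(K)\nab$ is irreducible. Once this is known, Theorem~4.2 has only one component to select, so it must be the distinguished one, and Conjecture~\ref{conj:char} for $K$ follows immediately from the ``in particular'' clause of that theorem.

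To prove irreducibility I would use the Riley-polynomial description of the $\SL$-character variety of a 2-bridge knot. The group $\pi_K$ for $K=K(p,q)$ has a standard two-generator, one-relator presentation, and $X(K)\nab$ is cut out in an affine plane (with coordinates $y=\tr\alpha(\mu)$ and an auxiliary trace $u$) by a single Riley polynomial $\Phi_K(y,u)\in\Z[y,u]$, with components of $X(K)\nab$ in bijection with the irreducible factors of $\Phi_K$ over $\Q$. Restricting to the parabolic slice $y=2$ produces a polynomial $\Phi_K(2,u)$ whose roots correspond to reducible non-abelian characters; by Riley's classical identity these correspond to the roots of $\Delta_K$, and in particular the lead coefficient of $\Phi_K(2,u)$ is governed, up to sign, by the lead coefficient $c$ of $\Delta_K$.

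The core step is a reduction-modulo-$\ell$ argument. If $\Phi_K$ were reducible in $\Q[y,u]$, then by Gauss's lemma $\Phi_K(2,u)$ would admit a nontrivial factorization in $\Z[u]$, yielding a corresponding factorization of $\Delta_K$. Exploiting the special arithmetic of Riley's polynomial at primes $\ell$ dividing the bridge parameter $p$, one then shows that any such factorization forces a very restricted reduction behaviour for the lead coefficient: either $c\equiv 0\pmod\ell$ (the top term drops), or the product of the lead coefficients of the factors reduces to $\pm 1\pmod\ell$. Both conclusions are excluded by the hypothesis unless $c=\pm 1$ already; in the latter case $\Delta_K$ is monic, so $K$ is fibered, and one argues separately that $\Phi_K$ remains irreducible because $\Phi_K(2,u)$ has no nontrivial proper monic integer divisor of positive degree.

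The main obstacle, and the technical heart of the proof, is the mod-$\ell$ rigidity of Riley's polynomial at primes dividing the bridge parameter $p$---essentially a Newton-polygon analysis showing that the only ways $\Phi_K$ can factor modulo $\ell$ are those producing the three degenerate residues $\{-1,0,1\}$ of $c$. Granting this rigidity, the hypothesis $c\bmod\ell\notin\{-1,0,1\}$ rules out all factorizations, so $\Phi_K$ is irreducible, $X(K)\nab$ is a single component, and the conjecture follows from Theorem~4.2.
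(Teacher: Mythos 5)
Your strategy rests on the claim that the hypothesis on $c \bmod \ell$ forces $X(K)\nab$ to be a single irreducible component, after which Theorem~4.2's ``in particular'' clause finishes the job. This is not what the paper does, and the irreducibility claim itself is false in general. Note that when $c = \pm 1$ (the fibered case) the hypothesis is satisfied \emph{vacuously} by any prime divisor $\ell$ of $p$, so your argument would require $X(K)\nab$ to be irreducible for \emph{every} fibered hyperbolic $2$-bridge knot. That is known to fail: the Riley polynomial whose irreducible factors parameterize the components of $X(K)\nab$ can and does factor over $\Q$ for various $2$-bridge knots, fibered ones included. Indeed, the very existence of the ``in particular, if $X(K)\nab$ is irreducible'' clause in Theorem~4.2 signals that irreducibility is not automatic and that a different argument is required to get Conjecture~\ref{conj:char} unconditionally under the stated arithmetic hypothesis. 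There is also a more local problem in your sketch: specializing $\Phi_K(y,u)$ at $y=2$ does not preserve irreducibility (a factor could become a unit or could itself factor), so even a proved irreducibility of $\Phi_K(2,u)$ would not settle irreducibility of $\Phi_K(y,u)$, and conversely reducibility of $\Phi_K$ need not descend to a nontrivial factorization of $\Phi_K(2,u)$.

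The paper's actual route avoids any global irreducibility statement. It argues component by component: using Claim~\ref{claim:trval} (a Culler--Shalen ideal-point argument, applicable because $2$-bridge knot exteriors contain no closed essential surfaces), every component $X_0 \subset X(K)\nab$ contains a character $\chi$ with $\tr\chi(\mu_K) = 0$. Such a $\chi$ comes from an irreducible but metabelian representation, for which $\tk^\chi$ can be computed explicitly in the $2$-bridge case; the hypothesis on $c \bmod \ell$ is exactly what makes this explicit polynomial non-monic (when $K$ is nonfibered) and of degree $2x(K)$. Corollary~\ref{cor:chartorsion} then forces the same degree and the same monicity verdict for $\tkx$ on \emph{every} $X_0$, in particular on the distinguished component, which is Conjecture~\ref{conj:char}. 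To repair your approach you would have to replace the irreducibility claim by the per-component argument, i.e.\ exhibit a suitable witness character in each component rather than try to rule out extra components.
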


\begin{proof}[Proof sketch]
  Let $X_0$ be any component of $X(K)\nab$. First, one shows that
  $X_0$ contains a character $\chi$ where $\tr(\mu_K) = 0$.  In
  Lemma~4.6 of \cite{KM10} this is shown using the particular
  structure of $\pi_K$, and it also follows from the following more
  general fact:
  \begin{claim} \label{claim:trval}
   Let $K$ be a knot in $S^3$ whose exterior contains no closed
    essential surface.  If $X_0$ is a component of $X(M)$, then given
    $a \in \C$ there is a $\chi \in X_0$ where $\tr(\mu_K) = a$.
 \end{claim}
 Two-bridge knots satisfy the hypothesis of Claim~\ref{claim:trval} by
 \cite{HatcherThurston1985}, and the proof of the claim
 is straightforward from the Culler-Shalen theory of surfaces
 associated to ideal points of $X_0$.  Specifically, on the smooth
 projective model of $X_0$ the rational function $\tr(\mu_K)$ takes on the
 value $a$ somewhere, and if this were at an ideal point the
 associated essential surface would have to be either closed or have
 meridian boundary; the latter situation also implies the existence of a closed
 essential surface by \cite[Theorem~2.0.3]{CGLS}.

 The representation corresponding to a $\chi$ where $\tr(\mu_K) = 0$
 is irreducible but has metabelian image, and in the 2-bridge case
 one can use this to calculate $\tk^\chi$ explicitly.  In particular,
 in \cite{KM10} they find that, provided there exists a prime
 $\ell$ as in the hypothesis, the polynomial $\tk^\chi$ is non-monic
 and $\deg \tk^\chi = 2 x(K)$.   We then apply
 Corollary~\ref{cor:chartorsion} to see that
 Conjecture~\ref{conj:char} holds.
\end{proof}

Another interesting class of characters in $X(M)\nab$ are those of
representations where $\mu_K$ is parabolic (e.g.~the distinguished
representation); such parabolic representations must occur on any
component $X_0$ by Claim~\ref{claim:trval}.  For the $3{,}830$
non-fibered 2-bridge knots with $q < p \leq 287$, we numerically
computed $\tk^\chi$ for all such parabolic characters, using a
precision of 150 decimal places.  In every case, the polynomial
$\tk^\chi$ was nonmonic and gave a sharp genus bound.  Since 2-bridge
knots contain no closed essential surfaces, every component of $X(M)$
is a curve.  Thus for all 2-bridge knots with $p \leq 287$ there are
only finitely many $\chi \in X(M)$ where $\tk^\chi$ is monic or where
$\deg(\tk^\chi) < 2 \deg(\Delta_K) - 2$, as conjectured in
\cite{KM10}.

\section{Character variety examples}\label{sec:charexs}

As with many things related to the character variety, while $\tkx$ is a
very natural concept, actually computing it can be difficult.  Here,
we content ourselves with finding $\tkx$ for three of the simplest
examples.  In each case, there is only one natural choice for $X_0$,
and moreover it is isomorphic to $\C \setminus \{ \mbox{finite set}
\}$.  Thus $X_0$ is rational and $\C(X_0)$ is just rational functions
in one variable, which makes it easy to express the answer.  We do one
fibered example and two that are nonfibered; in all cases the simplest
Seifert surface has genus 1.

\subsection{Example: m003}

We start with the sibling $M$ of the figure-8 complement, which is one
of the two orientable cusped hyperbolic \3-manifolds of minimal
volume.  The manifold is $m003$ in the SnapPea census
\cite{CallahanHildebrandWeeks1999,SnapPy}, and is the once-punctured
torus bundle over the circle with monodromy
$\mysmallmatrix{-2}{1}{1}{-1}$.  Its homology is $H_1(M ; \Z) = \Z
\oplus \Z/5\Z$, and it is, for instance, the complement of a
null-homologous knot in $L(5,1)$.  After randomizing the triangulation
a bit, SnapPy gives the following presentation
\[
\pi \assign \pi_1(M) = \spandef{a,b}{ bab^3aba^{-2} = 1}.
\]
We will view $X(\pi)$ as a subvariety of $X\big(\FreeGroup{a,b}\big)$,
where $\FreeGroup{a,b}$ is the free group on $\{a,b\}$.  Now
$X\big(\FreeGroup{a,b}\big) \cong \C^3$ where the coordinates are
$(x,y,z) = \big(\tr(a), \tr(b), \tr(ab)\big)$; this is because the
trace of any word $w \in \FreeGroup{a,b}$ can be expressed in terms of
these coordinates using the fundamental relation $\tr(UV) = \tr(U)
\tr(V) - \tr(U V^{-1})$ for $U, V \in \SL$.  Since $\pi$ is defined by
the single relator $R = bab^3aba^{-2}$, the character variety $X(\pi)$
is cut out by the polynomials corresponding to $\tr(R) = 2$, $\tr\big(
[a, R] \big) = 2$, and $\tr\big( [b, R] \big) = 2$.  Using Gr\"obner
bases in \cite{SAGE} to decompose $X(\pi)$ into irreducible components
over $\Q$, we found a unique component $X_0$ which contains an
irreducible character, i.e.~contains a point where $\tr\big( [a,b]
\big) \neq 2$.  Explicitly, the ideal of $X_0$ is $\pair{yz - x - z,
  xz + 1}$ and hence $X_0$ can be bijectively parameterized by
\[
f \maps \C \setminus \{ 0 \} \to X_0  \mtext{where} f(u) = \left( u, 1
  - u^2, -1/u \right).
\]
To compute $\tkx$, we consider the curve $R_0 \subset R(\pi)$ lying
above $X_0$ consisting of representations of the form
\[
\rho(a) = \left( \begin{array}{cc} u & 1 \\ -1 & 0 \end{array} \right)
\mtext{and} \rho(b) =  \left( \begin{array}{cc} 0 & v \\ -v^{-1}
    & 1- u^2 \end{array} \right) \mtext{where $v + v^{-1} = u^{-1}$.}
\]
Such representations are parameterized by $v \in \C \setminus \{0\}$,
and hence $\C(R_0) \cong \C(v)$ and we have an explicit $\pi \to
\GL\left(2, \C(v)\right)$ which is the restriction of the tautological
representation.  Using Lemma~\ref{prop:fox}, we find the torsion
polynomial of this representation is
\[
t - \frac{2 \left(v^{4} + v^{2} + 1\right)}{v^{3} + v}  + t^{-1}.
\]
Substituting in $v = \pm \big(1 - \sqrt{-4 u^2 + 1}\big)\big/2u$ to
eliminate $v$ gives the final answer
\[
\tkxt = t + \frac{2\left(u^{2} - 1\right)}{u} + t^{-1}.
\]

\subsection{Example: m006}

The census manifold $M = m006$ can also be described as $5/2$ surgery
on one component of the Whitehead link $L$.  (Here our conventions are
such that $+1$ surgery on either component of $L$ gives the trefoil
knot whereas $-1$ surgery gives the figure-8 knot).  Thus $M$ is, for
instance, the complement of a null-homologous knot in the lens space
$L(5,2)$ and again $H_1(M ; \Z) = \Z \oplus \Z/5\Z$. Using
spun-normal surfaces, it is easy to check via \cite{t3m} that there is
a Seifert surface in $M$ which has genus one with one boundary component.

SnapPy gives the presentation
\[
\pi \assign \pi_1(M) = \spandef{a,b}{ b^2abab^2a^{-2} = 1}.
\]
Changing the generators to $a_{\mathit{new}} = a^{-1}$ and
$b_{\mathit{new}} = ab$ rewrites this as
\[
\pi =  \spandef{a,b}{ a^3 b a b^3 a b = 1}.
\]
Using the same setup as in the last example, we find a single component
$X_0$ containing an irreducible character.  The ideal of $X_0$ is
$\pair{ x - y, y^2 z - z - 1}$ and hence we can bijectively
parameterize $X_0$ by
\begin{equation} \label{eq:m006param}
f \maps \C \setminus \{1, -1\}  \to X_0 \mtext{where} f(u) = \big(u, u,
(u^2-1)^{-1} \big).
\end{equation}
Considering the curve of representations given by
\[
\rho(a) = \left( \begin{array}{cc} v & 1 \\ 0 & v^{-1} \end{array} \right)
\mtext{and} \rho(b) =  \left( \begin{array}{cc} v^{-1} & 0\\ \frac{3 -
      2u^2}{u^2 -1}
    & v \end{array} \right) \mtext{where $v + v^{-1} = u$}
\]
and again directly applying Lemma~\ref{prop:fox} and eliminating $v$ gives
\[
\tkx(t) = \frac{2 u^{2} - 1}{u^{2} - 1} \left(t + t^{-1}\right) + \frac{2 u^{3}}{u^{2} - 1}.
\]

\subsection{m037} The census manifold $M = m037$ has $H_1(M ; \Z) = \Z
\oplus \Z/8\Z$, and so is not a knot in a $\Z/2$-homology sphere.
However, this makes no difference in this character variety context.
Again, using spun-normal surfaces one easily checks that there is a
Seifert surface in $M$ which has genus one with one boundary component.
Now $\pi = \spandef{a,b}{a^3ba^2ba^3b^{-2}=1}$, and this time, there
are two components of $X(\pi)$ containing irreducible characters.
However, one of these consists entirely of metabelian representations
which factor through the epimorphism $\pi \to \Z/2 * \Z/2 =
\spandef{c,d}{c^2=d^2=1}$ given by $a \mapsto c$ and $b \mapsto d$.
Focusing on the other component $X_0$, it turns out the ideal is
$\pair{xz - 2y, 4 y^2 - z^2 - 4}$ and so we can parameterize $X_0$ by
\[
f \maps \C \setminus \{ -2, 0, 2\} \to X_0 \mtext{where} f(u) = \left(
  \frac{u^2+4}{4u} , \frac{u^2 + 4}{u^2 - 4}, \frac{8u}{u^2 - 4}\right)
\]
and then calculate
\[
\tkx(t) = \frac{{\left(u + 2\right)}^4}{16 u^2} \left( t +
  t^{-1}\right) + \frac{{\left(u + 2\right)} {\left(u^{4} + 4  u^{3} - 8  u^{2} + 16  u + 16\right)}}{8 \, {\left(u - 2\right)} u^{2}}.
\]

\subsection{The role of ideal points.}  A key part of the
Culler-Shalen theory is the association of an essential surface in the
manifold $M$ to each ideal point of a curve $X_0 \subset X(M)$.  The
details are in e.g.~\cite{Sh02}, but in brief consider the smooth
projective model $\Xhat$ with its rational map $\Xhat \to X_0$.  Now
$\Xhat$ is a smooth Riemann surface, and the finite number of points
where $\Xhat \to X_0$ is undefined are called the \emph{ideal points}
of $X_0$.  To each such point $x$, there is an associated non-trivial
action of $\pi \assign \pi_1(M)$ on a simplicial tree $T_x$.  One then
constructs a surface $S$ in $M$ dual to this action, which can be
taken to be essential (i.e.~incompressible, boundary incompressible,
and not boundary parallel).  As minimal complexity Seifert surfaces
often arise from an ideal point of some $X_0$, a very natural idea is
thus to try to use such an ideal point $x$ to say something about
$\tkx$.  Moreover, provided that $X_0 \subset X(M)\nab$, a surface
associated to an ideal point is never a fiber or semifiber, which
suggests that one might hope to prove non-monotonicity of $\tkx$ by
examining $\tkx(x)$.  Thus, we now compute what happens to $\tkx$ at
such ideal points in our two non-fibered examples $m006$ and $m037$.
(Aside: It is known that even for knots in $S^3$ not all boundary
slopes need arise from ideal points \cite{ChesebroTillmann2007}, so it
is probably too much to expect that there is always an ideal point
which gives a Seifert surface.)

\subsection{Ideal points of $m006$}  If we view the parameterization
(\ref{eq:m006param}) above as a rational map from $\POne \to X_0$, we
have ideal points corresponding to $u \in \{-1, 1, \infty\}$.  To
calculate the boundary slopes of the surfaces associated to each of
these, we consider the trace functions of SnapPy's preferred basis
$\mu, \lambda$ for $\pi_1(\partial M)$.  In our presentation for
$\pi$, we calculate
\begin{align*}
\tr( \mu ) &= \tr( a^2 b a b ) = x \left(z^2 - z - 1\right) =
-\frac{u{\left(u^{4} - u^{2} - 1\right)}}{{\left(u - 1\right)}^{2} {\left(u + 1\right)}^{2}}  \\
\tr(\lambda ) &= \tr(a^3 b a) =-x^3 - x z + 2 x = -\frac{u {\left(u^{4} - 3 \, u^{2} + 3\right)}}{{\left(u - 1\right)} {\left(u + 1\right)}} \\
\tr(\mu\lambda) &= x^4 - 2 x^2 - z + 1 = \frac{{\left(u^{2} - 2\right)} {\left(u^{4} - u^{2} + 1\right)}}{{\left(u - 1\right)} {\left(u + 1\right)}}.
\end{align*}
Now, consider an ideal point $x$ with associated surface $S$, and pick
a simple closed curve $\gamma$ on $\pi_1(\partial M)$.  Then the
number of times $\gamma$ intersects $\partial S$ is twice the order of
the pole of $\tr(\gamma)$ at $x$ (here, if $\tr(\gamma)$ has a zero of
order $m$ at $x$, this counts as a pole of order $0$, not one of order
$-m$).  The above formulae thus show that the points $u=1$ and $u=-1$
give surfaces with boundary slope $\mu \lambda^2$, whereas $u=\infty$
gives one with boundary slope $\mu^3 \lambda^{-1}$.  The latter is the
homological longitude, and as there is only one spun-normal surface
with that boundary slope and each choice of spinning direction, it
follows that the surface associated to $\xi = \infty$ must be the
minimal genus Seifert surface.  Thus we're interested in
\[
\tkx(u=\infty)(t) = 2 \left( t + t^{-1} \right) + (\mbox{simple pole}) t^0.
\]

\subsection{Ideal points of $m037$}\label{sec:m037}

This time we have four ideal points corresponding to $u=-2,2,0,
\infty$.  We find
\begin{align*}
\tr(\mu) &= \tr(a^2 b a^3) = \frac{u^{8} - 48  u^{6} + 96  u^{4} - 768  u^{2} + 256}{64
    {\left(u - 2\right)} {\left(u + 2\right)} u^{3}} \\
\tr(\lambda) &= \tr(a^{-1} b a^3 b) \\
 &=  -\frac{u^{12} - 72  u^{10} + 1264  u^{8} - 12032  u^{6} + 20224
   u^{4} - 18432  u^{2} + 4096}{256  {\left(u - 2\right)}^{2} {\left(u
       + 2\right)}^{2} u^{4}} \\
\tr(\mu \lambda) &= -\frac{u^{8} - 16  u^{6} + 352  u^{4} - 256  u^{2} + 256}{4  {\left(u - 2\right)}^{3} {\left(u + 2\right)}^{3} u}.
\end{align*}
Hence $\{2, -2\}$ give surfaces with boundary slope $\mu^2
\lambda^{-1}$ and $\{0,\infty \}$ give surfaces with boundary slope
$\mu^4 \lambda^{3}$.  In fact, the homological longitude is $\mu^2
\lambda^{-1}$ and again using spun-normal surfaces one easily checks
that surfaces associated to $\{2, -2\}$ are the minimal genus Seifert
surface.  Thus, we care about
\[
\tkx(u=2)(t) = 4 \left(t + t^{-1}\right) + (\mbox{simple pole}) t^0
\mtext {and} \tkx(u=-2)(t) = 0.
\]
\subsection{General picture for ideal points}

Based on the preceding examples and a heuristic calculation for
tunnel-number one manifolds, we posit:
\begin{conjecture}\label{conj:ideal}
  Let $K$ be a knot in a rational homology 3-sphere, and $X_0$ a
  component of $X(K)\nab$.  Suppose $x$ is an ideal point of $X_0$
  which gives a Seifert surface (hence $K$ is nonfibered).  Then the
  lead coefficient of $\tkx$ has a finite value at $x$.
\end{conjecture}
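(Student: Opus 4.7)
The strategy is to combine Culler-Shalen's construction of the surface $S$ at the ideal point $x$ with a decomposition of the twisted chain complex of $M$ coming from the splitting of $\pi = \pi_1(M)$ dual to $S$. The finiteness of the lead coefficient at $x$ will then follow from integrality of the relevant matrices with respect to the valuation at $x$, using that $S$ being a Seifert surface forces the splitting to be an HNN extension with stable letter dual to $\phi$.

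First I would work locally at $x \in \Xhat$. The local ring $\OO_{\Xhat,x}$ is a DVR with valuation $v_x$ on $K \assign \C(X_0) = \C(\Xhat)$. After a suitable finite base change the tautological representation, base-changed to $K$, yields a representation $\rho_x \maps \pi \to \mathrm{SL}(2, K)$. Bass-Serre theory applied to $v_x$ furnishes an action of $\pi$ on a simplicial tree $T_x$, whose dual essential surface in $M$ is the hypothesized Seifert surface $S$. Because $S$ has non-zero algebraic intersection with $\mu$ it is non-separating, so I may arrange the associated splitting as an HNN extension with base group $A = \pi_1(M \sm S)$, associated subgroup $C = \pi_1(S)$ embedded in $A$ via the two inclusions $S_\pm \hookrightarrow M \sm S$, and stable letter $\tau$ with $\phi(\tau) = 1$ and $\phi|_A = 0$.

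Next I would compute $\tkx$ via the short exact sequence of twisted chain complexes
\[
0 \to C_*^{\rho|_C}(S) \otimes K\tpm \xrightarrow{\; \iota_+ \otimes 1 \; - \; \iota_- \otimes t \;} C_*^{\rho|_A}(N) \otimes K\tpm \to C_*^{\rho \otimes \phi}(M) \to 0,
\]
where $N = M \sm S$ and $\iota_\pm$ are the chain maps induced by $S_\pm \hookrightarrow N$. Multiplicativity of torsion, together with the vanishing of the twisted homology of $S$ and $N$ that holds generically on $X_0\nab$, yields a formula expressing $\tkx$ as a product of the torsion of $N$ with a determinant of a $(2x(K)) \times (2x(K))$ matrix over $K\tpm$ whose top $t$-coefficient is, up to sign, the determinant of the chain map $\iota_-$. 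By the Bass-Serre picture, after conjugating in $\mathrm{SL}(2, K)$ the subgroup $\rho_x(A)$ stabilizes a rank-two $\OO_{\Xhat,x}$-lattice in $K^2$ corresponding to the vertex of $T_x$ fixed by $A$; choosing a basis of that lattice puts every entry of $\rho_x(g)$ for $g \in A$ into $\OO_{\Xhat,x}$, and hence every entry of the matrices representing $\iota_\pm$ as well. Consequently both the $\iota_-$-determinant and the torsion factor of $N$ lie in $\OO_{\Xhat,x}$, and the lead coefficient of $\tkx$ is regular at $x$ as claimed.

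The principal obstacle is justifying the clean multiplicative formula in the non-fibered case. When $S$ is a fiber, $N \cong S \times I$, the $\iota_\pm$ are isomorphisms, all intermediate twisted homology vanishes, and one recovers the classical mapping-torus expression $\det(t \cdot I - \mathrm{monodromy})$ with lead coefficient identically $1$. When $S$ is merely a Seifert surface, $C \hookrightarrow A$ is not $\pi_1$-surjective, the twisted homology of the pair $(N, S_+ \sqcup S_-)$ need not vanish at every point of $X_0$, and the intended formula becomes genuinely rational, with potential cancellations between numerator and denominator that could a~priori shift the effective lead coefficient. Ruling out the scenario in which such a cancellation at $x$ relocates a pole onto the surviving lead coefficient is the delicate technical point; handling it likely requires a refined analysis of the full $\OO_{\Xhat,x}$-lattice preserved by $\rho_x|_A$ together with non-vanishing input in the spirit of Menal-Ferrer and Porti.
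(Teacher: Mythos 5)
This is a \emph{conjecture} in the paper, not a theorem: the authors explicitly state that Conjecture~\ref{conj:ideal} is only supported ``by the preceding examples and a heuristic calculation for tunnel-number one manifolds,'' and the paragraph following it discusses how one \emph{might} try to prove it using the valuation $v_x$ and the lattice preserved by the vertex stabilizers. So there is no ``paper's own proof'' to compare against; your proposal should be judged as an attempt at an open problem.

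Your strategy is well aligned with the direction the paper gestures at --- indeed, viewing $x$ as a valuation on $\C(R_0)$, using the Bass--Serre tree, and exploiting the fact that $\rho_x(A) \subset \mathrm{SL}(2,\OO_{\Xhat,x})$ for $A = \pi_1(M \setminus S)$ is essentially the ``abstract situation'' spelled out at the end of Section~7. However, as written the argument has genuine gaps beyond the one you flag. First, you assert ``the vanishing of the twisted homology of $S$ and $N$ that holds generically''; this is false in degree one. For a Seifert surface of genus $g$ one has $\chi(S) = 1 - 2g$, and an Euler characteristic count shows $\dim_K H_1^{\rho}(S) = \dim_K H_1^{\rho}(N) = 2x(K)$ generically, so both complexes are non-acyclic and the multiplicativity of torsion requires a choice of bases for these homology groups, introducing extra transition determinants. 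Second, and more seriously, the conclusion that ``the torsion factor of $N$ lies in $\OO_{\Xhat,x}$'' does not follow from the integrality of the entries of $\rho_x|_A$. Torsion is an alternating ratio of determinants, and the ratio of two elements of $\OO_{\Xhat,x}$ can easily have a pole at $x$: the based acyclic complex $\OO \xrightarrow{\pi} \OO$ already has torsion $\pi^{\pm 1}$. Integrality of the boundary maps controls the numerator, not the denominator. Finally, the cancellation issue you raise at the end is the crux of the matter, and you correctly identify that resolving it requires substantial additional input (e.g.\ a precise understanding of the $\OO_{\Xhat,x}$-lattices in the twisted chain complex and some non-vanishing or non-degeneracy statement); but note that at the ideal point $u=-2$ of $m037$ the paper finds $\tkx$ vanishes identically, so any completed argument must be delicate enough to accommodate that degenerate case while still ruling out poles.
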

Unfortunately, Conjecture~\ref{conj:ideal} does not seem particularly
promising as an attack on Conjecture~\ref{conj:char} for
distinguishing fibered versus nonfibered cases.  Moreover, in terms of
looking at such Seifert ideal points to show that $\tkx$ determines
the genus, the second ideal point $u=-2$ in Section~\ref{sec:m037}
where $\tkx$ vanishes is not a promising sign.

However, when trying to use an ideal point $x$ of $X_0$ which gives a
Seifert surface to understand $\tkx$, it may be wrong to focus on just
the value of $\tkx$ at $x$.  After all, there is no representation of
$\pi$ corresponding to $x$.  Rather, as in the construction of the
surface associated to $x$, perhaps one should view $x$ as giving a
valuation on $\C(R_0)$, where $R_0$ is a component of $R(M)$
surjecting onto $X_0$.  If we unwind the definition of the associated
surface and its properties, we are left with the following abstract
situation.  There is a field $\F$ with an additive valuation $v \maps
\F^\times \to \Z$ with a representation $\rho \maps \pi \to \SLF$ so
that for each $\gamma \in \pi$ we have $v\left(\tr(\gamma)\right) \geq
\abs{\phi(\gamma)}$ where $\phi \maps \pi \to \Z$ is the usual free
abelianization homomorphism.  This alone is not enough, because even
in the fibered case one always has such a setup by looking at an ideal
point of a component of $X(M)$ consisting of reducible
representations.  Thus it seems that the key to such an approach must
be to exploit the fact that since $X_0$ contains an irreducible
character there is a $\gamma \in \pi$ with $\phi(\gamma) = 0$ yet
$v\left(\tr(\gamma)\right)$ is arbitrarily large.   

\newpage  
{\RaggedRight
\bibliographystyle{nmd/math}
\bibliography{ahyp_knot} }
\end{document}